\documentclass[11pt]{article}
\usepackage[a4paper,margin=1in]{geometry}
\usepackage{amsmath,amssymb,amsthm,mathtools,bm,mathrsfs}
\usepackage{enumitem}
\usepackage{xcolor}
\usepackage{algorithm}
\usepackage{algpseudocode}
\usepackage[numbers,sort&compress]{natbib}
\usepackage[colorlinks=true,linkcolor=blue,citecolor=blue,urlcolor=blue]{hyperref}

\providecommand{\headers}[2]{}
\providecommand{\email}[1]{\href{mailto:#1}{#1}}
\newenvironment{keywords}
  {\par\smallskip\noindent\textbf{Keywords: }}
  {\par\smallskip}
\newenvironment{MSCcodes}
  {\par\smallskip\noindent\textbf{2020 Mathematics Subject Classification: }}
  {\par\smallskip}

\numberwithin{equation}{section}
\numberwithin{algorithm}{section}
\newcommand{\R}{\mathbb{R}}
\newcommand{\C}{\mathbb{C}}

\newcommand{\E}{\mathbb{E}}

\newcommand{\A}{\mathcal{A}}
\newcommand{\norm}[1]{\lVert #1\rVert}

\theoremstyle{plain}
\newtheorem{theorem}{Theorem}[section]
\newtheorem{proposition}[theorem]{Proposition}
\newtheorem{lemma}[theorem]{Lemma}
\newtheorem{corollary}[theorem]{Corollary}
\newtheorem{definition}{Definition}[section]
\newtheorem{remark}{Remark}[section]
\newtheorem{assumption}{Assumption}[section]

\newtheoremstyle{algstyle}{3pt}{3pt}{\normalfont}{}{\bfseries}{.}{.5em}{}
\theoremstyle{algstyle}
\theoremstyle{plain}

\headers{Parabolic equations in spectral Barron spaces}{S. Ji and X. Wang}
\title{Variable-Coefficient Parabolic Equations and Finite-Horizon Hamilton--Jacobi--Bellman Equations in Augmented Spectral Barron Spaces}
\author{
Shaolin Ji\thanks{
Zhongtai Securities Institute for Financial Studies,
Shandong University, Jinan, Shandong 250100, PR China
(\email{jsl@sdu.edu.cn}).
}
\and
Xianrui Wang\thanks{
Corresponding author.
Zhongtai Securities Institute for Financial Studies,
Shandong University, Jinan, Shandong 250100, PR China
(\email{202511852@mail.sdu.edu.cn}).
}
}

\begin{document}

\maketitle
\begin{abstract}
We establish a whole-space solution framework in augmented spectral Barron spaces for uniformly elliptic parabolic equations with state-dependent principal coefficients. A frozen-symbol parametrix yields bounded Green and terminal operators and a one-derivative smoothing estimate on arbitrary finite horizons, without requiring the spatial variation of the principal coefficient to be perturbatively small. We then apply this linear framework to a finite-horizon Hamilton--Jacobi--Bellman equation. A semi-explicit gradient iteration converges on sufficiently short horizons at a Gamma-factorial rate in a whole-space augmented Barron norm, and its limit is identified with the stochastic-control value function. Finally, temporal Jackson approximation and spatial spectral Barron approximation give joint shallow cosine-network approximations in space and time for the value function and the optimal feedback, with quantitative error and neuron-count bounds. Taken together, these results connect variable-coefficient parabolic well-posedness and smoothing in augmented spectral Barron spaces with nonlinear HJB solvability and quantitative neural-network approximation.

\end{abstract}
\begin{keywords}
Hamilton--Jacobi--Bellman equations; spectral Barron spaces; gradient iteration; parametrix; stochastic control; Jackson approximation; shallow neural networks.
\end{keywords}
\begin{MSCcodes}
35K10, 35K55, 49L12, 41A25, 68T07
\end{MSCcodes}

\section{Introduction}
High-dimensional parabolic partial differential equations arise naturally in stochastic control, quantitative finance, and diffusion models. Their numerical treatment is challenging because conventional mesh-based discretizations typically become increasingly expensive as the spatial dimension grows. This difficulty has motivated neural-network-based approaches to high-dimensional PDEs, including physics-informed neural networks \cite{raissi2019} and Deep BSDE methods \cite{han2018}. From an analytical perspective, however, a complementary question is fundamental: in which function spaces can solutions of high-dimensional PDEs be shown to admit quantitative neural-network approximations?

Barron spaces provide a natural framework for addressing this question. Barron's approximation theorem \cite{barron1993} shows that functions with suitable Fourier complexity admit shallow-network approximations with the characteristic \(N^{-1/2}\) rate, whose width-dependent exponent does not explicitly deteriorate with the ambient dimension. Spectral Barron spaces formulate this complexity through weighted Fourier-\(L^1\) norms and are therefore particularly well suited to PDE analysis. E and Wojtowytsch \cite{e2021barron} demonstrated how Barron regularity can be propagated through explicit solution representations for several whole-space PDEs. Subsequent spectral Barron solution theories include the static Schrödinger theory of Chen, Lu, Lu, and Zhou \cite{chen2023} and the static Hamilton--Jacobi--Bellman theory of Feng and Lu \cite{feng2026}. In these settings, the principal second-order operator retains a constant-coefficient Fourier structure. Motivated by this program, we study the finite-horizon HJB equation
\begin{equation}
\partial_t V(t;x)+\inf_{u}\left\{L^u V(t;x)+\ell(t;x)+u^{\top}Ru\right\}-\gamma V(t,x)=0,\qquad V(T;x)=\varphi(x),\label{eq:hjb}
\end{equation}
where the controlled second-order operator $L^u$ is given by
\begin{equation}
L^u v(t;x)=\bigl(f(t;x)+g(t;x)u\bigr)\cdot\nabla_x v(t;x)+\frac12\,\sigma(t;x)\sigma(t;x)^{\top}:D_x^2 v(t;x).\label{eq:controlled-operator}
\end{equation}
Writing \(a(t,x):=\sigma(t,x)\sigma(t,x)^\top\) is uniformly elliptic and genuinely depends on the state variable. The principal analytical difficulty is therefore already present at the linear level. If \(a\) is independent of \(x\), the principal parabolic operator is diagonalized by the spatial Fourier transform and can be treated frequency by frequency through an explicit multiplier. Once \(a=a(t,x)\), multiplication by the coefficient becomes convolution in Fourier space, so this diagonal structure is lost at the level of the principal second-order operator. The constant-coefficient Fourier mechanisms underlying the preceding spectral Barron theories therefore no longer apply directly.

Recent developments make the remaining difficulty more precise. Chen, Huang, Yang, and Zhou \cite{chen2026a} establish spectral Barron regularity for second-order elliptic equations with variable leading-order coefficients, but their argument is stationary and perturbative, requiring the variable part of the principal coefficient to be sufficiently small relative to a constant elliptic background. It therefore does not provide the time-dependent Green and smoothing theory needed for the parabolic problem considered here. Choi et al. \cite{choi2026} develop a finite-time theory for fractional parabolic equations in anisotropic space--time spectral Barron spaces, but the principal fractional heat operator remains translation invariant and the spatially dependent coefficients enter through lower-order drift and potential terms. Hence the principal operator can still be treated through its Fourier multiplier, whereas the state-dependent diffusion coefficient in \eqref{eq:controlled-operator} destroys this diagonal Fourier structure. Choulli, Lu, and Takase \cite{choulli2026heat} study heat equations in spectral Barron spaces and also treat variable-coefficient diffusion operators, but the variable principal part is handled perturbatively around a constant coefficient matrix.

Thus, the existing results do not directly furnish a whole-space spectral Barron solution framework for the time-dependent nondivergence operator
\[
\partial_t+\frac12 a(t,x):D_x^2
\]
when the principal coefficient depends genuinely on the state variable and its spatial variation is not assumed to be perturbatively small. This is the linear analytical problem that must be resolved before the Barron-space HJB analysis can be carried out.

Our first step is to establish a whole-space solution theory for the variable-coefficient linear parabolic equation in augmented spectral Barron spaces. The appropriate scale is
\[
\mathcal A^s=\mathbb C\oplus\mathcal B^s,
\]
whose constant component accommodates the nondegenerate elliptic background, while the \(\mathcal B^s\)-component describes the spatial variation. We show that, for uniformly elliptic state-dependent principal coefficients in this class, the linear terminal-value problem admits a unique solution
\[
V\in C([0,T];\mathcal A^{s+1})
\cap
C^1([0,T];\mathcal A^{s-1})
\]
on every finite horizon, together with a one-derivative smoothing estimate. This linear theory is subsequently used as the basic solution operator for the HJB iteration.
To obtain this result, we construct a frozen-symbol parametrix adapted to the augmented Barron structure. The main difficulty is to control the frozen Gaussian amplitude
\[
M_{t,r}(x;\xi)
=
\exp\left(
-\frac12\int_t^r\xi^\top a(\tau,x)\xi\,d\tau
\right)
\]
not only pointwise in \(x\), but in the augmented Barron norm. Using Wiener--Lévy inversion, GRS spectral invariance, and holomorphic functional calculus, we prove the uniform estimate
\[
\|M_{t,r}(\cdot;\xi)\|_{\mathcal A^{s+2}}
\le
C\exp\bigl(-c(r-t)|\xi|^2\bigr).
\]
The corresponding spatial derivative bounds imply that the parametrix defect satisfies
\[
\|R_{t,r}\|_{\mathcal B^s\to\mathcal B^s}
\le
C(r-t)^{-1/2}.
\]
Since this singularity is integrable, the defect can be inverted by a Volterra series on every finite horizon. The resulting Green and terminal operators provide the desired linear solution theory and smoothing estimate, without either a perturbative smallness condition on the spatial variation of the principal coefficient or a short-time restriction.

We next use this linear framework to treat the nonlinear HJB equation. After explicit minimization over the control, we employ a semi-explicit gradient iteration, following the gradient-iteration framework of Kerimkulov, Šiška, and Szpruch \cite{kerimkulov2020}. At each step, the full state-dependent second-order operator is retained, while the nonlinear Hamiltonian is evaluated at the preceding iterate. Consequently, every iteration step is a linear terminal-value problem with the same variable principal part and is therefore covered by the preceding Barron-space framework. The one-derivative smoothing estimate then propagates through the iteration and yields, on sufficiently short horizons, convergence in the whole-space augmented Barron norm. More precisely, the iterates and the associated feedbacks satisfy a Gamma-factorial estimate of the form

\[
\sup_{t\in[0,T]}
\|V^n(t)-V^*(t)\|_{\mathcal A^{s+1}}
+
\sup_{t\in[0,T]}
\|u^n(t)-u^*(t)\|_{\mathcal B^s}
\le
C\frac{\Lambda^n}{\Gamma(n/2+1)}.
\]

The limit is a bounded classical solution of the HJB equation. A verification argument further identifies it with the stochastic-control value function and establishes optimality of the associated feedback. The short-horizon condition enters only at this nonlinear stage, through the uniform control of the iterates in an augmented Barron ball.

The HJB analysis yields spectral Barron regularity in space and Banach-valued regularity in time, but not joint Fourier Barron regularity in \((t,x)\). We therefore approximate the two variables separately: a Banach-valued Jackson approximation gives a finite cosine expansion in time with spatial Barron coefficients, and each coefficient is then approximated by a shallow cosine network. Using the product-to-sum identity, these temporal and spatial approximations are combined into a single shallow network in \((t,x)\). This yields quantitative joint space--time approximations of the value function and, under an additional time-regularity assumption on \(g\), the optimal feedback, with the error decomposing into the HJB iteration, temporal Jackson, and spatial Barron approximation errors.

The main contributions of the paper are summarized as follows.

1. We develop a whole-space augmented spectral Barron framework for uniformly elliptic parabolic equations with time-dependent, state-dependent principal coefficients in nondivergence form. The resulting Green and terminal operators are valid on arbitrary finite horizons and satisfy a one-derivative smoothing estimate, without a perturbative smallness assumption on the spatial variation of the principal coefficient.

2. We apply the linear framework to a finite-horizon HJB equation and prove short-horizon convergence of a semi-explicit gradient iteration at a Gamma-factorial rate in whole-space augmented Barron norms. The limiting classical solution is identified with the stochastic-control value function, and the associated feedback is shown to be admissible and optimal.

3. We combine temporal Jackson approximation with spatial spectral Barron approximation to obtain quantitative shallow-network approximations of the value function and optimal feedback in the joint space--time variables, without imposing joint space--time spectral Barron regularity.

\section{Preliminaries and Main Results}
\subsection{Spectral and Augmented Barron Spaces}
\begin{definition}[Spectral and augmented Barron spaces]\label{def:barron}
Let $\mathcal{S}(\R^d)$ and $\mathcal{S}'(\R^d)$ denote the Schwartz space and tempered distributions, respectively, with vector- and matrix-valued spaces understood componentwise. For $s\in\R$, we follow \cite[Definition 1]{feng2026} and use the Fourier convention
\begin{equation}
\hat{h}(\xi)=\frac{1}{(2\pi)^d}\int_{\R^d}h(x)e^{-ix\cdot\xi}\,dx;\qquad h(x)=\int_{\R^d}\hat{h}(\xi)e^{ix\cdot\xi}\,d\xi,\label{eq:fourier-convention}
\end{equation}
first on Schwartz functions and then on $\mathcal{S}'$ by duality.
For $E=\C$, $\C^m$, or $\C^{m\times n}$, define
\begin{equation}
B^s(\R^d;E):=\Bigl\{h\in\mathcal{S}'(\R^d;E):\norm{h}_{B^s(\R^d;E)}:=\int_{\R^d}|\hat{h}(\xi)|\,(1+|\xi|)^s\,d\xi<\infty\Bigr\}.\label{eq:barron-space}
\end{equation}
Here $|\cdot|$ denotes the scalar modulus, the vector $\ell^1$-norm, or the entrywise matrix $\ell^1$-norm, and $|\xi|$ is Euclidean. For $s\ge 0$, define
\begin{equation}
A^s(\R^d;E):=\{c+h:c\in E,\ h\in B^s(\R^d;E)\},\label{eq:augmented-space}
\end{equation}
where
\begin{equation}
\norm{c+h}_{A^s}:=|c|+\norm{h}_{B^s}.\label{eq:augmented-norm}
\end{equation}
\end{definition}
\begin{remark}[Unique augmented decomposition]\label{rem:unique}
Here $|\cdot|$ in \eqref{eq:augmented-norm} is the finite-dimensional $\ell^1$ norm specified above. Since $B^s\subset B^0\subset C_0(\R^d;E)$ for $s\ge 0$, the space $B^s$ contains no nonzero constant functions. The decomposition is therefore unique: if $c+h=c'+h'$, then the constant function $c-c'=h'-h$ belongs to $B^s$ and hence must vanish. Therefore $c=c'$ and $h=h'$, so
\[
A^s(\R^d;E)=E\oplus B^s(\R^d;E)
\]
is an algebraic direct sum, and \eqref{eq:augmented-norm} and the constant component of an augmented Barron function are well defined.
\end{remark}
\subsection{Finite-Horizon Stochastic Control and the HJB Equation}
Fix once and for all a reference horizon $\bar T>0$. All coefficient data below are defined on $[0,\bar T]$, and every statement for a horizon $0<T\le\bar T$ is understood with these data restricted to $[0,T]$. Let
\[
(\Omega,\mathcal{F},(\mathcal{F}_r)_{0\le r\le T},\mathbb{P})
\]
be a complete filtered probability space satisfying the usual conditions, and let $W=(W_r)_{0\le r\le T}$ be a $d$-dimensional Brownian motion. Fix a symmetric positive-definite matrix $R=R^{\top}>0$.
\begin{definition}[Admissible controls]\label{def:admissible}
Fix $(t;x)\in[0,T]\times\R^d$. An $\R^m$-valued, $(\mathcal{F}_r)$-progressively measurable process $u=(u_r)_{r\in[t,T]}$ is admissible for $(t;x)$ if the state equation
\begin{equation}
\begin{cases}
dX_r^{t,x;u}=\bigl(f(r,X_r^{t,x;u})+g(r,X_r^{t,x;u})u_r\bigr)\,dr+\sigma(r,X_r^{t,x;u})\,dW_r,\\
X_t^{t,x;u}=x
\end{cases}\label{eq:controlled-sde}
\end{equation}
admits a unique strong solution and
\[
\E\Bigl[\sup_{t\le r\le T}|X_r^{t,x;u}|^2\Bigr]+\E\Bigl[\int_t^T|u_r|^2\,dr\Bigr]<\infty.
\]
In addition, the cost functional
\begin{equation}
J(t;x;u):=\E\Bigl[\int_t^Te^{-\gamma (r-t)}\bigl(\ell(r,X_r^{t,x;u})+u_r^{\top}Ru_r\bigr)\,dr+e^{-\gamma (T-t)}\varphi(X_T^{t,x;u})\Bigr]\label{eq:cost-functional}
\end{equation}
must be well defined as an extended real number and satisfy $J(t;x;u)>-\infty$. The corresponding class is denoted by $\mathcal{U}_{t,T}(x)$.
\end{definition}
The value function is
\begin{equation}
V_{\mathrm{val}}(t;x):=\inf_{u\in\mathcal{U}_{t,T}(x)}J(t;x;u).\label{eq:value-function}
\end{equation}
Writing
\(a(t;x):=\sigma(t;x)\sigma(t;x)^{\top}.\)

\begin{definition}[Control Hamiltonian]\label{def:hamiltonian}
The control Hamiltonian is
\begin{equation}
H(t;x;p;u):=\bigl(f(t;x)+g(t;x)u\bigr)\cdot p+\ell(t;x)+u^{\top}Ru.\label{eq:control-hamiltonian}
\end{equation}
Because $R$ is positive definite, the map $u\mapsto H(t;x;p;u)$ is strictly convex. Its first-order condition
\[
2Ru+g(t;x)^{\top}p=0
\]
therefore gives the unique pointwise minimizing selector
\begin{equation}
u^{*}(t;x;p):=-\frac12\,R^{-1}g(t;x)^{\top}p.\label{eq:optimal-control}
\end{equation}
At this stage $u^{*}$ denotes only the pointwise Hamiltonian minimizer; the admissibility and optimality of the induced closed-loop control are proved in Section 4.
\end{definition}
\begin{definition}[Reduced Hamiltonian]\label{def:reduced}
Set
\(
Q(t;x):=g(t;x)R^{-1}g(t;x)^{\top}
\)
and define the reduced Hamiltonian by
\begin{equation}
F(t;x;p;v):=\inf_{u\in\R^m}H(t;x;p;u)-\gamma v
=f(t;x)\cdot p+\ell(t;x)-\tfrac14p^\top Q(t;x)p-\gamma v.
\label{eq:reduced-hamiltonian}
\end{equation}
For a function $V$, we write $F(t;x;\nabla_xV(t;x),V(t;x))$.
\begin{equation}
\partial_t V(t;x)+\frac12\,a(t;x):D_x^2 V(t;x)+F(t;x;\nabla_xV(t;x),V(t;x))=0,\qquad V(T;x)=\varphi(x).\label{eq:reduced-hjb}
\end{equation}
\end{definition}
\subsubsection{Standing assumptions}
For a Banach space $X$, the spaces $C([0,T];X)$ and $C^1([0,T];X)$
have their usual meanings, and
$\|v\|_{C([0,T];X)}=\sup_{t\in[0,T]}\|v(t)\|_X$.
Derivatives at time endpoints are understood one-sidedly.

\begin{assumption}[Standing assumptions]\label{ass:standing}
Let $s>2$. We assume
\[
f\in C\bigl([0,\bar T];A^s(\R^d;\R^d)\bigr),\qquad g\in C\bigl([0,\bar T];A^s(\R^d;\R^{d\times m})\bigr),
\]
\[
\ell\in C\bigl([0,\bar T];A^s(\R^d)\bigr),\qquad \varphi\in A^{s+1}(\R^d),\qquad R=R^{\top}>0,\qquad \gamma\in\mathbb{R}.
\]
The diffusion coefficient satisfies
\(\sigma\in C\bigl([0,\bar T];A^{s+2}(\R^d;\R^{d\times d})\bigr).\)
The matrix $a=\sigma\sigma^\top$ is uniformly elliptic: there is $\lambda>0$ such that
\begin{equation}
\xi^{\top}a(t;x)\xi\ge\lambda|\xi|^2,\qquad (t;x;\xi)\in[0,\bar T]\times\R^d\times\R^d.\label{eq:ellipticity}
\end{equation}
\end{assumption}

\begin{remark}[Large spatial variation]
\label{rem}
Let \(h(x)=e^{-|x|^2}\), so that \(h\in B^r(\mathbb R^d)\) for every \(r\ge0\). For any \(\rho\ge0\), set
\( \sigma_\rho(x)=(1+\rho h(x))I_d. \)
Then \(\sigma_\rho\in A^{s+2}\) and
\[ a_\rho(x)=\sigma_\rho(x)\sigma_\rho(x)^\top =(1+\rho h(x))^2I_d\ge I_d. \]
Meanwhile,
\[ \|\sigma_\rho-I_d\|_{A^{s+2}}\to\infty \qquad\text{as }\rho\to\infty. \]
Thus the standing assumptions allow arbitrarily large spatial variations within the augmented Barron class while preserving uniform ellipticity.

\end{remark}

\subsection{Semi-explicit gradient iteration}
The iteration below keeps the principal operator fixed and evaluates the
gradient nonlinearity explicitly at the previous iterate.
\begin{algorithm}[H]
\caption{Semi-explicit gradient iteration}
\label{alg:gradient-iteration}

\textbf{Step 1.} Initialize
\[
V^0=0,\qquad u^0=0.
\]

\textbf{Step 2.} For $n\ge0$, compute $V^{n+1}$ as the solution of
\[
\begin{cases}
\displaystyle
\partial_tV^{n+1}
+\frac12a:D_x^2V^{n+1}
+F(t,x,\nabla_xV^n,V^n)=0,
\\[1mm]
V^{n+1}(T)=\varphi .
\end{cases}
\]

\textbf{Step 3.} Update the feedback control by
\[
u^{n+1}
=
-\frac12R^{-1}g^\top\nabla_xV^{n+1}.
\]

\end{algorithm}
The principal operator is fixed; the gradient nonlinearity is explicit.

\subsection{Main Results}
\begin{theorem}[Linear well-posedness on arbitrary finite horizons]
\label{thm:linear-wellposedness}
Let $s>2$, $a=a^\top\in C([0,\bar T];A^{s+2})$, and $a(t,x)\ge\lambda I$
for some $\lambda>0$. For every $0<T\le\bar T$,
$H\in C([0,T];A^s)$, and $\varphi\in A^{s+1}$, the problem
\begin{equation}
\partial_tV+\tfrac12a:D_x^2V+H=0,\qquad V(T)=\varphi
\label{eq:linear-main-pde}
\end{equation}
has a unique solution
$V\in C([0,T];A^{s+1})\cap C^1([0,T];A^{s-1})$.
The operators constructed in Section~3 give
\begin{equation}
V(t)=U_A(t;T)\varphi+(G_AH)(t),
\label{eq:augmented-representation}
\end{equation}
and
\begin{equation}
\|V(t)\|_{A^{s+1}}
\le C_{\mathrm{ter},A}\|\varphi\|_{A^{s+1}}
+K_{G,A}\int_t^T(r-t)^{-1/2}\|H(r)\|_{A^s}\,dr.
\label{eq:augmented-bound}
\end{equation}
For equal terminal data,
\begin{equation}
\|V_1(t)-V_2(t)\|_{A^{s+1}}
\le K_{G,A}\int_t^T(r-t)^{-1/2}\|H_1(r)-H_2(r)\|_{A^s}\,dr.
\label{eq:augmented-difference-bound}
\end{equation}
The constants\(K_{G,A}\) and \(C_{ter,A}\) are uniform for $0<T\le\bar T$; they may depend on the
fixed coefficient family, its ellipticity, and $\bar T$.
\end{theorem}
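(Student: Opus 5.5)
We will prove the theorem by constructing the solution operators via a frozen-symbol parametrix and a Volterra series, as outlined in the introduction.

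\textbf{Step 1: frozen-coefficient kernel.} For $0\le t\le r\le T$ and a function $h$, set
\[
(P_{t,r}h)(x)=\int_{\R^d}M_{t,r}(x;\xi)\,\hat h(\xi)\,e^{ix\cdot\xi}\,d\xi,
\qquad
M_{t,r}(x;\xi)=\exp\Bigl(-\tfrac12\int_t^r\xi^\top a(\tau,x)\xi\,d\tau\Bigr).
\]
Then $P_{t,r}$ is the backward evolution of $\partial_t+\frac12a(\cdot,x):D^2$ with the coefficient frozen at the output point $x$.

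The key estimate is
\[
\|M_{t,r}(\cdot;\xi)\|_{A^{s+2}}\le Ce^{-c(r-t)|\xi|^2},
\]
together with companion bounds for $\nabla_xM_{t,r}$. Because $A^{s+2}$ is a Banach algebra, the $B^{s+2}$-part of $x\mapsto\frac12\int_t^r\xi^\top a\xi\,d\tau$ has norm $O((r-t)|\xi|^2)$, and naive exponentiation would lose the Gaussian decay. We therefore factor
\[
M=e^{-\frac12(r-t)\xi^\top\bar a\,\xi}\cdot\exp\bigl(-z\,\phi\bigr),
\]
where $\bar a$ is the averaged constant component and $\phi\in B^{s+2}$ is the normalized variable part. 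Uniform ellipticity means the Gelfand spectrum of the exponent lies in a sector bounded away from zero. This lets us control the exponential in the $A^{s+2}$ norm with decay, using holomorphic functional calculus together with the GRS/Wiener--L\'evy spectral invariance assumed in the paper (the spectrum of an element of $A^{s}$ equals its closed range plus the constant). This step is the main obstacle: the constants must stay uniform in $\xi$ and in $r-t$. We will try a Cauchy-integral representation over a contour enclosing the range, with resolvent bounds that are uniform by spectral invariance.

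From this estimate we obtain the mapping bounds
\[
\|P_{t,r}\|_{A^{s+1}\to A^{s+1}}\le C,
\qquad
\|P_{t,r}\|_{B^s\to B^{s+1}}\le C(r-t)^{-1/2},
\]
the latter because $(1+|\xi|)e^{-c(r-t)|\xi|^2}\lesssim(r-t)^{-1/2}$. The constant component of the data is simply preserved by $P$.

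\textbf{Step 2: parametrix defect.} Applying $\partial_t+\frac12a(t,x):D^2$ to $P_{t,r}h$ gives $-R_{t,r}h$. The defect $R_{t,r}$ collects the terms in which derivatives fall on $M$ (via $\nabla_x M$), together with the mismatch between $a(t,x)$ and $\partial_t$ of the exponent. Each such term carries one factor $(r-t)|\xi|^2\cdot|\xi|^{-1}$-type decay, which yields
\[
\|R_{t,r}\|_{B^s\to B^s}\le C(r-t)^{-1/2}.
\]

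\textbf{Step 3: Volterra series.} We look for the solution in the form
\[
G_AH(t)=\int_t^TP_{t,r}\Phi(r)\,dr,
\qquad
\Phi=H+\mathcal K\Phi,
\qquad
(\mathcal K\Phi)(r)=\int_r^TR_{r,\rho}\Phi(\rho)\,d\rho,
\]
and treat the terminal part $U_A(t;T)\varphi$ analogously. Because the kernel singularity is $(r-t)^{-1/2}$, the iterated kernels are bounded by
\[
\frac{C^n\Gamma(1/2)^n}{\Gamma(n/2)}(r-t)^{n/2-1}.
\]
The Neumann series therefore converges on every $[0,\bar T]$, which gives bounded $G_A$ and $U_A$ with constants uniform in $T\le\bar T$. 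Estimate \eqref{eq:augmented-bound} then follows from Step 1 combined with the resolvent-kernel bound, and \eqref{eq:augmented-difference-bound} follows by linearity.

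\textbf{Step 4: regularity and uniqueness.} Continuity in $t$ follows from strong continuity of $P_{t,r}$, proved by dominated convergence on the Fourier side. For the $C^1([0,T];A^{s-1})$ statement, we read off $\partial_tV=-\frac12a:D^2V-H$; this lies in $A^{s-1}$ because $a\in A^{s+2}$ acts by multiplication and $D^2V\in B^{s-1}$. For uniqueness, let $W$ solve the homogeneous problem in the stated class. Since $B^{s-1}$ embeds in bounded $C^2$ functions when $s>2$, $W$ is a bounded classical solution, and the classical maximum principle (or a Feynman--Kac representation for the uniformly elliptic, bounded, Lipschitz coefficient $a$) gives $W\equiv0$.
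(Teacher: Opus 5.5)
Your plan follows the paper's route essentially step for step:
\begin{enumerate}
\item the same frozen amplitude $M_{t,r}$, written as $e^{-q\vartheta}$ with $q=(r-t)|\xi|^2$;
\item a Cauchy-integral bound resting on Wiener--L\'evy and GRS;
\item the defect bound $\|\mathcal R_{t,r}\|_{B^s\to B^s}\lesssim(r-t)^{-1/2}$;
\item the Volterra series $\mathcal P(I+\Sigma)^{-1}$ with Beta-function kernels;
\item a terminal corrector, separate treatment of constants, and uniqueness by a whole-space maximum principle.
\end{enumerate}
The step that does not go through as written is the one you flag as the main obstacle. You assert that the resolvent bounds are ``uniform by spectral invariance.'' Spectral invariance is only qualitative. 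It shows that for each fixed $\vartheta=\vartheta_{t,r,\omega}$ and each $z$ on a contour $\mathcal C$ around $[\lambda/2,\Lambda_a/2]$, $z-\vartheta$ is invertible in $A^{s+2}$. It gives no bound on $\|(z-\vartheta)^{-1}\|_{A^{s+2}}$ in terms of the spectrum or of $\|\vartheta\|_{A^{s+2}}$. Wiener-type algebras do not, in general, admit a bound on the inverse in terms of the norm and the distance to the spectrum. So a norm-bounded family whose spectra all lie in $[\lambda/2,\Lambda_a/2]$ may still have unbounded resolvents on $\mathcal C$. If uniformity fails, $C_M$ depends on $(t,r,\omega)$, hence on $\xi$, and the symbol lemma, the smoothing bound and the defect bound all fail.

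The missing idea is compactness. Show that $(t,r,\omega)\mapsto\vartheta_{t,r,\omega}$, extended to the diagonal by $\tfrac12\omega^\top a(t)\omega$, is continuous from the compact set $\{0\le t\le r\le\bar T\}\times S^{d-1}$ into $A^{s+2}$. At $r=t$ this follows from $\bigl\|\frac{1}{r-t}\int_t^r a(\tau)\,d\tau-a(t)\bigr\|_{A^{s+2}}\le\sup_{\tau\in[t,r]}\|a(\tau)-a(t)\|_{A^{s+2}}$. Its image is then compact in $A^{s+2}$. The map $(\vartheta,z)\mapsto(z-\vartheta)^{-1}$ is continuous on the product of this image with $\mathcal C$, by continuity of inversion on the group of invertibles, hence bounded there. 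Uniformity in $q$ then comes for free from the factor $e^{-qz}$ with $\operatorname{Re}z\ge\lambda/4$. Splitting off the averaged constant $\bar a$ is harmless but buys nothing, since it merely shifts the contour.

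Smaller points:
\begin{itemize}
\item With your convention $(\partial_t+L_t)P_{t,r}h=-\mathcal R_{t,r}h$, differentiating $\int_t^TP_{t,r}\Phi(r)\,dr$ gives $\Phi+\mathcal K\Phi=H$, not $\Phi=H+\mathcal K\Phi$.
\item The boundary term $\Phi(t)$ in that differentiation requires $P_{t,t+\varepsilon}\to I$ strongly and uniformly in $t$. The paper obtains the identity by truncating to $r\ge t+\varepsilon$ and passing to the limit.
\item ``Analogously'' for $U_A$ should be made precise, as in the paper: take $P_{t,T}\varphi$ and subtract $G$ applied to its defect. The hypothesis $\varphi\in B^{s+1}$ is what absorbs the extra factor $|\xi|$ in the defect symbol, so that the defect lies in $C([0,T];B^s)$.
\item A homogeneous solution $W$ is classical because $W(t)\in A^{s+1}$ gives $D_x^2W(t)\in B^{s-1}\subset C_0$. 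By itself, $B^{s-1}$ is only guaranteed to embed in $C^1$ when $2<s<3$.
\end{itemize}
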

\begin{theorem}[Solvability and Gamma-factorial convergence in augmented spectral Barron spaces]\label{thm:solvability}
Fix the reference horizon $\bar T>0$ of Assumption~\ref{ass:standing}. Then there exists $T_0\in(0,\bar T]$, depending only on the data in Assumption~\ref{ass:standing} and the reference horizon $\bar T$, such that, for every $0<T\le T_0$, with the coefficient data of Assumption~\ref{ass:standing} restricted from $[0,\bar T]$ to $[0,T]$, the reduced HJB equation \eqref{eq:reduced-hjb} admits a unique classical solution
\[
V^{*}\in C^1\bigl([0,T];A^{s-1}(\R^d)\bigr)\cap C\bigl([0,T];A^{s+1}(\R^d)\bigr).
\]
The stochastic-control value function $V_{\mathrm{val}}$ is finite and $V_{\mathrm{val}}(t;x)=V^{*}(t;x)$.
Define the optimal feedback by
\begin{equation}
u^{*}(t;x):=-\frac12\,R^{-1}g(t;x)^{\top}\nabla_x V^{*}(t;x).\label{eq:optimal-feedback}
\end{equation}
The iterates of Algorithm~\ref{alg:gradient-iteration} converge in
$C([0,T];A^{s+1})$. Moreover, there exist constants
\[
C=C(f,g,\ell,\varphi,R,\sigma,\bar T,\gamma)>0,
\qquad
\Lambda=\Lambda(f,g,\ell,\varphi,R,\sigma,\bar T,\gamma)>0,
\]
independent of $n$, such that, for every $n\ge0$,
\begin{equation}
\sup_{t\in[0,T]}
\|V^n(t)-V^{*}(t)\|_{A^{s+1}}
+
\sup_{t\in[0,T]}
\|u^n(t)-u^{*}(t)\|_{B^s}
\le
C\,
\frac{\Lambda^n}{\Gamma(n/2+1)}.
\label{eq:main-convergence}
\end{equation}
where $\Gamma$ denotes the Euler Gamma function. Furthermore, $u^{*}$ is admissible and optimal.
\end{theorem}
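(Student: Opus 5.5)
The plan has four stages: a priori bounds for the iterates, contraction of their differences, passage to the limit, and verification. Throughout, Theorem~\ref{thm:linear-wellposedness} is applied to each step of Algorithm~\ref{alg:gradient-iteration} with $H=H^n:=F(\cdot,\cdot,\nabla_xV^n,V^n)$.

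\textbf{Step 1 (uniform bounds).} First I check that $H^n\in C([0,T];A^s)$ whenever $V^n\in C([0,T];A^{s+1})$. This uses that $A^s$ is a Banach algebra for $s\ge0$: the estimate $\|uv\|_{A^s}\le C\|u\|_{A^s}\|v\|_{A^s}$ follows from Fourier convolution and $(1+|\xi|)^s\le 2^s\bigl((1+|\xi-\eta|)^s+(1+|\eta|)^s\bigr)$, with constants handled componentwise. It also uses that $\partial_j:A^{s+1}\to B^s$ is bounded with norm at most one. Hence $Q=gR^{-1}g^\top\in A^s$, and
\[
\|H^n(r)\|_{A^s}\le C_0\bigl(1+\|V^n(r)\|_{A^{s+1}}+\|V^n(r)\|_{A^{s+1}}^2\bigr).
\]
Set $M:=2C_{\mathrm{ter},A}\|\varphi\|_{A^{s+1}}+1$. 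By \eqref{eq:augmented-bound}, if $\sup_t\|V^n(t)\|_{A^{s+1}}\le M$, then
\[
\sup_t\|V^{n+1}(t)\|_{A^{s+1}}\le C_{\mathrm{ter},A}\|\varphi\|_{A^{s+1}}+2K_{G,A}\sqrt{T}\,C_0(1+M+M^2)\le M,
\]
provided $T\le T_0$ is small. This is the only place the short horizon enters. By induction, all iterates lie in this ball, and $V^n\in C^1([0,T];A^{s-1})$.

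\textbf{Step 2 (Gamma-factorial contraction).} Let $e_n(t):=\|V^{n+1}(t)-V^n(t)\|_{A^{s+1}}$. The key identity is
\[
p_1^\top Qp_1-p_2^\top Qp_2=(p_1-p_2)^\top Q(p_1+p_2).
\]
Together with the algebra property and the ball bound, it gives $\|H^n-H^{n-1}\|_{A^s}\le L\,e_{n-1}$ with $L=L(M)$. Then \eqref{eq:augmented-difference-bound} yields
\[
e_n(t)\le K_{G,A}L\int_t^T(r-t)^{-1/2}e_{n-1}(r)\,dr.
\]
Iterating this Abel-type inequality with the Beta-function identity
\[
\int_t^T(r-t)^{-1/2}(T-r)^{k/2}\,dr=\frac{\Gamma(1/2)\,\Gamma(k/2+1)}{\Gamma(k/2+3/2)}\,(T-t)^{(k+1)/2}
\]
gives
\[
e_n(t)\le e_0^*\,\frac{(K_{G,A}L\sqrt\pi)^n(T-t)^{n/2}}{\Gamma(n/2+1)},\qquad e_0^*:=\sup_t e_0(t).
\]
Since the series $\sum_n\Lambda^n/\Gamma(n/2+1)$ converges, $(V^n)$ is Cauchy in $C([0,T];A^{s+1})$. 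Summing the tail $\sum_{k\ge n}$ and using $\Gamma(k/2+1)\ge c\,\Gamma(n/2+1)\,\Gamma((k-n)/2+1)$ (up to a polynomial factor absorbed by enlarging $\Lambda$) gives \eqref{eq:main-convergence} for $V$. The feedback bound follows from
\[
\|u^n-u^*\|_{B^s}\le C\|g\|_{A^s}\|\nabla(V^n-V^*)\|_{B^s}.
\]
This uses the product estimate of an $A^s$ function with a $B^s$ function, which stays in $B^s$.

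\textbf{Step 3 (limit and uniqueness).} Since $H^n\to H^*:=F(\cdot,\nabla V^*,V^*)$ in $C([0,T];A^s)$, the representation \eqref{eq:augmented-representation} passes to the limit. Uniqueness in Theorem~\ref{thm:linear-wellposedness} then shows that $V^*$ solves \eqref{eq:reduced-hjb} in $C([0,T];A^{s+1})\cap C^1([0,T];A^{s-1})$. Because $s-1>1$, we have $A^{s-1}\subset C_b^1$ and $A^{s+1}\subset C_b^2$ pointwise, so $V^*$ is a bounded classical solution. For uniqueness, take two solutions in the class and apply \eqref{eq:augmented-difference-bound} with $L$ computed at the larger of their norms. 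This gives $e(t)\le C\int_t^T(r-t)^{-1/2}e(r)\,dr$, and the same iteration (a singular Gronwall argument) forces $e\equiv0$. No smallness of $T$ is needed here.

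\textbf{Step 4 (verification).} This step is the main obstacle. The closed-loop coefficient $f+gu^*$ is bounded and Lipschitz in $x$, since $u^*\in C([0,T];B^s)\subset C_b^1$. Hence the closed-loop SDE has a unique strong solution with finite second moments, $u^*$ is bounded, and $u^*$ is admissible.

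For an arbitrary admissible $u$, apply Itô's formula to $e^{-\gamma(r-t)}V^*(r,X_r)$. $V^*$ is only $C^1$ in time with values in $A^{s-1}$, but this gives joint continuity of $\partial_tV^*$ and $D_x^2V^*$, which is enough. The stochastic integral is a true martingale because $\nabla V^*$ and $\sigma$ are bounded, so the integrand is square-integrable. The pointwise inequality $H(t,x,p,u)\ge\inf_vH(t,x,p,v)$ then gives $J(t;x;u)\ge V^*(t,x)$, with equality for $u^*$. The delicate points are the following:
\begin{itemize}
\item $J$ must be well defined: $\ell$ and $\varphi$ are bounded and $u^\top Ru\ge0$, so $J>-\infty$.
\item Itô's formula must be justified for time-$C^1$ Banach-valued $V^*$.
\end{itemize}
Once both are settled, $V_{\mathrm{val}}=V^*$ is finite and $u^*$ is optimal.
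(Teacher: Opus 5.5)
Your proposal is correct. It follows the paper's skeleton for existence and convergence: a short-horizon invariant $A^{s+1}$-ball, the weakly singular Volterra inequality iterated with the Beta identity, the feedback bound from the module estimate, and verification by It\^o's formula together with pointwise minimization of the Hamiltonian. It departs from the paper in three places.

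\emph{Identifying the limit.} You pass to the limit in the Duhamel representation $V^{n+1}=U_A(\cdot;T)\varphi+G_AH^n$ using the operator bound for $G_A$. Then $V^*=U_A\varphi+G_AH^*$ inherits its regularity and the equation directly from Theorem~\ref{thm:linear-wellposedness}. Strictly, it is the existence/representation part of that theorem that does the work here, not its uniqueness part. The paper instead passes to the limit in the integral identity $V^{n+1}(t)=\varphi+\int_t^TG_n(r)\,dr$ in $A^{s-1}$ and applies the Banach-valued fundamental theorem of calculus.

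\emph{Uniqueness.} This is the substantive difference. You write any competitor $W$ in the class as $U_A\varphi+G_A[F(\cdot;\nabla W;W)]$, via linear uniqueness, and close a singular Gronwall inequality in $A^{s+1}$, with the Lipschitz constant taken at $\max\{\sup_t\|W(t)\|_{A^{s+1}},\sup_t\|V^*(t)\|_{A^{s+1}}\}$. The paper instead linearizes. It writes $Z=W-V^*$ as a solution of a linear equation with drift $b=f-\tfrac14Q(\nabla W+\nabla V^*)$, removes the discount by the factor $e^{\gamma(T-t)}$, and applies the maximum principle of Lemma~\ref{lem:uniqueness}. Your route reuses the same Volterra mechanism as the convergence proof and stays inside the Barron framework. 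However, it genuinely needs $W\in C([0,T];A^{s+1})$, and it still rests on the maximum principle through the linear uniqueness in Theorem~\ref{thm:linear-wellposedness}. The paper's route uses only that $W$ is a bounded $C^{1,2}$ solution with bounded continuous gradient, so it extends to a larger uniqueness class.

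\emph{Tail summation.} Your inequality $\Gamma(k/2+1)\ge\Gamma(n/2+1)\,\Gamma((k-n)/2+1)$ holds with constant one, by convexity of $\log\Gamma(\cdot+1)$ together with $\log\Gamma(1)=0$. So the hedge about a polynomial factor is unnecessary. The resulting constant is $\sum_{j\ge0}\theta^j/\Gamma(j/2+1)$, in place of the paper's parity-split factor $(1+2\theta/\sqrt\pi)\,e^{\theta^2}$.
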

A $Q$-neuron shallow cosine network on $[0,T]\times\R^d$ is a function of the form
\[
\bar v(t;x)=\sum_{q=1}^{Q}c_q\cos(\alpha_q t+\beta_q\cdot x+\delta_q),
\]
with $c_q\in\R$ for scalar output and $c_q\in\R^m$ for vector output, where $\alpha_q,\delta_q\in\R$ and $\beta_q\in\R^d$.

\begin{theorem}[Joint space--time shallow neural-network approximation of the value function and optimal feedback]\label{thm:approx}
Let Assumption~\ref{ass:standing} hold and let \(0<T\le T_0\). Let \(V^{*}\), \(u^{*}\), and \(\{(V^n,u^n)\}_{n\ge0}\) be as in Theorem~\ref{thm:solvability}.
For every compact \(K\subset\R^d\), every \(n\ge0\), and every pair of integers \(N_t,N_x\ge1\), there exists a scalar-valued shallow cosine network \(\bar V^{n,N_t,N_x}\) satisfying \eqref{eq:value-network}, with
\(
Q_V\le(2N_t+1)(N_x+1).
\)
\begin{equation}
\begin{aligned}
\bar V^{n,N_t,N_x}(t;x)={}&c^n_{0,0}+\sum_{j=1}^{N_x}c^n_{0,j}\cos\bigl(\beta^n_{0,j}\cdot x+\delta^n_{0,j}\bigr)+\sum_{k=1}^{N_t}c^n_{k,0}\cos\Bigl(\frac{k\pi t}{T}\Bigr)\\
&+\sum_{k=1}^{N_t}\sum_{j=1}^{N_x}\sum_{l\in\{-1,1\}}c^n_{k,j,l}\cos\Bigl(\frac{k\pi t}{T}+l\bigl(\beta^n_{k,j}\cdot x+\delta^n_{k,j}\bigr)\Bigr),
\end{aligned}\label{eq:value-network}
\end{equation}
Moreover, there exist constants \(C_V>0\) and \(\Lambda>0\), independent of \(K,n,N_t,N_x\), such that \eqref{eq:value-error} holds.
\begin{equation}
\begin{aligned}
\sup_{t\in[0,T]}
\norm{V^{*}(t;\cdot)-\bar V^{n,N_t,N_x}(t;\cdot)}_{L^2(K)}
\le{}&
|K|^{1/2} C_V
\Bigl[
\frac{\Lambda^n}{\Gamma(n/2+1)}
\\
&
+N_t^{-1}
+(1+\log(N_t+1))N_x^{-1/2}
\Bigr].
\label{eq:value-error}
\end{aligned}
\end{equation}
If, in addition,
\begin{equation}
g\in C^1\bigl([0,\bar T];A^s(\R^d;\R^{d\times m})\bigr),
\label{eq:g-regularity}
\end{equation}
then there also exists an \(\R^m\)-valued shallow cosine network
\(\bar u^{n,N_t,N_x}\) satisfying \eqref{eq:feedback-network}, with
\(
Q_u\le(2N_t+1)N_x,
\)
\begin{equation}
\begin{aligned}
\bar u^{n,N_t,N_x}(t;x)={}&\sum_{j=1}^{N_x}d^n_{0,j}\cos\bigl(\eta^n_{0,j}\cdot x+\theta^n_{0,j}\bigr)\\
&+\sum_{k=1}^{N_t}\sum_{j=1}^{N_x}\sum_{l\in\{-1,1\}}d^n_{k,j,l}\cos\Bigl(\frac{k\pi t}{T}+l\bigl(\eta^n_{k,j}\cdot x+\theta^n_{k,j}\bigr)\Bigr),
\end{aligned}\label{eq:feedback-network}
\end{equation}

and there exists \(C_u^{\mathrm{app}}>0\), independent of \(K,n,N_t,N_x\), such that \eqref{eq:feedback-error} holds.

\begin{equation}
\begin{aligned}
\sup_{t\in[0,T]}
\norm{u^{*}(t;\cdot)-\bar u^{n,N_t,N_x}(t;\cdot)}
_{L^2(K;\R^m)}
\le{}&
|K|^{1/2} C_u^{\mathrm{app}}
\Bigl[
\frac{\Lambda^n}{\Gamma(n/2+1)}
\\
&
+N_t^{-1}
+(1+\log(N_t+1))N_x^{-1/2}
\Bigr].
\label{eq:feedback-error}
\end{aligned}
\end{equation}
For $n\ge1$, the same value network also satisfies
\[
\|\varphi-\bar V^{n,N_t,N_x}(T)\|_{L^2(K)}
\le |K|^{1/2}C_{V}\bigl(N_t^{-1}+(1+\log(N_t+1))N_x^{-1/2}\bigr).
\]
\end{theorem}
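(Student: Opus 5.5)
The plan is to split the error into three pieces: the iteration error, a temporal Jackson error, and a spatial Barron error. Write
\[
V^*-\bar V^{n,N_t,N_x}=(V^*-V^n)+(V^n-J_{N_t}V^n)+(J_{N_t}V^n-\bar V^{n,N_t,N_x}),
\]
where $J_{N_t}$ is a Banach-valued Jackson operator in time. Concretely, I would take $J_{N_t}$ to be a Jackson--Fejér-type kernel operator applied to the even extension of $t\mapsto V^n(t)$ in $t\pi/T$, so that
\[
(J_{N_t}V^n)(t)=c_0+\sum_{k=1}^{N_t}b_k\cos(k\pi t/T),
\]
with coefficients $b_k\in A^{s+1}$ obtained by integrating $V^n$ against cosines. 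The three terms are handled as follows.

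\textbf{Iteration term.} Since $B^{s+1}\subset B^0\subset C_0$ with $\|h\|_\infty\le\|h\|_{B^0}$, the $L^2(K)$ norm is at most $|K|^{1/2}$ times the $A^{s+1}$ norm. Theorem~\ref{thm:solvability} then gives the $\Lambda^n/\Gamma(n/2+1)$ contribution.

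\textbf{Temporal term.} By Theorem~\ref{thm:linear-wellposedness}, each iterate satisfies $V^n\in C^1([0,T];A^{s-1})$, and $\|\partial_tV^n\|_{A^{s-1}}$ is bounded uniformly in $n$. This uniformity should follow from the uniform Barron-ball bound on the iterates established inside the proof of Theorem~\ref{thm:solvability}, together with the PDE $\partial_tV^n=-\tfrac12a:D^2V^n-F(\cdot,\nabla V^{n-1},V^{n-1})$ and the algebra property of $A^{s-1}$. The even extension of a Lipschitz function is Lipschitz and periodic, so the Banach-valued Jackson theorem (the scalar proof transfers verbatim through Bochner integrals) gives
\[
\sup_t\|V^n(t)-J_{N_t}V^n(t)\|_{A^{s-1}}\le CN_t^{-1}.
\]
Note that $A^{s-1}\subset A^0\hookrightarrow L^\infty$, so losing two derivatives costs nothing at the $L^2(K)$ level.

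\textbf{Spatial term.} Each coefficient satisfies $\|b_k\|_{A^{s+1}}\le C\sup_t\|V^n(t)\|_{A^{s+1}}\cdot\lambda_k$, where the $\lambda_k$ are the kernel weights. The natural choice is de la Vallée Poussin or Fejér weights, whose Lebesgue-type constant $\sum_k|\lambda_k|$ grows like $1+\log(N_t+1)$; this is the source of the logarithm in the estimate.

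For each $k$, write $b_k=c_k+h_k$ with $h_k\in B^{s+1}$. Since $\hat h_k$ is real-symmetric because $V^n$ is real, $h_k$ is a mixture $\int\cos(\xi\cdot x+\theta(\xi))\,\mu(d\xi)$ with total mass $\|h_k\|_{B^0}$. The Maurey/Barron empirical sampling argument in $L^2(K,dx/|K|)$ then yields $N_x$ cosines with error at most $\|h_k\|_{B^0}N_x^{-1/2}$, using sup-norm bounded features. I would use a single shared $N_x$ per $k$ rather than distributing neurons across $k$. Summing over $k$ with the weights gives
\[
|K|^{1/2}C(1+\log(N_t+1))N_x^{-1/2}.
\]

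\textbf{Assembling the network.} The product-to-sum identity
\[
\cos(k\pi t/T)\cos(\beta\cdot x+\delta)=\tfrac12\sum_{l=\pm1}\cos(k\pi t/T+l(\beta\cdot x+\delta))
\]
produces exactly the form \eqref{eq:value-network}. The constants $c_k$ contribute the pure-time neurons, and the $k=0$ term contributes $c_{0,0}$ and the $N_x$ spatial neurons. This gives the count $Q_V\le(2N_t+1)(N_x+1)$.

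\textbf{Feedback.} The argument is the same applied to $u^n=-\tfrac12R^{-1}g^\top\nabla V^n\in C([0,T];B^s)$. For the time Lipschitz bound I need $\partial_tu^n\in B^{s-2}\subset B^0$, which uses \eqref{eq:g-regularity} together with $\partial_t\nabla V^n\in B^{s-2}$; this is valid since $s>2$. Because $u^n$ has no constant component, there are no constant or pure-time neurons, which gives $Q_u\le(2N_t+1)N_x$.

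\textbf{Terminal statement.} For $n\ge1$ we have $V^n(T)=\varphi$, so the iteration term vanishes and only the temporal and spatial pieces remain.

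\textbf{Main obstacle.} I expect the hardest step to be obtaining a time-regularity bound on $V^n$ (and $u^n$) that is uniform in $n$, combined with getting the exact $(1+\log)$ factor. A naive coefficient sum $\sum_k\|b_k\|$ does not close, so the Jackson operator must be chosen as a kernel with logarithmically growing $\ell^1$ weight norm. My first attempt would be the Jackson kernel with $|\lambda_k|\lesssim 1/k$-type control, which I would check gives $\sum_{k\le N_t}\lambda_k\lesssim\log N_t$.
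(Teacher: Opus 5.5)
There is a genuine gap in your spatial step: the logarithm cannot come from the kernel weights. For every kernel family that converges to the identity (Fejér, de la Vallée Poussin, Jackson), the multipliers satisfy $\lambda_k\to1$ for each fixed $k$. So no such family can have $|\lambda_k|\le C/k$ uniformly in $N_t$. Moreover, $\sum_{k\le N_t}|\lambda_k|$ grows \emph{linearly} in $N_t$. For the Jackson kernel used in the paper, $\sum_{k\in\mathbb Z}\mu_{k,m}=\frac{3m^3}{2m^2+1}$ with $m=\lfloor N_t/2\rfloor+1$. The $\log N$ growth you have in mind is that of the $L^1$ norm of the Dirichlet kernel, which is a different quantity. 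With only $\|b_k\|_{A^{s+1}}\le 2M\lambda_k$, summing the per-mode Barron errors gives a spatial error of order $|K|^{1/2}MN_tN_x^{-1/2}$, not $(1+\log(N_t+1))N_x^{-1/2}$. Your proposed check ``$\sum_{k\le N_t}\lambda_k\lesssim\log N_t$'' would fail.

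The missing idea, which the paper uses, is that the Barron sampling lemma only needs $\|h_k\|_{B^0}$, and in that weak norm the temporal cosine coefficients do decay. Since $\|h_k\|_{B^0}\le\|b_k\|_{A^{s-1}}$, integrate by parts in the cosine coefficient:
\[
a_k=\frac2T\int_0^TV^n(t)\cos\Bigl(\frac{k\pi t}{T}\Bigr)dt=-\frac{2}{k\pi}\int_0^T\partial_tV^n(t)\sin\Bigl(\frac{k\pi t}{T}\Bigr)dt .
\]
The boundary terms vanish because $\sin 0=\sin k\pi=0$. Using only $|\lambda_k|\le1$, this gives
\[
\|b_k\|_{A^{s-1}}\le\|a_k\|_{A^{s-1}}\le\frac{2T}{k\pi}\sup_t\|\partial_tV^n(t)\|_{A^{s-1}} .
\]
So the uniform $C^1([0,T];A^{s-1})$ bound you already invoke for the temporal term is exactly what produces the $1/k$ decay. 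Then $\sum_{k=1}^{N_t}k^{-1}\le1+\log N_t$ gives the logarithm. The strong bound $\|b_k\|_{A^{s+1}}\le2M$ is needed only for $k=0$ and to know that $b_k\in A^{s+1}$.

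The same repair fixes the feedback part, using $\partial_tu^n\in C([0,T];B^{s-2})$ uniformly in $n$ and $B^{s-2}\hookrightarrow B^0$. The rest of your plan matches the paper's proof: the three-term decomposition, the Banach-valued Jackson rate in $A^{s-1}$, per-mode sampling with $N_x$ neurons, the product-to-sum neuron count, and the terminal statement.
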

Here $N_t$ is the maximal temporal cosine mode and $N_x$ is the spatial approximation budget assigned to each temporal mode. The three terms on the right-hand side are, respectively, the HJB iteration error, the temporal Jackson error, and the accumulated spatial Barron approximation error. The logarithmic factor arises from the $k^{-1}$ decay of the Banach-valued temporal cosine coefficients.

\section{Barron-Space Tools for the Variable-Coefficient Parabolic Problem}
The construction follows the classical Levi parametrix method and its parabolic developments; see Levi~\cite{levi1907} and, for variable-coefficient parabolic fundamental-solution and parametrix theory, \cite{aronson1967,eidelman1998,friedman1964} and \cite[Chapter 7, Section 13]{taylor2023}. The principal quadratic symbol is frozen at the spatial base point and its Gaussian exponential is corrected by a Volterra series. Unlike the smooth pseudodifferential setting, the estimates here are global in augmented Barron and weighted Fourier--$L^1$ norms. Wiener--L\'evy and GRS spectral invariance control the frozen multiplier, while the defect has the integrable order $(r-t)^{-1/2}$.
\subsection{Basic Barron-Space Estimates}
\begin{proposition}[Basic Barron estimates]\label{prop:basic}
Let $s\ge 0$. Then $B^{s+1}\hookrightarrow B^s\hookrightarrow B^0\hookrightarrow C_0(\R^d)$, and, for $i,j=1,\ldots,d$, $\norm{\partial_i h}_{B^s}\le\norm{h}_{B^{s+1}}$ and $\norm{\partial_{ij}h}_{B^{s-1}}\le\norm{h}_{B^{s+1}}$. For scalar functions,
\[
\norm{h_1h_2}_{B^s}\le\norm{h_1}_{B^s}\norm{h_2}_{B^s}.
\]
These estimates extend componentwise to finite-dimensional vector and matrix spaces.
\end{proposition}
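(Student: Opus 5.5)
All four assertions are direct computations on the Fourier side, using only the definition \eqref{eq:barron-space} and the convention \eqref{eq:fourier-convention}. We treat them in order: embeddings, derivatives, products.

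\emph{Embeddings.} The weight $(1+|\xi|)^s$ is nondecreasing in $s$, which gives $\|h\|_{B^s}\le\|h\|_{B^{s+1}}$ and $\|h\|_{B^0}\le\|h\|_{B^s}$. For $h\in B^0$ we have $\hat h\in L^1$. By the inversion formula in \eqref{eq:fourier-convention}, valid in $\mathcal S'$ because $\hat h$ is an integrable function, $h(x)=\int\hat h(\xi)e^{ix\cdot\xi}\,d\xi$. Hence $h$ is continuous, $\|h\|_\infty\le\|h\|_{B^0}$, and $h$ vanishes at infinity by the Riemann--Lebesgue lemma. So $B^0\hookrightarrow C_0$ holds with constant $1$.

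\emph{Derivatives.} Under \eqref{eq:fourier-convention}, $\widehat{\partial_i h}(\xi)=i\xi_i\hat h(\xi)$ and $\widehat{\partial_{ij}h}(\xi)=-\xi_i\xi_j\hat h(\xi)$. Since $|\xi_i|\le|\xi|\le 1+|\xi|$, we get
\[
\|\partial_i h\|_{B^s}\le\int|\hat h|(1+|\xi|)^{s+1}\,d\xi=\|h\|_{B^{s+1}}.
\]
Similarly $|\xi_i\xi_j|\le(1+|\xi|)^2$ gives $\|\partial_{ij}h\|_{B^{s-1}}\le\|h\|_{B^{s+1}}$. This needs $s-1$ only as a weight exponent, which is fine for all real indices.

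\emph{Products.} This is the one step requiring care, namely justifying the convolution formula. For $h_1,h_2\in B^s\subset B^0$, both functions are bounded and continuous, so the product $h_1h_2$ is a well-defined tempered distribution. I will show that $\widehat{h_1h_2}=\hat h_1*\hat h_2$, with no $(2\pi)^{-d}$ factor under this normalization. Write
\[
h_1h_2(x)=\iint\hat h_1(\eta)\hat h_2(\zeta)e^{ix\cdot(\eta+\zeta)}\,d\eta\,d\zeta.
\]
Fubini applies since $\hat h_1,\hat h_2\in L^1$. The change of variables $\xi=\eta+\zeta$ then exhibits $h_1h_2$ as the inverse transform of the $L^1$ function $\hat h_1*\hat h_2$, and uniqueness of the Fourier transform on $\mathcal S'$ identifies it.

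Next, for $s\ge0$ the triangle inequality $|\xi|\le|\xi-\eta|+|\eta|$ gives
\[
(1+|\xi|)^s\le(1+|\xi-\eta|)^s(1+|\eta|)^s.
\]
Therefore
\[
\|h_1h_2\|_{B^s}\le\iint|\hat h_1(\xi-\eta)||\hat h_2(\eta)|(1+|\xi-\eta|)^s(1+|\eta|)^s\,d\eta\,d\xi=\|h_1\|_{B^s}\|h_2\|_{B^s},
\]
by Tonelli.

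\emph{Vector and matrix case.} The norms are $\ell^1$ sums of component norms, so every estimate extends componentwise. For matrix products, $\|(h_1h_2)_{ik}\|\le\sum_j\|h_{1,ij}\|\|h_{2,jk}\|$, and summing over $i,k$ is bounded by the product of the entrywise $\ell^1$ norms.
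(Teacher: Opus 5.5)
Your proposal is correct and follows essentially the same route as the paper's proof: monotonicity of the weight for the embeddings, Fourier inversion plus Riemann--Lebesgue for $B^0\hookrightarrow C_0(\R^d)$, the multiplier symbols $i\xi_i$ and $-\xi_i\xi_j$ for the derivative bounds, and the convolution identity with the submultiplicative weight $(1+|\xi|)^s\le(1+|\xi-\eta|)^s(1+|\eta|)^s$ for the product estimate. The paper only sketches these steps and cites Feng--Lu. Your version additionally justifies the convolution identity $\widehat{h_1h_2}=\hat h_1*\hat h_2$ via Fubini and uniqueness in $\mathcal{S}'$, and checks the entrywise-$\ell^1$ matrix product bound.
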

\begin{proof}
The scale embeddings follow directly from the definition of the spectral Barron norm; see also \cite[Proposition~3.2]{feng2026}. The inclusion
\(
B^0(\R^d)\hookrightarrow C_0(\R^d)
\)
follows from Fourier inversion and the Riemann--Lebesgue lemma. The differentiation estimates follow directly from the Fourier definition; see also \cite[Proposition~3.3]{feng2026} for the first-order estimate. The second-order estimate follows by iteration. The product estimate follows from the convolution identity and the submultiplicativity of the weight.
\end{proof}
\begin{proposition}[Augmented Barron algebra estimates]\label{prop:algebra}
Let $s\ge 0$. For $a,a_1,a_2\in A^s$, $h\in B^s$, and $i,j=1,\ldots,d$,
\[
\norm{a_1a_2}_{A^s}\le\norm{a_1}_{A^s}\norm{a_2}_{A^s},\qquad \norm{a_1h}_{B^s}\le\norm{a_1}_{A^s}\norm{h}_{B^s}.
\]
For $a\in A^r$, the differentiation bounds are
\[
\|\partial_i a\|_{B^{r-1}}\le\|a\|_{A^r}\quad(r\ge1),\qquad
\|\partial_{ij}a\|_{B^{r-2}}\le\|a\|_{A^r}\quad(r\ge2).
\]
\end{proposition}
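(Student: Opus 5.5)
We write each element of $A^s$ through its unique decomposition from Remark~\ref{rem:unique}: $a_1=c_1+h_1$, $a_2=c_2+h_2$ and $a=c+k$, with constants $c_1,c_2,c$ and Barron parts $h_1,h_2,k\in B^s$. The plan is to expand each product, check that constants land in the constant slot and everything else in $B^s$, and then apply Proposition~\ref{prop:basic} term by term.

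\textbf{First product estimate.} We have
\[
a_1a_2=c_1c_2+(c_1h_2+c_2h_1+h_1h_2).
\]
The first term is a constant. Each term in the bracket lies in $B^s$: the first two because $B^s$ is a vector space, the last by the product estimate of Proposition~\ref{prop:basic}. By uniqueness of the decomposition,
\[
\|a_1a_2\|_{A^s}=|c_1c_2|+\|c_1h_2+c_2h_1+h_1h_2\|_{B^s}.
\]
Using the triangle inequality, homogeneity of the norm and $\|h_1h_2\|_{B^s}\le\|h_1\|_{B^s}\|h_2\|_{B^s}$, this is at most
\[
|c_1||c_2|+|c_1|\|h_2\|_{B^s}+|c_2|\|h_1\|_{B^s}+\|h_1\|_{B^s}\|h_2\|_{B^s}=(|c_1|+\|h_1\|_{B^s})(|c_2|+\|h_2\|_{B^s}).
\]

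\textbf{Second product estimate.} Similarly $a_1h=c_1h+h_1h\in B^s$, so
\[
\|a_1h\|_{B^s}\le|c_1|\|h\|_{B^s}+\|h_1\|_{B^s}\|h\|_{B^s}=\|a_1\|_{A^s}\|h\|_{B^s}.
\]

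\textbf{Derivative bounds.} Constants have zero derivative, so $\partial_ia=\partial_ik$ and $\partial_{ij}a=\partial_{ij}k$. Proposition~\ref{prop:basic} applied with index $r-1\ge0$ gives
\[
\|\partial_ik\|_{B^{r-1}}\le\|k\|_{B^r}\le\|a\|_{A^r},
\]
and applied with index $r-2\ge0$ gives the second-order bound in the same way. Here we only need the elementary weight estimate $|\xi_i|(1+|\xi|)^{r-1}\le(1+|\xi|)^r$, and its second-order analogue, valid for these indices.

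\textbf{Vector and matrix case.} For vector- and matrix-valued entries the same argument runs componentwise. Because the norms are entrywise $\ell^1$, summing the scalar bounds reproduces the stated estimates; for matrix products, the sum over the contraction index is absorbed by the $\ell^1$ norms.

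\textbf{Main obstacle.} The only real subtlety is the scalar product estimate in $B^s$, which is supplied by Proposition~\ref{prop:basic} through $(1+|\xi|)^s\le(1+|\xi-\eta|)^s(1+|\eta|)^s$. The identification of the $A^s$ norm of a product relies on the uniqueness of the decomposition, so that $c_1c_2$ is indeed its constant component.
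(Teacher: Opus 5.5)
Your proposal is correct and follows the paper's proof essentially verbatim: decompose $a_j=c_j+h_j$, expand the product, apply the $B^s$ product bound term by term (the module estimate being the case of one vanishing constant), and use that differentiation kills the constant part together with $|\xi_i|\le 1+|\xi|$ and $|\xi_i\xi_j|\le(1+|\xi|)^2$. One small indexing point: the second-order bound follows from the second-order estimate of Proposition~\ref{prop:basic} at index $r-1$, or from iterating its first-order bound at indices $r-2$ and $r-1$. Applying the second-order estimate at index $r-2$ would give the wrong indices, but your direct weight inequality already covers the bound.
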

The proof is postponed to Appendix~A.1.

\subsection[Variable-Coefficient Parabolic Terminal Problem and Frozen Fourier Parametrix]{\texorpdfstring{Variable-Coefficient Parabolic Terminal Problem and Frozen Fourier Parametrix}{Variable-Coefficient Parabolic Terminal Problem and Frozen Fourier Parametrix}}
Throughout Section~3, assume only the hypotheses on $a$ in
Theorem~\ref{thm:linear-wellposedness}, and set
$\Lambda_a=\sup_t\|a(t)\|_{A^{s+2}}$.
We consider the terminal-value problem
\begin{equation}
\partial_t v(t;x)+\frac12\,a(t;x):D_x^2 v(t;x)+H(t;x)=0,\qquad v(T;x)=\varphi(x).\label{eq:linear-pde}
\end{equation}
Set
\[
(L_t v)(x):=\frac12\,a(t;x):D_x^2 v(x),\qquad (\A v)(t;x):=-\partial_t v(t;x)-\bigl(L_t[v(t;\cdot)]\bigr)(x).
\]
Thus \eqref{eq:linear-pde} is $(\A v)(t;x)=H(t;x)$ with $v(T;x)=\varphi(x)$. Our goal is to construct a zero-terminal Green operator
\[
G_A:C\bigl([0,T];A^s\bigr)\to C\bigl([0,T];A^{s+1}\bigr)
\]
and a terminal propagator $U_A(t;T):A^{s+1}\to A^{s+1}$ such that
\[
\bigl(\A(G_AH)\bigr)(t;x)=H(t;x),\qquad (G_AH)(T;x)=0,
\]
\[
\bigl(\A[U_A(\cdot;T)\varphi]\bigr)(t;x)=0,\qquad \bigl(U_A(T;T)\varphi\bigr)(x)=\varphi(x).
\]
The exact solution will then be
\begin{equation}
v(t;x)=\bigl(U_A(t;T)\varphi\bigr)(x)+(G_AH)(t;x).\label{eq:representation}
\end{equation}
\begin{proposition}[Diffusion coefficient]\label{prop:diffusion}
Under Assumption~\ref{ass:standing},
\begin{equation}
a=\sigma\sigma^{\top}\in C\bigl([0,\bar T];A^{s+2}(\R^d;\R^{d\times d})\bigr).\label{eq:diffusion-regularity}
\end{equation}
With
\(
\Lambda_a:=\sup_{t\in[0,\bar T]}\norm{a(t)}_{A^{s+2}}<\infty,
\)
one has
\[
\lambda I\le a(t;x)\le\Lambda_a I
\]
in the sense of quadratic forms.
\end{proposition}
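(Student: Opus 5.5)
The plan is to obtain regularity, continuity in time, and both quadratic-form bounds from the algebra estimates of Proposition~\ref{prop:algebra}, applied at the level $A^{s+2}$ and entrywise to matrices.

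\textbf{Regularity and continuity.} Write $\sigma(t)=c(t)+h(t)$ with $c(t)\in\R^{d\times d}$ and $h(t)\in B^{s+2}$. Each entry of $a=\sigma\sigma^\top$ is $a_{ij}=\sum_k\sigma_{ik}\sigma_{jk}$. By the product estimate of Proposition~\ref{prop:algebra} with $s+2\ge0$, every product $\sigma_{ik}\sigma_{jk}$ lies in $A^{s+2}$ with norm at most $\|\sigma_{ik}\|_{A^{s+2}}\|\sigma_{jk}\|_{A^{s+2}}$. Summing over entries with the $\ell^1$ convention gives $\|a(t)\|_{A^{s+2}}\le\|\sigma(t)\|_{A^{s+2}}^2$. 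For continuity in $t$, use bilinearity:
\[
a(t)-a(t')=(\sigma(t)-\sigma(t'))\sigma(t)^\top+\sigma(t')(\sigma(t)-\sigma(t'))^\top .
\]
The same product bound then gives
\[
\|a(t)-a(t')\|_{A^{s+2}}\le\bigl(\|\sigma(t)\|_{A^{s+2}}+\|\sigma(t')\|_{A^{s+2}}\bigr)\|\sigma(t)-\sigma(t')\|_{A^{s+2}},
\]
which tends to zero as $t'\to t$ because $\sigma\in C([0,\bar T];A^{s+2})$. Symmetry and real-valuedness of $a$ are immediate from its definition.

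\textbf{Finiteness of $\Lambda_a$.} A continuous map on the compact interval $[0,\bar T]$ into a Banach space is bounded, so $\Lambda_a<\infty$. Alternatively, $\Lambda_a\le\sup_t\|\sigma(t)\|_{A^{s+2}}^2$ directly.

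\textbf{Quadratic-form bounds.} The lower bound $\lambda I\le a$ is exactly the ellipticity hypothesis \eqref{eq:ellipticity}. For the upper bound, first note that for $c+h\in A^r$ with $r\ge0$ one has the pointwise bound $\sup_x|c+h(x)|\le|c|+\|h\|_{B^0}\le\|c+h\|_{A^r}$. Here $\|h\|_{\infty}\le\int|\hat h|$ follows from Fourier inversion and Proposition~\ref{prop:basic}. Applying this entrywise gives $\sum_{i,j}|a_{ij}(t;x)|\le\|a(t)\|_{A^{s+2}}\le\Lambda_a$. Then for every $\xi$,
\[
\xi^\top a\,\xi\le\sum_{i,j}|a_{ij}||\xi_i||\xi_j|\le\max_{i,j}|\xi_i||\xi_j|\sum_{i,j}|a_{ij}|\le|\xi|^2\Lambda_a ,
\]
since $|\xi_i||\xi_j|\le|\xi|^2$.

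The only point needing care is keeping the norm conventions consistent: the entrywise $\ell^1$ matrix norm in the $A^s$ definition, and the sup bound coming from the $B^0$ embedding. I do not expect any step here to be a genuine obstacle.
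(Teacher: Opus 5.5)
Your proposal is correct and follows the paper's proof essentially line by line. You use the entrywise algebra estimate to get $\|\sigma\sigma^\top\|_{A^{s+2}}\le\|\sigma\|_{A^{s+2}}^2$, the bilinear expansion of $a(t)-a(t')$ for time continuity, the bound $\sum_{ij}\|a_{ij}(t)\|_\infty\le\Lambda_a$ for the upper quadratic-form estimate, and the ellipticity hypothesis for the lower bound; you also spell out the finiteness of $\Lambda_a$ and the sup-norm embedding, which the paper leaves implicit.
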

\begin{proof}
See Appendix A.1.
\end{proof}
\subsection{Frozen Principal Symbol and the Wiener--GRS Gaussian Estimate}
For the Fourier-side description, set
\(
w_s(\xi):=(1+|\xi|)^s
\)
and define the weighted Wiener algebra
\[
L^1_{w_s}(\R^d):=\Bigl\{u:\R^d\to\C\text{ measurable}:\norm{u}_{L^1_{w_s}}:=\int_{\R^d}|u(\xi)|\,w_s(\xi)\,d\xi<\infty\Bigr\}.
\]
With convolution as its product, the Fourier transform gives the isometric algebra correspondence
\[
\mathcal{F}:B^s(\R^d)\to L^1_{w_s}(\R^d),\qquad h\mapsto\hat h.
\]
Let $\delta_0$ denote the Dirac mass at the origin and define the unitized weighted Wiener algebra
\[
\widetilde L{}^1_{w_s}:=\C\delta_0\oplus L^1_{w_s},\qquad \norm{c\delta_0+u}_{\widetilde L{}^1_{w_s}}:=|c|+\norm{u}_{L^1_{w_s}}.
\]
Since
\(
\mathcal{F}(c+h)=c\delta_0+\hat h,
\)
the augmented space has the corresponding isometric unital algebra identification
\[
\mathcal{F}:A^s\to\widetilde L{}^1_{w_s},\qquad c+h\mapsto c\delta_0+\hat h.
\]
Thus adjoining spatial constants in $A^s$ is exactly the unitization of the Fourier-side Wiener algebra. The Wiener--L\'evy and GRS lemmas used below are proved in Appendix A. Vector- and matrix-valued spaces are understood componentwise.
\begin{remark}[Complexification]\label{rem:complex}
Fourier analysis and holomorphic functional calculus use the complexified
spaces. All PDE and control data below are real-valued, as are the
solutions; the linear result also extends to complex data by taking
real and imaginary parts.
\end{remark}
The principal symbol is
\(
p(t;x;\xi):=\frac12\,\xi^{\top}a(t;x)\xi.
\)
If $a$ were independent of $x$, its time-integrated exponential would be an ordinary Fourier multiplier. Here it depends on $x$ and becomes the amplitude of the frozen parametrix constructed below.
We fix the normalization used throughout Subsections 3.3--3.4. For $0\le t<r\le\bar T$ and $\xi\neq 0$, write
\begin{equation}
\omega:=\frac{\xi}{|\xi|},\qquad q:=(r-t)|\xi|^2,\qquad \vartheta_{t,r,\omega}(x):=\frac{1}{2(r-t)}\int_t^r\omega^{\top}a(\tau;x)\omega\,d\tau,\label{eq:theta}
\end{equation}
and extend the last quantity to the diagonal by $\vartheta_{t,t,\omega}(x):=\frac12\omega^{\top}a(t;x)\omega$ for $\omega\in S^{d-1}$.
\begin{lemma}[Normalized frozen-coefficient family]\label{lem:family}
Let $\vartheta_{t,r,\omega}$ be defined by \eqref{eq:theta} and its diagonal extension. Then
\[
\mathcal{K}:=\{\vartheta_{t,r,\omega}:0\le t\le r\le\bar T,\ \omega\in S^{d-1}\}
\]
is compact in $A^{s+2}$.
\end{lemma}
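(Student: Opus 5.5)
The compactness will come from continuity: I will show that the map
\[
\Phi:\mathcal P:=\{(t,r,\omega):0\le t\le r\le\bar T,\ \omega\in S^{d-1}\}\to A^{s+2},\qquad (t,r,\omega)\mapsto\vartheta_{t,r,\omega},
\]
is continuous. Since $\mathcal P$ is a compact subset of $\R^2\times\R^d$, its image $\mathcal K=\Phi(\mathcal P)$ is then compact. The argument has three steps.

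\textbf{Step 1 (reduce to matrix averages).} Write $\vartheta_{t,r,\omega}=\tfrac12\,\omega^\top \bar a_{t,r}\,\omega$, where
\[
\bar a_{t,r}:=\frac{1}{r-t}\int_t^r a(\tau)\,d\tau\quad (t<r),\qquad \bar a_{t,t}:=a(t).
\]
The integral is a Bochner integral in the Banach space $A^{s+2}(\R^d;\R^{d\times d})$, which exists because $a\in C([0,\bar T];A^{s+2})$ by Proposition~\ref{prop:diffusion}. Point evaluation is continuous on $A^{s+2}$, since both the constant part and the $B^{s+2}\hookrightarrow C_0$ part are evaluated continuously. So the Bochner average agrees pointwise with the average in \eqref{eq:theta}.

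\textbf{Step 2 (continuity of the average).} I claim $(t,r)\mapsto\bar a_{t,r}$ is continuous into $A^{s+2}$ on the closed triangle $\{0\le t\le r\le\bar T\}$.

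Off the diagonal this is immediate from continuity of the integral and of $1/(r-t)$.

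At a diagonal point $(t_0,t_0)$, I use the substitution $\tau=t+\theta(r-t)$, valid for $t<r$:
\[
\bar a_{t,r}=\int_0^1 a\bigl(t+\theta(r-t)\bigr)\,d\theta .
\]
For $t=r$ this formula also gives $a(t)$, so it holds on the whole closed triangle. Now $a$ is uniformly continuous on $[0,\bar T]$ into $A^{s+2}$. Hence
\[
\|\bar a_{t,r}-\bar a_{t',r'}\|_{A^{s+2}}\le\sup_{\theta\in[0,1]}\|a(t+\theta(r-t))-a(t'+\theta(r'-t'))\|_{A^{s+2}},
\]
and the right-hand side tends to $0$ as $(t',r')\to(t,r)$. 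This single formula handles the diagonal and non-diagonal cases together, so no separate diagonal analysis is needed.

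\textbf{Step 3 (joint continuity in $\omega$).} The map $(\omega,M)\mapsto\tfrac12\omega^\top M\omega=\tfrac12\sum_{i,j}\omega_i\omega_jM_{ij}$ is a finite sum of scalar multiples of the components $M_{ij}\in A^{s+2}(\R^d)$. The $A^{s+2}$ norm scales with scalar multiplication and the norm on the matrix-valued space is the entrywise $\ell^1$ sum. Therefore
\[
\|\tfrac12\omega^\top M\omega-\tfrac12\omega'^\top M'\omega'\|_{A^{s+2}}\le \tfrac12\bigl(|\omega-\omega'|(|\omega|+|\omega'|)\,\|M\|+\|M-M'\|\bigr),
\]
up to dimension constants. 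This gives joint continuity of $\Phi$.

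\textbf{Main obstacle.} The only delicate point is the diagonal extension, and the rescaled integral in Step 2 removes it. One subtlety remains: the Bochner average must coincide with the pointwise average in \eqref{eq:theta}. This is exactly what continuity of point evaluation on $A^{s+2}$ provides in Step 1.

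Realness of the data is not needed for the argument. Compactness of $\mathcal K$ is then the conclusion of the lemma.
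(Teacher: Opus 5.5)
Your proposal is correct and follows the same route as the paper. Both arguments prove continuity of $(t,r,\omega)\mapsto\vartheta_{t,r,\omega}$ into $A^{s+2}$ on the compact parameter set $\{0\le t\le r\le\bar T\}\times S^{d-1}$, with the diagonal as the only delicate point. Your rescaled representation $\int_0^1 a(t+\theta(r-t))\,d\theta$ settles joint continuity on and off the diagonal in one step, which is slightly cleaner than the paper's separate estimate by $\sup_{\tau\in[t,r]}\|a(\tau)-a(t)\|_{A^{s+2}}$. Your Step 1 also makes explicit that the Bochner average coincides with the pointwise average in \eqref{eq:theta}, which the paper leaves implicit.
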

\begin{proof}
Because $a\in C([0,\bar T];A^{s+2})$, its time averages and their finite contractions against $\omega$ belong to $A^{s+2}$. Thus it is enough to verify continuity of the parameter map. Continuity away from $r=t$ is immediate. At the diagonal,
\[
\Bigl\lVert\frac{1}{r-t}\int_t^r a(\tau;x)\,d\tau-a(t;x)\Bigr\rVert_{A^{s+2}}\le\sup_{\tau\in[t,r]}\norm{a(\tau;x)-a(t;x)}_{A^{s+2}}\to 0,
\]
and continuity in $\omega$ follows from the finite componentwise sum. Thus $(t;r;\omega)\mapsto\vartheta_{t,r,\omega}$ is continuous on the compact set $\{0\le t\le r\le\bar T\}\times S^{d-1}$, whose image is $\mathcal{K}$.
\end{proof}
\begin{proposition}[Uniform frozen-multiplier estimate]\label{prop:frozen}
For $0\le t<r\le\bar T$ and $\xi\in\R^d$, define
\begin{equation}
M_{t,r}(x;\xi):=\exp\Bigl(-\frac12\int_t^r\xi^{\top}a(\tau;x)\xi\,d\tau\Bigr).\label{eq:gaussian-multiplier}
\end{equation}
There exists $C_M>0$, depending only on the coefficient family and the ellipticity bounds, such that
\begin{equation}
\norm{M_{t,r}(\cdot;\xi)}_{A^{s+2}}\le C_M\exp\Bigl(-\frac{\lambda}{4}(r-t)|\xi|^2\Bigr).\label{eq:gaussian-decay}
\end{equation}
\end{proposition}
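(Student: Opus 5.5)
We write $M_{t,r}(x;\xi)=\exp(-q\,\vartheta(x))$ with $q=(r-t)|\xi|^2$ and $\vartheta=\vartheta_{t,r,\omega}\in\mathcal K$ from Lemma~\ref{lem:family}. The case $\xi=0$ is trivial, since then $M\equiv1$ and $\|1\|_{A^{s+2}}=1$. The estimate \eqref{eq:gaussian-decay} therefore reduces to a uniform statement about the one-parameter semigroup $q\mapsto e^{-q\vartheta}$ in the Banach algebra $A^{s+2}$:
\[
\sup_{\vartheta\in\mathcal K}\ \|e^{-q\vartheta}\|_{A^{s+2}}\le C_M e^{-\lambda q/4},\qquad q\ge0 .
\]
Uniform ellipticity gives $\vartheta(x)\ge\lambda/2$ pointwise. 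We split $\vartheta=c_\vartheta+h_\vartheta$ according to Remark~\ref{rem:unique}. Since $h_\vartheta\in C_0$, the constant satisfies $c_\vartheta=\lim_{|x|\to\infty}\vartheta(x)\ge\lambda/2$.

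The key step is spectral. Via $\mathcal F$, $A^{s+2}$ is the unital weighted Wiener algebra $\widetilde L{}^1_{w_{s+2}}$. The weight $w_{s+2}$ is polynomial, so it satisfies the GRS condition, and the Gelfand space of this algebra is the one-point compactification $\R^d\cup\{\infty\}$. Hence the spectrum of $\vartheta$ in $A^{s+2}$ is the closure of $\vartheta(\R^d)\cup\{c_\vartheta\}$. This set is contained in the interval $[\lambda/2,\Lambda_\vartheta]\subset\R$, where $\Lambda_\vartheta\le\|\vartheta\|_{A^{s+2}}$. We use the appendix's Wiener--L\'evy/GRS lemma for this. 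Since $\mathcal K$ is compact, the bound $\sup_{\mathcal K}\|\vartheta\|_{A^{s+2}}=:\Theta<\infty$ is uniform. Fix a contour $\Gamma$ enclosing $[\lambda/2,\Theta]$ that lies in the half-plane $\{\operatorname{Re} z\ge 3\lambda/8\}$. For example, take the boundary of the rectangle $[3\lambda/8,\Theta+1]\times[-1,1]$. The holomorphic functional calculus then gives
\[
e^{-q\vartheta}=\frac1{2\pi i}\oint_\Gamma e^{-qz}(z-\vartheta)^{-1}\,dz,\qquad
\|e^{-q\vartheta}\|_{A^{s+2}}\le\frac{|\Gamma|}{2\pi}\,e^{-3\lambda q/8}\sup_{z\in\Gamma}\|(z-\vartheta)^{-1}\|_{A^{s+2}} .
\]
This already beats the required rate $e^{-\lambda q/4}$.

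The main obstacle is the uniform resolvent bound $\sup_{\vartheta\in\mathcal K,\,z\in\Gamma}\|(z-\vartheta)^{-1}\|_{A^{s+2}}<\infty$. Pointwise invertibility for each fixed pair comes from the spectral statement above. For uniformity, observe that the set $\{(z,\vartheta)\}=\Gamma\times\mathcal K$ is compact in $\C\times A^{s+2}$. The set of invertible elements of a Banach algebra is open, and inversion is continuous on it. The map $(z,\vartheta)\mapsto\|(z-\vartheta)^{-1}\|$ is therefore continuous on a compact set, and hence bounded. This is exactly where Lemma~\ref{lem:family} is used.

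One point needs care: the contour must avoid the spectrum for every $\vartheta\in\mathcal K$ at once. This holds because every spectrum lies in $[\lambda/2,\Theta]$, and $\Gamma$ stays at distance at least $\lambda/8$ from that interval. Combining the resolvent bound with the contour estimate yields \eqref{eq:gaussian-decay}. The constant $C_M=\frac{|\Gamma|}{2\pi}\sup\|(z-\vartheta)^{-1}\|$ depends only on $\lambda$, $\Theta$, and $\mathcal K$, and hence only on the coefficient family and the ellipticity bounds.
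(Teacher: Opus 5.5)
Your proposal is correct and follows essentially the same route as the paper. You reduce to bounding $\|e^{-q\vartheta}\|_{A^{s+2}}$ uniformly over the compact family $\mathcal K$, place $\operatorname{spec}_{A^{s+2}}(\vartheta)$ in a real interval starting at $\lambda/2$ via Wiener--L\'evy and GRS spectral invariance, bound the resolvent uniformly on a rectangular contour using compactness of $\mathcal K$ and continuity of inversion, and conclude with the holomorphic functional calculus. The differences are only cosmetic: you phrase the spectral step through the Gelfand space $\R^d\cup\{\infty\}$, use $\sup_{\mathcal K}\|\vartheta\|_{A^{s+2}}$ rather than $\Lambda_a/2$ as the upper endpoint, and put the contour in $\operatorname{Re}z\ge 3\lambda/8$, which gives slightly better decay than required.
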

\begin{proof}
Set $\rho:=s+2$. For $\vartheta\in\mathcal{K}$, uniform ellipticity gives
\begin{equation}
\frac{\lambda}{2}\le\vartheta(x)\le\frac{\Lambda_a}{2}.\label{eq:theta-bounds}
\end{equation}
For $\xi\neq 0$, \eqref{eq:theta} yields
\begin{equation}
M_{t,r}(x;\xi)=e^{-q\vartheta_{t,r,\omega}(x)},\qquad \omega=\frac{\xi}{|\xi|},\qquad q=(r-t)|\xi|^2.\label{eq:gaussian-form}
\end{equation}
It therefore suffices to prove, uniformly for $\vartheta\in\mathcal{K}$ and $q\ge 0$,
\begin{equation}
\norm{e^{-q\vartheta}}_{A^{\rho}}\le C_Me^{-(\lambda/4)q}.\label{eq:gaussian-regularity}
\end{equation}
We first establish a uniform resolvent bound. For $z\notin[\lambda/2,\Lambda_a/2]$, put $\psi_z:=z1-\vartheta\in A^{\rho}$. By \eqref{eq:theta-bounds}, $\inf_{x\in\R^d}|\psi_z(x)|>0$, so Lemma~\ref{lem:wienerlevy} gives $\psi_z^{-1}\in A^0$. Write $\psi_z=c+h$ and $u:=\hat h\in L^1_{w_\rho}\subset L^1$. The unitized Fourier identification above, with $s$ replaced by $\rho$, the spectral-shift identity, and Lemma~\ref{lem:grs} give
\begin{equation}
\psi_z^{-1}\in A^0\iff -c\notin\operatorname{spec}_{L^1}(u)\iff -c\notin\operatorname{spec}_{L^1_{w_\rho}}(u)\iff \psi_z^{-1}\in A^{\rho}.\label{eq:spectral-invariance}
\end{equation}
All spectra in \eqref{eq:spectral-invariance} are computed in the corresponding unitizations. Thus
\[
\operatorname{spec}_{A^{\rho}}(\vartheta)\subset[\lambda/2,\Lambda_a/2]\qquad(\vartheta\in\mathcal{K}).
\]
Let $\mathcal{C}$ be the positively oriented boundary of
\[
\Bigl\{z\in\C:\frac{\lambda}{4}\le\operatorname{Re}z\le\frac{\Lambda_a}{2}+\frac{\lambda}{4},\ |\operatorname{Im}z|\le\frac{\lambda}{4}\Bigr\}.
\]
It surrounds the displayed interval and satisfies $\operatorname{Re}z\ge\lambda/4$ and $\mathrm{length}(\mathcal{C})=\Lambda_a+\lambda$. Since $\mathcal{K}$ is compact by Lemma~\ref{lem:family}, continuity of inversion on the invertible group gives
\begin{equation}
C_{\mathrm{res}}:=\sup_{\vartheta\in\mathcal{K},\,z\in\mathcal{C}}\norm{(z1-\vartheta)^{-1}}_{A^{\rho}}<\infty.\label{eq:resolvent-bound}
\end{equation}
The holomorphic functional calculus \cite[Definition 10.26 and Theorem 10.27]{rudin1991} now gives
\[
e^{-q\vartheta}=\frac{1}{2\pi i}\int_{\mathcal{C}}e^{-qz}(z1-\vartheta)^{-1}\,dz,
\]
and
\[
\norm{e^{-q\vartheta}}_{A^{\rho}}\le\frac 1 {2\pi}|\mathcal{C}|e^{-\frac{\lambda q}{4}}C_{\mathrm{res}},
\]
Thus \eqref{eq:gaussian-regularity} holds with
\begin{equation}
C_M:=\max\Bigl\{1,\,C_{\mathrm{res}}\frac{\Lambda_a+\lambda}{2\pi}\Bigr\}.\label{eq:cm-def}
\end{equation}
Combining this with \eqref{eq:gaussian-form} proves \eqref{eq:gaussian-decay} for $\xi\neq 0$. For $\xi=0$, $M_{t,r}(x;0)=1$ for every $x$, which is covered by the definition of $C_M$.
\end{proof}
\begin{proposition}[Frozen-multiplier derivative bounds]\label{prop:deriv}
Let $0\le t<r\le T\le\bar T$. Set
\[
C_{\nabla M}:=C_M\Lambda_a;\qquad C_{D^2M}:=C_M\Bigl(\frac{\Lambda_a}{2}+\frac{\Lambda_a^2}{4}\Bigr).
\]
For every $\xi\in\R^d$, $x\in\R^d$ and $i,j=1,\ldots,d$,
\begin{equation}
\max_i\norm{\partial_i M_{t,r}(\cdot;\xi)}_{B^{s+1}}\le C_{\nabla M}\,q\exp\Bigl(-\frac{\lambda}{4}q\Bigr),\label{eq:first-derivative-bound}
\end{equation}
\begin{equation}
\max_{i,j}\norm{\partial_{ij}M_{t,r}(\cdot;\xi)}_{B^s}\le C_{D^2M}(q+q^2)\exp\Bigl(-\frac{\lambda}{4}q\Bigr).\label{eq:second-derivative-bound}
\end{equation}
\end{proposition}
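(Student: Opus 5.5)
The plan is to differentiate the representation $M_{t,r}(x;\xi)=e^{-q\vartheta(x)}$ from \eqref{eq:gaussian-form}, with $\vartheta=\vartheta_{t,r,\omega}\in\mathcal K$, and to estimate each factor with the algebra bounds of Proposition~\ref{prop:algebra}. The exponential factor is controlled by Proposition~\ref{prop:frozen}, and the coefficient factors by the uniform bound $\|\vartheta\|_{A^{s+2}}\le \Lambda_a/2$. For $\xi=0$, $M\equiv1$ and all derivatives vanish, so we may assume $\xi\neq0$. The chain rule, valid in the Barron setting because $e^{-q\vartheta}$ is defined by the holomorphic functional calculus and agrees pointwise with the classical exponential, gives
\[
\partial_iM=-q\,(\partial_i\vartheta)\,e^{-q\vartheta},\qquad
\partial_{ij}M=\bigl(-q\,\partial_{ij}\vartheta+q^2\,\partial_i\vartheta\,\partial_j\vartheta\bigr)e^{-q\vartheta}.
\]

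\textbf{First derivative.} Proposition~\ref{prop:algebra} gives $\|\partial_i\vartheta\|_{B^{s+1}}\le\|\vartheta\|_{A^{s+2}}$. The triangle inequality for the $A^{s+2}$ norm together with $|\omega_k\omega_l|\le1$ gives
\[
\|\vartheta\|_{A^{s+2}}\le \tfrac12\sup_\tau\|a(\tau)\|_{A^{s+2}}\le\Lambda_a/2 .
\]
This is compatible with $\ell^1$ entrywise norms, possibly up to the harmless factor $\sum|\omega_k\omega_l|\le d$. If that factor appears, I would absorb it by reading $\Lambda_a$ as a bound on the contracted coefficients, or check that $|\omega|_\infty$-weighted sums suffice. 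Then $\|e^{-q\vartheta}\|_{A^{s+1}}\le\|e^{-q\vartheta}\|_{A^{s+2}}\le C_Me^{-\lambda q/4}$. Since $B^{s+1}$ is an $A^{s+1}$-module by Proposition~\ref{prop:algebra},
\[
\|\partial_iM\|_{B^{s+1}}\le q\,\tfrac{\Lambda_a}{2}\,C_Me^{-\lambda q/4}\le C_{\nabla M}\,q\,e^{-\lambda q/4}.
\]

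\textbf{Second derivative.} Use $\|\partial_{ij}\vartheta\|_{B^{s}}\le\Lambda_a/2$ and
\[
\|\partial_i\vartheta\,\partial_j\vartheta\|_{B^s}\le\|\partial_i\vartheta\|_{B^{s}}\|\partial_j\vartheta\|_{B^s}\le\Lambda_a^2/4,
\]
which follows from the embedding $B^{s+1}\hookrightarrow B^s$ and the product estimate of Proposition~\ref{prop:basic}. Multiplying by $e^{-q\vartheta}\in A^{s}$ with the module bound yields
\[
\|\partial_{ij}M\|_{B^s}\le C_M\bigl(q\,\Lambda_a/2+q^2\Lambda_a^2/4\bigr)e^{-\lambda q/4}\le C_{D^2M}(q+q^2)e^{-\lambda q/4}.
\]

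\textbf{Main obstacle.} The main point needing care is justifying the chain rule for the Barron-valued exponential. I would argue it pointwise: $B^0\hookrightarrow C_0$, and the functional-calculus exponential coincides with the classical function $x\mapsto e^{-q\vartheta(x)}$, because evaluation at a point is a character of the algebra $A^{\rho}$, or equivalently because the power series $\sum(-q\vartheta)^k/k!$ converges in $A^\rho$. Since $\vartheta\in A^{s+2}$ with $s>2$, $\vartheta$ is $C^2$, so the classical derivative formulas hold. Their right-hand sides lie in $B^{s+1}$ and $B^s$ respectively, and distributional derivatives agree with classical ones, so the norms computed above are the norms of $\partial_iM$ and $\partial_{ij}M$. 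A secondary, bookkeeping issue is the constant in $\|\vartheta\|_{A^{s+2}}\le\Lambda_a/2$ under the entrywise $\ell^1$ convention, which the stated constants $C_{\nabla M}=C_M\Lambda_a$ and $C_{D^2M}$ appear to accommodate with a factor-of-two margin.
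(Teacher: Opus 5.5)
Your proposal is correct and follows essentially the same route as the paper: you write $M_{t,r}=e^{-q\vartheta_{t,r,\omega}}$, differentiate it, bound $\partial_i\vartheta$ and $\partial_{ij}\vartheta$ through $\|\vartheta\|_{A^{s+2}}\le\Lambda_a/2$ and Proposition~\ref{prop:algebra}, and combine the product and module estimates with Proposition~\ref{prop:frozen}; your explicit justification of the chain rule (functional-calculus exponential equals the pointwise one, hence classical and distributional derivatives agree) is a welcome detail that the paper leaves implicit. The feared factor $d$ does not arise: under the entrywise $\ell^1$ convention, $\sum_{k,l}|\omega_k\omega_l|\,\|a_{kl}\|_{A^{s+2}}\le\max_{k,l}|\omega_k\omega_l|\,\|a\|_{A^{s+2}}\le\|a\|_{A^{s+2}}$, so $\|\vartheta\|_{A^{s+2}}\le\Lambda_a/2$ holds exactly as you first wrote.
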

\begin{proof}
For $\xi\neq 0$, \eqref{eq:gaussian-form} gives
\(M_{t,r}(x;\xi)=e^{-q\vartheta_{t,r,\omega}(x)}.\)
\begin{equation}
\partial_i M_{t,r}(x;\xi)=-qM_{t,r}(x;\xi)\partial_i\vartheta_{t,r,\omega}(x),\label{eq:d1-multiplier}
\end{equation}
\begin{equation}
\partial_{ij}M_{t,r}(x;\xi)=M_{t,r}(x;\xi)\bigl(q^2\partial_i\vartheta_{t,r,\omega}(x)\partial_j\vartheta_{t,r,\omega}(x)-q\partial_{ij}\vartheta_{t,r,\omega}(x)\bigr).\label{eq:d2-multiplier}
\end{equation}
The derivative bounds for $\vartheta_{t,r,\omega}$ follow from the
definition of $\vartheta$ and the differentiation estimates in
Proposition~\ref{prop:algebra}.
Moreover,
\[
\max_{i,j}\norm{\partial_{ij}\vartheta_{t,r,\omega}}_{B^s}
\le\max_i\norm{\partial_i\vartheta_{t,r,\omega}}_{B^{s+1}}
\le\norm{\vartheta_{t,r,\omega}}_{A^{s+2}}
\le\frac{\Lambda_a}{2}.
\]
By Proposition~\ref{prop:frozen} and the module estimate,
\[
\begin{aligned}
\norm{\partial_i M_{t,r}(\cdot;\xi)}_{B^{s+1}}&\le q\norm{M_{t,r}(\cdot;\xi)}_{A^{s+1}}\norm{\partial_i\vartheta_{t,r,\omega}}_{B^{s+1}}\le C_{\nabla M}\,qe^{-(\lambda/4)q};\\
\norm{\partial_{ij}M_{t,r}(\cdot;\xi)}_{B^s}&\le\norm{M_{t,r}(\cdot;\xi)}_{A^s}\Bigl(q^2\norm{\partial_i\vartheta_{t,r,\omega}}_{B^s}\norm{\partial_j\vartheta_{t,r,\omega}}_{B^s}+q\norm{\partial_{ij}\vartheta_{t,r,\omega}}_{B^s}\Bigr)\\
&\le C_Me^{-(\lambda/4)q}\Bigl(\frac{\Lambda_a^2}{4}q^2+\frac{\Lambda_a}{2}q\Bigr)\le C_{D^2M}(q+q^2)e^{-(\lambda/4)q}.
\end{aligned}
\]
For $\xi=0$, $M_{t,r}(x;0)=1$, so both derivatives vanish.
\end{proof}

\subsection{Frozen Parametrix and the Defect Operator}
We distinguish the two-parameter spatial family from the operators acting on time-dependent sources. For fixed $0\le t<r\le T$, define the frozen spatial operator
\begin{equation}
(P_{t,r}h)(x):=\int_{\R^d}e^{ix\cdot\xi}M_{t,r}(x;\xi)\hat h(\xi)\,d\xi,\qquad h\in B^s(\R^d).\label{eq:frozen-operator}
\end{equation}
For a time-dependent source $H\in C([0,T];B^s(\R^d))$, write
\(
\hat H(r;\xi):=\mathcal{F}_x[H(r;\cdot)](\xi).
\)
The source parametrix is the time-space operator
\begin{equation}
(\mathcal{P}H)(t;x):=\int_t^T\bigl(P_{t,r}[H(r;\cdot)]\bigr)(x)\,dr
=\int_t^T\!\int_{\R^d}e^{ix\cdot\xi}M_{t,r}(x;\xi)\hat H(r;\xi)\,d\xi\,dr.\label{eq:parametrix}
\end{equation}
Define the defect symbol
\begin{equation}
R_{t,r}(x;\xi):=-i\bigl(a(t;x)\xi\bigr)\cdot\nabla_x M_{t,r}(x;\xi)-\frac12\,a(t;x):D_x^2 M_{t,r}(x;\xi),\label{eq:defect-symbol}
\end{equation}
and the associated two-parameter spatial operator
\begin{equation}
(\mathcal{R}_{t,r}h)(x):=\int_{\R^d}e^{ix\cdot\xi}R_{t,r}(x;\xi)\hat h(\xi)\,d\xi.\label{eq:defect-operator}
\end{equation}
The Volterra defect operator acting on the time-dependent source is
\begin{equation}
(\Sigma H)(t;x):=\int_t^T\bigl(\mathcal{R}_{t,r}[H(r;\cdot)]\bigr)(x)\,dr
=\int_t^T\!\int_{\R^d}e^{ix\cdot\xi}R_{t,r}(x;\xi)\hat H(r;\xi)\,d\xi\,dr.\label{eq:sigma-operator}
\end{equation}
Thus $P_{t,r}$ and $\mathcal{R}_{t,r}$ map spatial functions to spatial functions at fixed $(t;r)$, whereas $\mathcal{P}$ and $\Sigma$ map a time-dependent source $H(r;x)$ to functions of $(t;x)$.

We shall repeatedly use the following elementary symbol estimate. It is the step that converts an $A^s$ bound in the physical variable of an amplitude into a $B^s$ operator bound.
\begin{lemma}[Fourier-symbol estimate]\label{lem:symbol}
Let $s\ge0$, $h\in B^0(\R^d)$, and let $\xi\mapsto m(\cdot;\xi)\in A^s(\R^d)$ be strongly measurable. Assume that
\(
\int_{\R^d}\norm{m(\cdot;\xi)}_{A^s}\,|\hat h(\xi)|\,(1+|\xi|)^s\,d\xi<\infty.
\)
Define
\(
(T_mh)(x):=\int_{\R^d}e^{ix\cdot\xi}m(x;\xi)\hat h(\xi)\,d\xi
\)
then
\begin{equation}
\norm{T_mh}_{B^s}\le\int_{\R^d}\norm{m(\cdot;\xi)}_{A^s}\,|\hat h(\xi)|\,(1+|\xi|)^s\,d\xi.\label{eq:symbol-estimate}
\end{equation}
\end{lemma}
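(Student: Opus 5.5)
The plan is to write $T_mh$ as a Bochner integral in $A^s$ and use that translating by a frequency shifts the Fourier transform. For each fixed $\xi$, decompose $m(\cdot;\xi)=c(\xi)+k(\cdot;\xi)$ with $c(\xi)\in\C$ and $k(\cdot;\xi)\in B^s$. By Remark~\ref{rem:unique}, this decomposition is unique and the norm is $|c(\xi)|+\norm{k(\cdot;\xi)}_{B^s}$. The function $x\mapsto e^{ix\cdot\xi}m(x;\xi)$ has Fourier transform
\[
c(\xi)\delta_\xi+\hat k(\cdot-\xi;\xi).
\]
So $T_mh$ splits into two contributions: the constant parts give $\int c(\xi)\hat h(\xi)e^{ix\cdot\xi}\,d\xi$, and the $B^s$ parts give $\int e^{ix\cdot\xi}k(x;\xi)\hat h(\xi)\,d\xi$. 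Measurability of $\xi\mapsto c(\xi)$ and $\xi\mapsto k(\cdot;\xi)$ follows from strong measurability of $m$, because the projections $A^s\to\C$ and $A^s\to B^s$ are bounded.

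The first step is to compute the Fourier transform of $T_mh$ by Fubini. The integrability hypothesis together with $(1+|\xi|)^s\ge1$ makes $\int\norm{m(\cdot;\xi)}_{A^s}|\hat h(\xi)|\,d\xi$ finite. Since $\sup_x|k(x;\xi)|\le\norm{k}_{B^0}$, the double integral defining $T_mh$ converges absolutely. Pairing with a Schwartz test function (or using the inverse-transform representation of $k$ directly) then gives, at frequency $\eta$,
\[
\widehat{T_mh}(\eta)=c(\eta)\hat h(\eta)+\int_{\R^d}\hat k(\eta-\xi;\xi)\hat h(\xi)\,d\xi .
\]
In other words, the constant part acts as a Fourier multiplier and the $B^s$ part contributes a twisted convolution.

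The second step is to estimate each piece in $L^1_{w_s}$. The multiplier piece gives
\[
\int|c(\eta)||\hat h(\eta)|(1+|\eta|)^s\,d\eta .
\]
For the convolution piece, use Tonelli and the submultiplicative inequality $(1+|\eta|)^s\le(1+|\eta-\xi|)^s(1+|\xi|)^s$. This bounds it by
\[
\int\Bigl(\int|\hat k(\zeta;\xi)|(1+|\zeta|)^s\,d\zeta\Bigr)|\hat h(\xi)|(1+|\xi|)^s\,d\xi=\int\norm{k(\cdot;\xi)}_{B^s}|\hat h(\xi)|(1+|\xi|)^s\,d\xi .
\]
Adding the two bounds gives \eqref{eq:symbol-estimate}, since $|c(\xi)|+\norm{k(\cdot;\xi)}_{B^s}=\norm{m(\cdot;\xi)}_{A^s}$.

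The main obstacle is justifying the Fubini step rigorously, in particular the joint measurability of $(\zeta,\xi)\mapsto\hat k(\zeta;\xi)$. The plan is to view $\xi\mapsto\hat k(\cdot;\xi)$ as a strongly measurable $L^1_{w_s}$-valued map. Approximating it by simple functions then yields a jointly measurable representative, and the $L^1$ bound just established licenses all the interchanges. This also shows that $T_mh$ is a bounded continuous function whose transform lies in $L^1_{w_s}$, so $T_mh\in B^s$ with the stated norm bound.
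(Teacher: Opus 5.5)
Your proposal is correct and follows essentially the same argument as the paper. You split $m(\cdot;\xi)$ into its constant part and its $B^s$ part, treat the constant part as an ordinary Fourier multiplier, and bound the twisted convolution $\int \hat k(\eta-\xi;\xi)\hat h(\xi)\,d\xi$ using Tonelli and the submultiplicativity $(1+|\eta|)^s\le(1+|\eta-\xi|)^s(1+|\xi|)^s$. The only difference is cosmetic: you obtain the Fourier-side formula by Fubini with a jointly measurable representative, whereas the paper gets it by commuting the isometry $B^s\to L^1_{w_s}$ with the absolutely convergent $B^s$-valued Bochner integral.
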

\begin{proof}
Write $m(\cdot;\xi)=c(\xi)+\tilde m(\cdot;\xi)$ with $\tilde m(\cdot;\xi)\in B^s$. The constant part is an ordinary Fourier multiplier. For the Barron part, the hypothesis makes the $B^s$-valued integral absolutely Bochner integrable. Since the Fourier transform is an isometry from $B^s$ to $L^1_{w_s}$, it commutes with this Bochner integral, and hence
\[
\widehat{T_{\tilde m}h}(\eta)=\int_{\R^d}\widehat{\tilde m}(\cdot;\xi)(\eta-\xi)\hat h(\xi)\,d\xi.
\]
The inequality $(1+|\eta|)^s\le(1+|\eta-\xi|)^s(1+|\xi|)^s$ and Tonelli's theorem give
\[
\norm{T_{\tilde m}h}_{B^s}\le\int_{\R^d}\norm{\tilde m(\cdot;\xi)}_{B^s}\,|\hat h(\xi)|\,(1+|\xi|)^s\,d\xi.
\]
The use of Tonelli is legitimate because the resulting integrand in $(\eta;\xi)$ is nonnegative; the displayed hypothesis makes its iterated integral finite. Adding the constant part proves \eqref{eq:symbol-estimate}.
\end{proof}
To identify its error, expand
\[
D_x^2\bigl(e^{ix\cdot\xi}M_{t,r}\bigr)=e^{ix\cdot\xi}\bigl(-\xi\xi^{\top}M_{t,r}+i\xi\otimes\nabla_x M_{t,r}+i\nabla_x M_{t,r}\otimes\xi+D_x^2 M_{t,r}\bigr).
\]
The quadratic term is cancelled exactly by $-\partial_t M_{t,r}(x;\xi)$. Consequently,
\begin{equation}
-\partial_t\bigl(e^{ix\cdot\xi}M_{t,r}(x;\xi)\bigr)-\tfrac12\,a(t;x):D_x^2\bigl(e^{ix\cdot\xi}M_{t,r}(x;\xi)\bigr)=e^{ix\cdot\xi}R_{t,r}(x;\xi),\label{eq:adjoint-identity}
\end{equation}
where $R_{t,r}$ is the defect symbol defined above.
The following proposition collects all pointwise-in-time estimates for the frozen parametrix and its defect. In particular, it supplies the weakly singular kernel used in the subsequent Volterra inversion.
\begin{proposition}[Parametrix bounds]\label{prop:parametrix}
There exist $C_P,C_\Sigma>0$, independent of $t,r,T$, such that for $0\le t<r\le T\le\bar T$, $h\in B^s$, and $\varphi\in B^{s+1}$, with
\[
C_{\mathrm{def}}:=\Lambda_a\Bigl(C_{\nabla M}+\frac12 C_{D^2M}\Bigr),
\]
\[
C_P:=C_M\max\Bigl\{1,\ \bar T^{1/2}+\Bigl(\frac{2}{e\lambda}\Bigr)^{1/2}\Bigr\},
\qquad
C_\Sigma:=C_{\mathrm{def}}\Bigl[\Bigl(\frac{6}{e\lambda}\Bigr)^{3/2}+\sqrt{\bar T}\Bigl(\frac{4}{e\lambda}+\frac{64}{e^2\lambda^2}\Bigr)\Bigr].
\]
Then
\begin{equation}
\norm{P_{t,r}h}_{B^s}\le C_P\norm{h}_{B^s},\label{eq:pt-bound}
\end{equation}
\begin{equation}
    \norm{P_{t,r}h}_{B^{s+1}}\le C_P(r-t)^{-1/2}\norm{h}_{B^s}\label{eq:pt-smoothing}
\end{equation}
and
\begin{equation}
\norm{P_{t,r}\varphi}_{B^{s+1}}\le C_P\norm{\varphi}_{B^{s+1}}.\label{eq:pt-terminal}
\end{equation}
Moreover, for every $h\in B^s$,
\begin{equation}
\norm{\mathcal{R}_{t,r}h}_{B^s}\le C_\Sigma(r-t)^{-1/2}\norm{h}_{B^s}.\label{eq:defect-bound}
\end{equation}
\end{proposition}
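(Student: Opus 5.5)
All four bounds follow from the Fourier-symbol estimate, Lemma~\ref{lem:symbol}, applied with a suitable amplitude. Each amplitude is bounded in $A^{s}$ or $A^{s+1}$ by Proposition~\ref{prop:frozen} or Proposition~\ref{prop:deriv}. The resulting Gaussian factor is then converted into a power of $(r-t)$ by elementary maximizations of the form
\[
\sup_{y\ge0}y^\alpha e^{-cy}=\Bigl(\frac{\alpha}{ec}\Bigr)^\alpha,
\]
uniformly in $\xi$.

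\textbf{Bound \eqref{eq:pt-bound}.} Take $m=M_{t,r}(\cdot;\xi)$. Since $\norm{\cdot}_{A^s}\le\norm{\cdot}_{A^{s+2}}$, Proposition~\ref{prop:frozen} gives $\norm{m}_{A^s}\le C_M$. Lemma~\ref{lem:symbol} then yields $\norm{P_{t,r}h}_{B^s}\le C_M\norm{h}_{B^s}$.

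\textbf{Bound \eqref{eq:pt-terminal}.} Apply Lemma~\ref{lem:symbol} with $s+1$ in place of $s$. This gives $\norm{P_{t,r}\varphi}_{B^{s+1}}\le C_M\norm{\varphi}_{B^{s+1}}$.

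\textbf{Bound \eqref{eq:pt-smoothing}.} Again use Lemma~\ref{lem:symbol} at level $s+1$, writing $(1+|\xi|)^{s+1}=(1+|\xi|)(1+|\xi|)^s$. It then suffices to bound
\[
\sup_\xi (1+|\xi|)e^{-\frac{\lambda}{4}(r-t)|\xi|^2}.
\]
The constant part is at most $1\le\bar T^{1/2}(r-t)^{-1/2}$. For the other part,
\[
|\xi|e^{-\frac\lambda4(r-t)|\xi|^2}=(r-t)^{-1/2}q^{1/2}e^{-\lambda q/4}\le(r-t)^{-1/2}\Bigl(\frac{2}{e\lambda}\Bigr)^{1/2}.
\]
Together these give exactly the constant $C_P$ shown. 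A strong-measurability check in $\xi$ for the amplitude is needed; it follows from the continuity of $\xi\mapsto\vartheta_{t,r,\omega}$ in $A^{s+2}$ away from $\xi=0$, by Lemma~\ref{lem:family}.

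\textbf{Defect bound \eqref{eq:defect-bound}.} This is the main step. By Lemma~\ref{lem:symbol} it reduces to bounding $\norm{R_{t,r}(\cdot;\xi)}_{A^s}$ by a function of $\xi$ whose supremum is $O((r-t)^{-1/2})$. Since $\nabla_xM$ and $D_x^2M$ lie in $B$, the symbol $R_{t,r}(\cdot;\xi)$ lies in $B^s$.

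For the first-order term, Proposition~\ref{prop:algebra}, the module estimate, and \eqref{eq:first-derivative-bound} give
\[
\norm{(a(t)\xi)\cdot\nabla_xM}_{B^s}\le\Lambda_a|\xi|\,C_{\nabla M}\,q e^{-\lambda q/4}.
\]
(Componentwise $\ell^1$ bookkeeping of $a\xi$ only changes constants.) Now write
\[
|\xi|\,q=(r-t)^{-1/2}q^{3/2}.
\]
Maximizing $q^{3/2}e^{-\lambda q/4}$ gives $(6/(e\lambda))^{3/2}$, so this term is bounded by $\Lambda_aC_{\nabla M}(6/(e\lambda))^{3/2}(r-t)^{-1/2}$.

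For the second-order term, \eqref{eq:second-derivative-bound} gives
\[
\norm{\tfrac12a:D_x^2M}_{B^s}\le\tfrac12\Lambda_aC_{D^2M}(q+q^2)e^{-\lambda q/4}.
\]
There is no factor of $|\xi|$ here, so no singularity arises. Maximizing gives $\sup q e^{-\lambda q/4}=4/(e\lambda)$ and $\sup q^2e^{-\lambda q/4}=64/(e^2\lambda^2)$. To put this into the weakly singular form, bound the resulting constant by $\sqrt{\bar T}(r-t)^{-1/2}$ times itself.

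Summing the two terms gives $C_\Sigma(r-t)^{-1/2}$, with the stated $C_{\mathrm{def}}$ absorbing $\Lambda_a$ and the factor $\tfrac12$.

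\textbf{Main obstacle.} The key point is the first-order defect term. Its explicit factor $|\xi|$ would ruin boundedness on $B^s$. It is compensated only because $\nabla_xM$ carries a factor $q$ (Proposition~\ref{prop:deriv}), which trades one power of $|\xi|$ for $(r-t)^{-1/2}$. I would also verify the integrability hypothesis of Lemma~\ref{lem:symbol}. It holds because the amplitude norms are bounded uniformly in $\xi$ for fixed $t<r$ and $h\in B^s$.
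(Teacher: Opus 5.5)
Your proposal is correct and follows the paper's proof essentially step for step. You apply Lemma~\ref{lem:symbol} with the amplitudes $M_{t,r}$ and $R_{t,r}$, bound them via Propositions~\ref{prop:frozen} and~\ref{prop:deriv}, and use the same elementary maximizations ($q^{1/2}$, $q^{3/2}$, $q$, $q^2$ against $e^{-\lambda q/4}$) together with $1\le\sqrt{\bar T}(r-t)^{-1/2}$. This reproduces exactly the stated constants $C_P$ and $C_\Sigma$.
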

\begin{proof}
The defect bound \eqref{eq:defect-bound} is the most delicate estimate, so we prove it. The componentwise algebra and module estimates, the uniform bound $\norm{a(t)}_{A^{s+2}}\le\Lambda_a$, and \eqref{eq:first-derivative-bound}--\eqref{eq:second-derivative-bound} give
\begin{equation}
\begin{aligned}
\norm{R_{t,r}(\cdot;\xi)}_{A^s}&\le\Lambda_a\Bigl(|\xi|\max_i\norm{\partial_i M_{t,r}(\cdot;\xi)}_{B^{s+1}}+\frac12\max_{i,j}\norm{\partial_{ij}M_{t,r}(\cdot;\xi)}_{B^s}\Bigr)\\
&\le C_{\mathrm{def}}\bigl(|\xi|q+q+q^2\bigr)e^{-(\lambda/4)q}.
\end{aligned}\label{eq:defect-symbol-bound}
\end{equation}
Since
\[
|\xi|qe^{-(\lambda/4)q}\le\Bigl(\frac{6}{e\lambda}\Bigr)^{3/2}(r-t)^{-1/2},\qquad
(q+q^2)e^{-(\lambda/4)q}\le\frac{4}{e\lambda}+\frac{64}{e^2\lambda^2},
\]
and $1\le\bar T^{1/2}(r-t)^{-1/2}$, it follows that
\[
\norm{R_{t,r}(\cdot;\xi)}_{A^s}\le C_\Sigma(r-t)^{-1/2}
\]
uniformly in $\xi$. A final application of Lemma~\ref{lem:symbol} yields
\eqref{eq:defect-bound}. The remaining estimates for
$P_{t,r}$ are obtained by the same Fourier-symbol argument and are
provided in Appendix~A for completeness.
\end{proof}
Consequently, for every $H\in C([0,T];B^s)$,
\begin{equation}
\norm{(\mathcal P H)(t)}_{B^{s+1}}
\le
C_P\int_t^T(r-t)^{-1/2}\norm{H(r)}_{B^s}\,dr,
\qquad 0\le t\le T,
\label{eq:parametrix-bochner}
\end{equation}
and
\begin{equation}
\norm{(\Sigma H)(t)}_{B^s}
\le
C_\Sigma\int_t^T(r-t)^{-1/2}\norm{H(r)}_{B^s}\,dr,
\qquad 0\le t\le T.
\label{eq:sigma-bochner}
\end{equation}
\begin{lemma}[Continuity at the diagonal]\label{lem:diagonal}
Let $s>2$ be the exponent in Theorem~\ref{thm:linear-wellposedness}. Then, for every
$0\le r\le s+2$ and every $h\in B^r$,
\begin{equation}
\lim_{\varepsilon\to0}
\sup_{0\le t\le T-\varepsilon}
\norm{(P_{t,t+\varepsilon}-I)h}_{B^r}
=0.
\label{eq:diagonal-continuity}
\end{equation}
\end{lemma}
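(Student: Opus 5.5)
We prove \eqref{eq:diagonal-continuity} by a uniform-boundedness plus density argument, reducing everything to pointwise-in-$\xi$ convergence of the frozen amplitude $M_{t,t+\varepsilon}(\cdot;\xi)$ to $1$.

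\emph{Step 1: uniform bound.} Write $(P_{t,t+\varepsilon}-I)h$ as the operator $T_m h$ of Lemma~\ref{lem:symbol}, with amplitude
\[
m_{t,\varepsilon}(x;\xi):=M_{t,t+\varepsilon}(x;\xi)-1 .
\]
By Proposition~\ref{prop:frozen} and the embedding $A^{s+2}\hookrightarrow A^r$ for $r\le s+2$,
\[
\norm{m_{t,\varepsilon}(\cdot;\xi)}_{A^r}\le C_M+1 ,
\]
uniformly in $t$, $\varepsilon$ and $\xi$. Lemma~\ref{lem:symbol} then gives
\[
\sup_{t,\varepsilon}\norm{P_{t,t+\varepsilon}-I}_{B^r\to B^r}\le C_M+1 .
\]
Since functions whose Fourier transform is compactly supported (say in $L^1$) are dense in $B^r$, an $\varepsilon/3$ argument reduces the claim to $h$ with $\operatorname{supp}\hat h\subset\{|\xi|\le \rho\}$.

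\emph{Step 2: frequency-localized data.} For such $h$, Lemma~\ref{lem:symbol} gives
\[
\norm{(P_{t,t+\varepsilon}-I)h}_{B^r}\le(1+\rho)^r\norm{\hat h}_{L^1}\sup_{|\xi|\le\rho,\;t}\norm{M_{t,t+\varepsilon}(\cdot;\xi)-1}_{A^{s+2}} .
\]
It therefore suffices to show that
\[
\norm{e^{-q\vartheta}-1}_{A^{s+2}}\to0 \quad\text{as } q\to0,
\]
uniformly over $\vartheta\in\mathcal K$. Here $q=\varepsilon|\xi|^2\le\varepsilon\rho^2$ and $\vartheta=\vartheta_{t,t+\varepsilon,\omega}$, as in \eqref{eq:gaussian-form}.

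\emph{Step 3: the key estimate.} This is the main point, and it uses the Banach-algebra structure. One route is the power series
\[
e^{-q\vartheta}-1=\sum_{k\ge1}\frac{(-q\vartheta)^k}{k!},
\]
whose $A^{s+2}$ norm is at most $e^{q\norm{\vartheta}_{A^{s+2}}}-1\le e^{q\Lambda_a/2}-1$ by Proposition~\ref{prop:algebra}. The bound $\norm{\vartheta}_{A^{s+2}}\le\Lambda_a/2$ is already used in the proof of Proposition~\ref{prop:deriv}. The right-hand side tends to $0$ as $q\to0$, uniformly in $\vartheta$.

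Alternatively, the Cauchy integral over $\mathcal C$ from the proof of Proposition~\ref{prop:frozen} gives
\[
e^{-q\vartheta}-1=\frac{1}{2\pi i}\int_{\mathcal C}\bigl(e^{-qz}-1\bigr)(z1-\vartheta)^{-1}\,dz .
\]
Combined with \eqref{eq:resolvent-bound}, this yields the bound
\[
C_{\mathrm{res}}\frac{|\mathcal C|}{2\pi}\sup_{z\in\mathcal C}|e^{-qz}-1|\to0 .
\]

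Combining Steps 1–3 yields \eqref{eq:diagonal-continuity}. The only delicate point is making sure the bound in Step 1 is uniform in $t$, so that the density reduction is legitimate. This uniformity is supplied by $C_M$, which is independent of $t,r$.
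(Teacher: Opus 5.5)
Your proof is correct and is essentially the paper's argument. Both rest on three ingredients: the uniform bound $\norm{M_{t,t+\varepsilon}(\cdot;\xi)-1}_{A^r}\le C_M+1$ from Proposition~\ref{prop:frozen}, the smallness of this quantity as $\varepsilon|\xi|^2\to0$, and Lemma~\ref{lem:symbol}. The difference is only in packaging. The paper uses the identity $e^{-q\vartheta}-1=-q\vartheta\int_0^1e^{-\theta q\vartheta}\,d\theta$ to obtain the single majorant $(C_M+1)\min\{\Lambda_a\varepsilon|\xi|^2,1\}$, valid for all $q$, and concludes in one step by dominated convergence. Your power-series bound $e^{q\Lambda_a/2}-1$ grows in $q$, which is why you need the frequency truncation and the density step.
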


\begin{proof}
See Appendix A.1.
\end{proof}

\begin{proposition}[Frozen parametrix and defect identity]
\label{prop:identity}
For every $H\in C([0,T];B^s)$,
\[
\Sigma H\in C([0,T];B^s),
\]
and
\begin{equation}
\bigl(\A(\mathcal P H)\bigr)(t;x)
=
H(t;x)+(\Sigma H)(t;x),
\qquad
(\mathcal P H)(T;x)=0.
\label{eq:green-identity}
\end{equation}
\end{proposition}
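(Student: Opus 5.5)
The plan is to verify the identity pointwise by differentiating under the integral sign in the definition \eqref{eq:parametrix}, using the symbol identity \eqref{eq:adjoint-identity} for the integrand and Lemma~\ref{lem:diagonal} for the boundary term at $r=t$. The terminal condition $(\mathcal P H)(T;x)=0$ is immediate, since the $r$-integral is over an empty interval.

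\textbf{Continuity of $\Sigma H$.} Since the kernel $(r-t)^{-1/2}$ is only weakly singular, I would first substitute $r=t+\tau$ and write
\[
(\Sigma H)(t)=\int_0^{T-t}\mathcal R_{t,t+\tau}[H(t+\tau)]\,d\tau .
\]
Fix $t_0$. I would cut off $\tau\in(0,\delta)$, where \eqref{eq:defect-bound} bounds the contribution by $2C_\Sigma\delta^{1/2}\sup_r\|H(r)\|_{B^s}$, uniformly in $t$. On $\tau\ge\delta$, the map $(t,\tau)\mapsto \mathcal R_{t,t+\tau}$ is strongly continuous on $B^s$. This follows from Lemma~\ref{lem:symbol} together with dominated convergence on the Fourier side. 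The symbol $R_{t,r}(\cdot;\xi)$ depends continuously on $(t,r)$ in $A^s$ for each fixed $\xi$: this uses continuity of $a$ in $A^{s+2}$, and for $M$ the holomorphic calculus formula with the resolvent continuous in $\vartheta\in\mathcal K$. The domination is supplied by the uniform bound \eqref{eq:defect-symbol-bound}. Combined with continuity of $H$, this gives continuity of the truncated integral, and letting $\delta\to0$ proves $\Sigma H\in C([0,T];B^s)$.

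\textbf{The time derivative.} Next I would compute $\partial_t(\mathcal P H)(t)$. For $\varepsilon>0$, set
\[
(\mathcal P_\varepsilon H)(t)=\int_{t+\varepsilon}^{T}P_{t,r}[H(r)]\,dr
\]
for $t\le T-\varepsilon$. Differentiating in $t$ gives two terms. The boundary term is $-P_{t,t+\varepsilon}[H(t+\varepsilon)]$. The interior term is $\int_{t+\varepsilon}^T\partial_tP_{t,r}[H(r)]\,dr$, where
\[
\partial_t M_{t,r}=\tfrac12\,\xi^\top a(t;x)\xi\,M_{t,r}
\]
is smooth and bounded in $A^s$ by $C|\xi|^2e^{-\lambda q/4}$, hence integrable against $|\hat H|$ once $r-t\ge\varepsilon$. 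In the same regime, $D_x^2$ can be passed inside the $\xi$-integral: the derivatives of $e^{ix\cdot\xi}M_{t,r}$ are controlled by Proposition~\ref{prop:deriv} and by Gaussian factors $|\xi|^2e^{-\lambda q/4}$, which are bounded for $r-t\ge\varepsilon$. Using \eqref{eq:adjoint-identity} then yields
\[
\A(\mathcal P_\varepsilon H)(t)=P_{t,t+\varepsilon}[H(t+\varepsilon)]+\int_{t+\varepsilon}^T\mathcal R_{t,r}[H(r)]\,dr .
\]

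\textbf{Passage to the limit.} As $\varepsilon\to0$, the second term converges to $\Sigma H$ uniformly in $t$ in $B^s$, by \eqref{eq:sigma-bochner} applied to the tail. The first term converges to $H(t)$ uniformly: write
\[
P_{t,t+\varepsilon}[H(t+\varepsilon)]-H(t)=P_{t,t+\varepsilon}[H(t+\varepsilon)-H(t)]+(P_{t,t+\varepsilon}-I)H(t).
\]
The first piece is controlled by \eqref{eq:pt-bound} and uniform continuity of $H$. For the second, Lemma~\ref{lem:diagonal} applies; its uniformity in $h$ over the compact set $\{H(t)\}$ follows from the uniform bound $C_P$ and an $\varepsilon/3$ argument. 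Meanwhile $\mathcal P_\varepsilon H\to\mathcal P H$ in $B^{s+1}$ uniformly, by \eqref{eq:pt-smoothing} on the short interval. Since $B^{s+1}\hookrightarrow B^{s-1}$ and $D^2_x:B^{s+1}\to B^{s-1}$ is bounded, $L_t\mathcal P_\varepsilon H\to L_t\mathcal P H$ uniformly. Hence $\partial_t\mathcal P_\varepsilon H$ converges uniformly in $C([0,T];B^{s-1})$. A standard closedness argument for derivatives of uniformly convergent Banach-valued functions then shows that $\mathcal P H\in C^1([0,T];B^{s-1})$ with the limit derivative, which is \eqref{eq:green-identity}. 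Pointwise identities follow from $B^{s-1}\hookrightarrow C_0$, as $s>2$.

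The main obstacle is the boundary/diagonal term: showing $P_{t,t+\varepsilon}[H(t+\varepsilon)]\to H(t)$ uniformly in $t$. A secondary issue is justifying strong continuity of $(t,r)\mapsto\mathcal R_{t,r}$ from the $A^s$-continuity of the symbols. For the former, I would first try reducing everything to Lemma~\ref{lem:diagonal} plus compactness, as described.
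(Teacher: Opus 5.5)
Your proposal is correct and follows essentially the same route as the paper. The shared steps are: off-diagonal strong continuity of $\mathcal R_{t,r}$ via Lemma~\ref{lem:symbol} and dominated convergence, a $2C_\Sigma\sqrt\delta$ cutoff near the diagonal, the truncated identity \eqref{eq:epsilon-identity}, and Lemma~\ref{lem:diagonal} together with compactness of $H([0,T])$ for the boundary term. The only difference is the final passage to the limit: you use uniform convergence of the Banach-valued derivatives, which gives $C^1$ regularity of $\mathcal P H$ with values in $B^{s-1}$ directly, whereas the paper passes to the limit in the sense of distributions. Because $\mathcal P_\varepsilon H$ is defined only on $[0,T-\varepsilon]$, run that argument on each $[0,T-\eta]$ and extend to $t=T$ by the continuity of $\mathcal PH$, $H$ and $\Sigma H$ there.
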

\begin{proof}
Estimate \eqref{eq:sigma-bochner} follows immediately from
\eqref{eq:defect-bound}.
We give the continuity argument explicitly because it is also used
below when the truncated parametrix is differentiated.  For every
$\delta>0$, the maps
\[
(t,r)\longmapsto M_{t,r}(\cdot;\xi),\qquad
(t,r)\longmapsto R_{t,r}(\cdot;\xi)
\]
are continuous in $A^{s+2}$ and $A^s$, respectively, on
$\{(t,r):0\le t\le r-\delta\le T-\delta\}$.  Indeed, this follows from
the continuity of $a$ in time, continuity of the Bochner averages in
\eqref{eq:theta}, and continuity of multiplication and the exponential
map in the Banach algebra $A^{s+2}$.  Lemma~\ref{lem:symbol} and
dominated convergence give strong continuity of the fixed-input operators
away from the diagonal. For varying inputs, use
$T_nh_n-Th=T_n(h_n-h)+(T_n-T)h$ and the uniform operator bounds.
Time integration then gives continuity of the part of $\Sigma H$ with
$r-t\ge\delta$.  The complementary part is bounded,
uniformly in $t$, by
\[
C_\Sigma\norm{H}_{C([0,T];B^s)}
\int_t^{\min\{T,t+\delta\}}(r-t)^{-1/2}\,dr
\le 2C_\Sigma\sqrt\delta\,
\norm{H}_{C([0,T];B^s)}.
\]
Letting $\delta\downarrow0$ proves $\Sigma H\in C([0,T];B^s)$.
The same split, using \eqref{eq:pt-smoothing}, proves
$\mathcal P H\in C([0,T];B^{s+1})$: away from the diagonal, the
Fourier-symbol estimate and Gaussian decay give continuity in $B^{s+1}$;
the remaining integral has norm at most
$2C_P\sqrt\delta\,\norm{H}_{C([0,T];B^s)}$, uniformly in $t$.
For $\varepsilon>0$ and $0\le t\le T-\varepsilon$, define
\[
(\mathcal P_\varepsilon H)(t)
:=
\int_{t+\varepsilon}^T P_{t,r}[H(r)]\,dr,
\qquad
(\Sigma_\varepsilon H)(t)
:=
\int_{t+\varepsilon}^T
\mathcal R_{t,r}[H(r)]\,dr.
\]
Since $r-t\ge\varepsilon$, the preceding off-diagonal
continuity and the Gaussian symbol estimates provide an integrable
$A^s$-majorant for the time derivative and the required spatial
derivatives.  Hence differentiation under the Bochner integral is
legitimate. Using
\eqref{eq:adjoint-identity} and the Leibniz rule gives
\begin{equation}
\A(\mathcal P_\varepsilon H)(t)
=
P_{t,t+\varepsilon}[H(t+\varepsilon)]
+
(\Sigma_\varepsilon H)(t).
\label{eq:epsilon-identity}
\end{equation}
By \eqref{eq:pt-smoothing},
\[
\begin{aligned}
\norm{(\mathcal P-\mathcal P_\varepsilon)H(t)}_{B^{s+1}}
&\le
C_P\int_t^{t+\varepsilon}
(r-t)^{-1/2}\norm{H(r)}_{B^s}\,dr \\
&\le
2C_P\sqrt{\varepsilon}\,
\norm{H}_{C([0,T];B^s)},
\end{aligned}
\]
and similarly, by \eqref{eq:defect-bound},
\[
\norm{(\Sigma-\Sigma_\varepsilon)H(t)}_{B^s}
\le
2C_\Sigma\sqrt{\varepsilon}\,
\norm{H}_{C([0,T];B^s)}.
\]
Hence
\[
\mathcal P_\varepsilon H\to\mathcal P H
\quad\text{in }B^{s+1},
\qquad
\Sigma_\varepsilon H\to\Sigma H
\quad\text{in }B^s,
\]
uniformly for $0\le t\le T-\varepsilon$.
For the boundary term,
\[
\norm{
P_{t,t+\varepsilon}H(t+\varepsilon)-H(t)
}_{B^s}
\le
C_P\norm{H(t+\varepsilon)-H(t)}_{B^s} +\norm{(P_{t,t+\varepsilon}-I)H(t)}_{B^s}.
\]
The first term tends to zero by the uniform continuity of $H$.
The second tends to zero by Lemma~\ref{lem:diagonal}; the convergence
is uniform for $H(t)$ because $H([0,T])$ is compact in $B^s$ and
\eqref{eq:pt-bound} gives a uniform bound for $P_{t,t+\varepsilon}$.
Thus
\(
P_{t,t+\varepsilon}H(t+\varepsilon)\to H(t)
\text{ in }B^s.
\)
Passing to the limit in \eqref{eq:epsilon-identity} in the sense of
distributions yields
\[
\A(\mathcal P H)=H+\Sigma H.
\]
Finally, \eqref{eq:parametrix-bochner} gives
\[
\norm{(\mathcal P H)(t;x)}_{B^{s+1}}
\le
2C_P\sqrt{T-t}\,
\norm{H}_{C([0,T];B^s)}
\to0
\]
as $t\uparrow T$, and hence $(\mathcal P H)(T;x)=0$.
\end{proof}
\subsection{Volterra Inversion and the Green Operator}
For locally integrable kernels on $(0,\infty)$, define
\(
(f*g)(\tau):=\int_0^\tau f(\tau-r)g(r)\,dr.
\)
Let
\(
k(\tau):=\tau^{-1/2},
\kappa_1:=k,
\kappa_n:=k*\kappa_{n-1}, n\ge2.
\)
A standard Beta-function computation gives
\begin{equation}
\kappa_n(\tau)
=
\frac{\pi^{n/2}}{\Gamma(n/2)}
\tau^{n/2-1},
\qquad \tau>0.
\label{eq:volterra-kappa}
\end{equation}
In view of \eqref{eq:sigma-bochner}, repeated application of $\Sigma$
is therefore controlled by the kernels $C_\Sigma^n\kappa_n$.

\begin{proposition}[Zero-terminal Green operator]\label{prop:green}
\begin{equation}
(I+\Sigma)^{-1}=\sum_{n=0}^{\infty}(-\Sigma)^n\label{eq:resolvent-series}
\end{equation}
with absolute convergence in operator norm. The operator
\(
G:=\mathcal{P}(I+\Sigma)^{-1}
\)
satisfies, for every $H\in C([0,T];B^s)$,
\[
\bigl(\A(GH)\bigr)(t;x)=H(t;x),\qquad (GH)(T;x)=0,
\]
and preserves the pointwise half-order estimate
\begin{equation}
\norm{GH(t)}_{B^{s+1}}\le K_G\int_t^T(r-t)^{-1/2}\norm{H(r)}_{B^s}\,dr\label{eq:green-bound}
\end{equation}
where
\[
K_G:=\sum_{n=0}^{\infty}C_PC_\Sigma^n\,\frac{\pi^{(n+1)/2}\bar T^{n/2}}{\Gamma((n+1)/2)}.
\]
\end{proposition}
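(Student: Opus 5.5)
The plan is to treat $\Sigma$ as a Volterra operator with weakly singular kernel on the Banach space $X:=C([0,T];B^s)$, and to iterate the pointwise bound \eqref{eq:sigma-bochner}.

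\textbf{Step 1: iterated kernel bound.} By induction on $n$ I will show
\[
\norm{(\Sigma^nH)(t)}_{B^s}\le C_\Sigma^n\int_t^T\kappa_n(r-t)\norm{H(r)}_{B^s}\,dr .
\]
The case $n=1$ is \eqref{eq:sigma-bochner}. For the inductive step, apply \eqref{eq:sigma-bochner} to $\Sigma^{n-1}H$, which lies in $X$ by Proposition~\ref{prop:identity}. Then exchange the order of integration with Tonelli, and use the identity $\int_t^{r}(\rho-t)^{-1/2}\kappa_{n-1}(r-\rho)\,d\rho=\kappa_n(r-t)$.

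\textbf{Step 2: Neumann series.} Combining Step 1 with \eqref{eq:volterra-kappa} gives
\[
\norm{\Sigma^n}_{X\to X}\le C_\Sigma^n\int_0^{\bar T}\kappa_n=C_\Sigma^n\frac{\pi^{n/2}\bar T^{n/2}}{\Gamma(n/2+1)}.
\]
This sequence is summable, since $\Gamma$ grows superexponentially, so \eqref{eq:resolvent-series} converges absolutely in operator norm on $X$. The identities $(I+\Sigma)S=S(I+\Sigma)=I$ for the sum $S$ follow from the usual telescoping argument.

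\textbf{Step 3: Green identity.} Set $\Phi:=(I+\Sigma)^{-1}H\in X$. Proposition~\ref{prop:identity} applied to $\Phi$ gives
\[
\A(\mathcal P\Phi)=\Phi+\Sigma\Phi=H,\qquad (\mathcal P\Phi)(T)=0.
\]
This is exactly the statement for $G=\mathcal P(I+\Sigma)^{-1}$.

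\textbf{Step 4: half-order estimate.} Write $\Phi=\sum_n(-\Sigma)^nH$, so that
\[
\norm{GH(t)}_{B^{s+1}}\le\sum_n\norm{\mathcal P\Sigma^nH(t)}_{B^{s+1}}.
\]
Apply \eqref{eq:parametrix-bochner} and then the Step 1 bound, which yields
\[
C_PC_\Sigma^n\int_t^T\!\int_r^T(r-t)^{-1/2}\kappa_n(\rho-r)\norm{H(\rho)}\,d\rho\,dr
=C_PC_\Sigma^n\int_t^T\kappa_{n+1}(\rho-t)\norm{H(\rho)}\,d\rho .
\]
Then bound
\[
\kappa_{n+1}(\tau)=\frac{\pi^{(n+1)/2}}{\Gamma((n+1)/2)}\tau^{n/2}\tau^{-1/2}\le\frac{\pi^{(n+1)/2}\bar T^{n/2}}{\Gamma((n+1)/2)}\tau^{-1/2}.
\]
Summing over $n$ gives \eqref{eq:green-bound} with exactly the stated $K_G$, which is finite by the ratio test. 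Interchanging the sum with the $B^{s+1}$-norm is justified because the series converges in $C([0,T];B^{s+1})$, which follows from the same bounds together with the continuity of $\mathcal P$ established in the proof of Proposition~\ref{prop:identity}.

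\textbf{Main obstacle.} I expect the only real care to be needed in the measurability and Tonelli justifications of Step 1, and in confirming that every iterate stays in $X$ so that \eqref{eq:sigma-bochner} can be reapplied. Both points are supplied by the continuity statement of Proposition~\ref{prop:identity}.
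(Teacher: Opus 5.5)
Your proposal is correct and follows essentially the same route as the paper. Both arguments iterate \eqref{eq:sigma-bochner} with Tonelli to get the $\kappa_n$ kernel bound, use the Gamma-decaying operator norms to obtain a two-sided Neumann inverse, apply Proposition~\ref{prop:identity} to $(I+\Sigma)^{-1}H$, and bound $\mathcal P\Sigma^nH$ term by term. The only cosmetic difference is that you compose kernels to reach $\kappa_{n+1}$ directly, whereas the paper evaluates the same Beta integral $B(n/2,1/2)$; both give the identical constants $c_n$ and hence the stated $K_G$.
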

\begin{proof}
Iterating \eqref{eq:sigma-bochner}, applying Tonelli's theorem to the nonnegative norm majorant on the ordered time simplex, and using \eqref{eq:volterra-kappa} gives, for $n\ge 1$,
\begin{equation}
\norm{\Sigma^nH(t)}_{B^s}\le\frac{C_\Sigma^n\pi^{n/2}}{\Gamma(n/2)}\int_t^T(r-t)^{n/2-1}\norm{H(r)}_{B^s}\,dr.\label{eq:sigma-power}
\end{equation}
From \eqref{eq:sigma-power},
\[
\begin{aligned}
\sup_{t\in[0,T]}
\norm{\Sigma^nH(t)}_{B^s}
&\le
\frac{C_\Sigma^n\pi^{n/2}}{\Gamma(n/2)}
\norm{H}_{C([0,T];B^s)}
\sup_{t\in[0,T]}
\int_t^T(r-t)^{n/2-1}\,dr\\
&=
\frac{C_\Sigma^n\pi^{n/2}T^{n/2}}
{\Gamma(n/2+1)}
\norm{H}_{C([0,T];B^s)}.
\end{aligned}
\]
Therefore
\begin{equation}
\norm{\Sigma^n}_{C([0,T];B^s)\to C([0,T];B^s)}
\le
\frac{(C_\Sigma\sqrt{\pi T})^n}
{\Gamma(n/2+1)}.
\label{eq:sigma-norm}
\end{equation}
The Gamma denominator implies absolute convergence of the Neumann--Volterra series for every finite $T$ and every $C_\Sigma$; in particular, no condition such as $C_\Sigma T<1$ is required. To verify that the sum is the inverse, let
\(
S_N:=\sum_{n=0}^{N}(-\Sigma)^n, (I+\Sigma)S_N=S_N(I+\Sigma)=I+(-1)^N\Sigma^{N+1}.
\)
By \eqref{eq:sigma-norm}, $\norm{\Sigma^{N+1}}\to 0$, proving \eqref{eq:resolvent-series} as a two-sided operator inverse.
Set $Y:=(I+\Sigma)^{-1}H$. Proposition~\ref{prop:identity} gives
\[
\begin{aligned}
\A(GH)&=\A(\mathcal PY)=Y+\Sigma Y=H,\\
(GH)(T)&=0.
\end{aligned}
\]
It remains to retain the pointwise half-order kernel after summing the resolvent series. For $n\ge 1$, the time-kernel majorant is nonnegative, so Tonelli's theorem permits the exchange of the $r$- and $s$-integrals; the Beta integral below then shows that the exchanged integral is finite. Together with \eqref{eq:sigma-power}, this yields
\begin{equation}
\begin{aligned}
\norm{\mathcal{P}\Sigma^nH(t)}_{B^{s+1}}
&\le C_PC_\Sigma^n\frac{\pi^{n/2}}{\Gamma(n/2)}B\Bigl(\frac n2,\frac12\Bigr)\int_t^T(s-t)^{(n-1)/2}\norm{H(s)}_{B^s}\,ds\\
&\le C_PC_\Sigma^n\frac{\pi^{(n+1)/2}\bar T^{n/2}}{\Gamma((n+1)/2)}\int_t^T(s-t)^{-1/2}\norm{H(s)}_{B^s}\,ds\\
&=:c_n\int_t^T(s-t)^{-1/2}\norm{H(s)}_{B^s}\,ds,\qquad
c_n:=C_PC_\Sigma^n\frac{\pi^{(n+1)/2}\bar T^{n/2}}{\Gamma((n+1)/2)}.
\end{aligned}\label{eq:green-series-bound}
\end{equation}
For $n=0$ the same estimate holds with $c_0:=C_P$. Moreover, $K_G=\sum_{n\ge 0}c_n<\infty$, and summing $GH=\sum_{n\ge 0}(-1)^n\mathcal{P}\Sigma^nH$ proves \eqref{eq:green-bound}.
\end{proof}
\subsection{Nonzero Terminal Data and the Terminal Propagator}
\begin{proposition}[Terminal propagator]\label{prop:propagator}
Let $\varphi\in B^{s+1}$. For $0\le t<T$, define
\[
q_\varphi(t):=P_{t,T}\varphi,\qquad
d_\varphi(t):=\A q_\varphi(t),\qquad
U(t;T)\varphi:=q_\varphi(t)-Gd_\varphi(t).
\]
Then $q_\varphi$ and $d_\varphi$ extend continuously to $[0,T]$ with
\(
\A[U(\cdot;T)\varphi]=0, U(T;T)\varphi=\varphi.
\)
Moreover,
\[
\norm{d_\varphi}_{C([0,T];B^s)}
\le C_{\mathrm{tdef}}\norm{\varphi}_{B^{s+1}},
\qquad
\norm{U(t;T)\varphi}_{B^{s+1}}
\le C_{\mathrm{ter}}\norm{\varphi}_{B^{s+1}},
\]
where
\(
C_{\mathrm{tdef}}
:=
C_{\mathrm{def}}
\left(
\frac{4}{e\lambda}
+\frac{64}{e^2\lambda^2}
\right),
C_{\mathrm{ter}}
:=
C_P+2K_GC_{\mathrm{tdef}}\sqrt{\bar T}.
\)
\end{proposition}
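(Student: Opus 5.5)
The plan is to mirror the source-parametrix argument of Proposition~\ref{prop:identity}, now with the terminal slice $r=T$ fixed instead of integrated. First I would compute $d_\varphi$ explicitly. For $0\le t<T$, $q_\varphi(t)(x)=\int e^{ix\cdot\xi}M_{t,T}(x;\xi)\hat\varphi(\xi)\,d\xi$. Since $r-t=T-t>0$, the Gaussian bound \eqref{eq:gaussian-decay} together with the derivative bounds \eqref{eq:first-derivative-bound}--\eqref{eq:second-derivative-bound} give integrable majorants. These justify differentiating under the integral once in $t$ and twice in $x$. Then \eqref{eq:adjoint-identity} yields
\[
d_\varphi(t)=\A q_\varphi(t)=\mathcal R_{t,T}\varphi=\int e^{ix\cdot\xi}R_{t,T}(x;\xi)\hat\varphi(\xi)\,d\xi .
\]

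The bound on $d_\varphi$ comes from the Fourier-symbol Lemma~\ref{lem:symbol}, using the gain of one derivative in $\varphi$ instead of the $(r-t)^{-1/2}$ singularity. By \eqref{eq:defect-symbol-bound},
\[
\norm{R_{t,T}(\cdot;\xi)}_{A^s}\le C_{\mathrm{def}}\bigl(|\xi|q+q+q^2\bigr)e^{-(\lambda/4)q}.
\]
I would bound $|\xi|qe^{-\lambda q/4}\le |\xi|\sup_q qe^{-\lambda q/4}$ and treat $(q+q^2)e^{-\lambda q/4}$ as before, each by $\frac{4}{e\lambda}+\frac{64}{e^2\lambda^2}$. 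This gives
\[
\norm{R_{t,T}(\cdot;\xi)}_{A^s}\le C_{\mathrm{tdef}}(1+|\xi|),
\]
and Lemma~\ref{lem:symbol} then yields $\norm{d_\varphi(t)}_{B^s}\le C_{\mathrm{tdef}}\norm{\varphi}_{B^{s+1}}$ uniformly in $t<T$. Similarly, \eqref{eq:pt-terminal} gives $\norm{q_\varphi(t)}_{B^{s+1}}\le C_P\norm{\varphi}_{B^{s+1}}$.

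Next comes the continuous extension to $t=T$. For $q_\varphi$, Lemma~\ref{lem:diagonal} with $r=s+1$ and $\varepsilon=T-t$ gives $q_\varphi(t)\to\varphi$ in $B^{s+1}$, so I set $q_\varphi(T)=\varphi$. For $d_\varphi$, I expect this to be the main obstacle: the symbol $R_{t,T}$ does not obviously converge as $t\uparrow T$, although each factor $q\to0$ pointwise in $\xi$. My first attempt would be dominated convergence in the symbol estimate. The integrand $\norm{R_{t,T}(\cdot;\xi)}_{A^s}|\hat\varphi(\xi)|(1+|\xi|)^s$ is dominated by $C_{\mathrm{tdef}}|\hat\varphi|(1+|\xi|)^{s+1}\in L^1$, and it tends to $0$ pointwise since $q=(T-t)|\xi|^2\to0$. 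Hence $d_\varphi(t)\to0$ in $B^s$, and I set $d_\varphi(T)=0$. Continuity on $[0,T)$ follows from off-diagonal continuity of $(t,r)\mapsto R_{t,r}$ in $A^s$, as in the proof of Proposition~\ref{prop:identity}, plus dominated convergence. So $d_\varphi\in C([0,T];B^s)$.

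Finally, Proposition~\ref{prop:green} applies to $H=d_\varphi$. It gives $\A(Gd_\varphi)=d_\varphi$ and $(Gd_\varphi)(T)=0$, so $\A U(\cdot;T)\varphi=d_\varphi-d_\varphi=0$ and $U(T;T)\varphi=q_\varphi(T)=\varphi$. The norm bound follows from \eqref{eq:green-bound}:
\[
\norm{Gd_\varphi(t)}_{B^{s+1}}\le K_GC_{\mathrm{tdef}}\norm{\varphi}_{B^{s+1}}\int_t^T(r-t)^{-1/2}\,dr\le 2K_GC_{\mathrm{tdef}}\sqrt{\bar T}\,\norm{\varphi}_{B^{s+1}}.
\]
Adding the bound for $q_\varphi$ gives $C_{\mathrm{ter}}$.
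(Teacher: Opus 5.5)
Your proposal is correct and follows the paper's proof essentially step by step. The steps match: the identity $d_\varphi(t)=\mathcal R_{t,T}\varphi$ from \eqref{eq:adjoint-identity}; the symbol bound $\norm{R_{t,T}(\cdot;\xi)}_{A^s}\le C_{\mathrm{tdef}}(1+|\xi|)$ fed into Lemma~\ref{lem:symbol}; dominated convergence to get $d_\varphi(t)\to0$ in $B^s$ as $t\uparrow T$; and Proposition~\ref{prop:green} with the $2\sqrt{T-t}$ integral, which yields $U(T;T)\varphi=\varphi$ and the constant $C_{\mathrm{ter}}$. You are slightly more explicit than the paper in justifying differentiation under the integral for $t<T$ and continuity of $d_\varphi$ on $[0,T)$.
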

\begin{proof}
Lemma~\ref{lem:diagonal} and \eqref{eq:pt-terminal} imply $q_\varphi\in C([0,T];B^{s+1})$. For $t<T$, \eqref{eq:adjoint-identity} gives
\[
d_\varphi(t;x)=\int_{\R^d}e^{ix\cdot\xi}R_{t,T}(x;\xi)\hat\varphi(\xi)\,d\xi.
\]
With $q=(T-t)|\xi|^2$, \eqref{eq:defect-symbol-bound} gives
\[
\norm{R_{t,T}(\cdot;\xi)}_{A^s}\le C_{\mathrm{def}}\Bigl(\frac{4}{e\lambda}|\xi|+\frac{4}{e\lambda}+\frac{64}{e^2\lambda^2}\Bigr)\le C_{\mathrm{tdef}}(1+|\xi|).
\]
Lemma~\ref{lem:symbol} therefore gives
\[
\norm{d_\varphi(t)}_{B^s}\le C_{\mathrm{tdef}}\norm{\varphi}_{B^{s+1}}.
\]
Moreover, $R_{t,T}(\cdot;\xi)\to 0$ in $A^s$ as $t\uparrow T$; dominated convergence then gives $d_\varphi(t)\to 0$ in $B^s$, hence $d_\varphi\in C([0,T];B^s)$.
Since
\[
(\A q_\varphi)(t;x)=d_\varphi(t;x),\qquad \bigl(\A(Gd_\varphi)\bigr)(t;x)=d_\varphi(t;x),
\]
we obtain $\bigl(\A[U(\cdot;T)\varphi]\bigr)(t;x)=0$. Also, $P_{t,T}\varphi\to\varphi$ in $B^{s+1}$ and
\[
\norm{Gd_\varphi(t)}_{B^{s+1}}\le 2K_GC_{\mathrm{tdef}}\sqrt{T-t}\,\norm{\varphi}_{B^{s+1}}\to 0,
\]
so $\bigl(U(T;T)\varphi\bigr)(x)=\varphi(x)$. The same estimate gives
\[
\norm{U(t;T)\varphi}_{B^{s+1}}\le\bigl(C_P+2K_GC_{\mathrm{tdef}}\sqrt{T-t}\bigr)\norm{\varphi}_{B^{s+1}}\le C_{\mathrm{ter}}\norm{\varphi}_{B^{s+1}}.
\]
\end{proof}
\subsection{Augmented Green and Terminal Operators}
\begin{proposition}[Augmented Green and terminal operators]\label{prop:augmented}
Let $0<T\le\bar T$, $H\in C([0,T];A^s)$, and $\varphi\in A^{s+1}$. Write
\(
H(t;x)=H_\infty(t)+H_0(t;x),\varphi(x)=\varphi_\infty+\varphi_0(x),
\)
with
\(
H_\infty\in C([0,T]), H_0\in C([0,T];B^s), \varphi_\infty\in\R, \varphi_0\in B^{s+1}.
\)
Define
\[
\begin{aligned}
(G_AH)(t;x)&:=\int_t^TH_\infty(r)\,dr+(GH_0)(t;x),\\
\bigl(U_A(t;T)\varphi\bigr)(x)&:=\varphi_\infty+\bigl(U(t;T)\varphi_0\bigr)(x).
\end{aligned}
\]
Then
\(
G_AH,\ U_A(\cdot;T)\varphi\in C\bigl([0,T];A^{s+1}\bigr)\cap C^1\bigl([0,T];A^{s-1}\bigr),
\)
and
\[
\bigl(\A(G_AH)\bigr)(t;x)=H(t;x),\qquad (G_AH)(T;x)=0,
\]
\[
\bigl(\A[U_A(\cdot;T)\varphi]\bigr)(t;x)=0,\qquad \bigl(U_A(T;T)\varphi\bigr)(x)=\varphi(x).
\]
Moreover,
\[
\norm{G_AH(t)}_{A^{s+1}}\le K_{G,A}\int_t^T(r-t)^{-1/2}\norm{H(r)}_{A^s}\,dr,
\]
\[
\norm{U_A(t;T)\varphi}_{A^{s+1}}\le C_{\mathrm{ter},A}\norm{\varphi}_{A^{s+1}},
\]
where
\(
K_{G,A}:=K_G+\sqrt{\bar T}, C_{\mathrm{ter},A}:=\max\{1,C_{\mathrm{ter}}\}.
\)
\end{proposition}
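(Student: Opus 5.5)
The plan is to split every object into its constant and Barron components, treat the two parts separately, and then reassemble the results using the direct-sum norm from Remark~\ref{rem:unique}. The key fact is that the operator $\A$ annihilates spatial derivatives of constants: for a function $c(t)$ depending only on time, $\A c=-c'(t)$, because $D_x^2c=0$. Writing $\Phi(t):=\int_t^TH_\infty(r)\,dr$, we get $\A\Phi=H_\infty$ and $\Phi(T)=0$. By Proposition~\ref{prop:green}, $\A(GH_0)=H_0$ and $(GH_0)(T)=0$. Adding these gives $\A(G_AH)=H$ with zero terminal value. In the same way, $\A\varphi_\infty=0$, and Proposition~\ref{prop:propagator} supplies the equation and terminal value for $U(\cdot;T)\varphi_0$.

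For the norm bounds I would use $\norm{c+h}_{A^{s+1}}=|c|+\norm{h}_{B^{s+1}}$. The constant part is handled by
\[
|\Phi(t)|\le\int_t^T|H_\infty(r)|\,dr\le\sqrt{\bar T}\int_t^T(r-t)^{-1/2}|H_\infty(r)|\,dr,
\]
which uses $1\le\sqrt{\bar T}(r-t)^{-1/2}$. The Barron part is controlled by \eqref{eq:green-bound}. Since $|H_\infty(r)|\le\norm{H(r)}_{A^s}$ and $\norm{H_0(r)}_{B^s}\le\norm{H(r)}_{A^s}$, summing the two estimates gives the constant $K_G+\sqrt{\bar T}$. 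For the terminal operator, $|\varphi_\infty|+C_{\mathrm{ter}}\norm{\varphi_0}_{B^{s+1}}\le\max\{1,C_{\mathrm{ter}}\}\norm{\varphi}_{A^{s+1}}$. One preliminary point must be checked: $H\mapsto(H_\infty,H_0)$ is continuous into $C([0,T])\times C([0,T];B^s)$. This holds because the projections of the direct sum have norm at most $1$.

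The regularity claim $C([0,T];A^{s+1})\cap C^1([0,T];A^{s-1})$ is the step requiring actual work. The constant parts are harmless: $\Phi\in C^1$, and $\varphi_\infty$ does not depend on time. Continuity in $B^{s+1}$ of $GH_0=\mathcal P Y$ with $Y=(I+\Sigma)^{-1}H_0\in C([0,T];B^s)$ was shown in the proof of Proposition~\ref{prop:identity}. Continuity of $U(\cdot;T)\varphi_0=q_\varphi-Gd_\varphi$ follows from Proposition~\ref{prop:propagator} together with the same fact applied to $d_\varphi$.

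For $C^1$ into $B^{s-1}$, I would not differentiate the parametrix directly. Instead I would use the equation: $\partial_tv=-\tfrac12a:D_x^2v-\A v$. Here $\A v\in C([0,T];B^s)$, and $v\in C([0,T];B^{s+1})$. Proposition~\ref{prop:algebra} gives $D_x^2v\in C([0,T];B^{s-1})$, and the module estimate with $a\in C([0,T];A^{s+2})$ shows that $a:D_x^2v$ is continuous into $B^{s-1}$. The main obstacle is to upgrade this distributional identity to a genuine Banach-space derivative. I would do this by integrating the identity $\A v=H$ in time: it gives
\[
v(t)=v(T)+\int_t^T\bigl(\tfrac12a:D_x^2v+H\bigr)(r)\,dr
\]
as an identity of tempered distributions. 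The integrand is a continuous $B^{s-1}$-valued function, so the Bochner integral exists and agrees with the distributional one. The fundamental theorem of calculus for Bochner integrals then yields $v\in C^1([0,T];B^{s-1})$, with one-sided derivatives at the endpoints. If needed, I would justify the integrated identity by passing to the limit $\varepsilon\to0$ in \eqref{eq:epsilon-identity}, where the truncated objects are classically differentiable.
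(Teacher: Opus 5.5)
Your proposal is correct and follows essentially the same route as the paper. You split into constant and Barron components, handle the constant part by the scalar terminal ODE and the Barron part by Propositions~\ref{prop:green} and~\ref{prop:propagator}, obtain $C^1([0,T];A^{s-1})$ from the distributional identity $\partial_t v=-\tfrac12 a:D_x^2v-\A v$ together with the Banach-valued fundamental theorem of calculus, and get $K_{G,A}=K_G+\sqrt{\bar T}$ from $1\le\sqrt{\bar T}(r-t)^{-1/2}$. Your extra remarks only make explicit steps the paper leaves implicit: the continuity of $\mathcal P Y$ via the diagonal split in the proof of Proposition~\ref{prop:identity}, and the justification of the integrated identity through the $\varepsilon$-truncation.
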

\begin{proof}
The nonconstant components follow from Propositions~\ref{prop:green}
and \ref{prop:propagator}; the constant components satisfy the scalar
terminal ODE. In the sense of distributions,
\[
\partial_t(G_AH)=-\tfrac12a:D^2(G_AH)-H\in C([0,T];A^{s-1}),
\]
and the corresponding identity holds for $U_A\varphi$ without the source.
The Banach-valued fundamental theorem of calculus gives the stated
$C^1$ regularity, with one-sided derivatives at the endpoints. Finally,
\[
\begin{aligned}
\|G_AH(t)\|_{A^{s+1}}
&\le\int_t^T|H_\infty(r)|\,dr
+K_G\int_t^T(r-t)^{-1/2}\|H_0(r)\|_{B^s}\,dr,\\
\|U_A(t;T)\varphi\|_{A^{s+1}}
&\le|\varphi_\infty|+C_{\mathrm{ter}}\|\varphi_0\|_{B^{s+1}}.
\end{aligned}
\]
Use $1\le\sqrt{\bar T}(r-t)^{-1/2}$ in the first bound.
\end{proof}
\begin{proof}[Proof of Theorem~\ref{thm:linear-wellposedness}]
Proposition~\ref{prop:augmented} gives the representation, regularity, and
bounds. The difference of two solutions with the same data is bounded
and classical and satisfies the homogeneous equation with zero terminal
data. Lemma~\ref{lem:uniqueness}, with $b=0$, gives uniqueness.
\end{proof}
Theorem~\ref{thm:linear-wellposedness} is the linear solver used below.

\section{Successive Approximation for the HJB Equation and Optimal Feedback}
\subsection{The Nonlinear Hamiltonian and the Solution Map}
If $V\in A^{s+1}$, then
\[
\partial_iV\in B^s,\qquad i=1,\ldots,d.
\]
Hence
\[
f\cdot\nabla V,\quad (\nabla V)^{\top}Q\nabla V\in B^s,\qquad F(t;\nabla V;V)\in A^s.
\]
Thus the nonlinear term lies in the source space of Theorem~\ref{thm:linear-wellposedness}.
\begin{lemma}[Nonlinear and feedback estimates]\label{lem:nonlinear}
For every $M>0$ there exist $B_M,L_M,C_u\ge0$, depending only on $M$ and the coefficient bounds, such that whenever $V,W\in C([0,T];A^{s+1})$ satisfy $\norm{V(t)}_{A^{s+1}},\norm{W(t)}_{A^{s+1}}\le M$, then
\begin{equation}
\norm{F(t;\nabla V;V)}_{A^s}\le B_M,\label{eq:nonlinear-bound}
\end{equation}
\begin{equation}
\norm{F(t;\nabla V;V)-F(t;\nabla W;W)}_{A^s}\le L_M\norm{V-W}_{A^{s+1}},\label{eq:nonlinear-lipschitz}
\end{equation}
\begin{equation}
\norm{u_V-u_W}_{B^s}\le C_u\norm{V-W}_{A^{s+1}},\qquad u_V:=-\frac12R^{-1}g^{\top}\nabla_xV,\label{eq:feedback-lipschitz}
\end{equation}
Here $C_R:=|R^{-1}|_1$ and, with coefficient norms in $C([0,\bar T];A^s)$,
\[
\begin{aligned}
B_M&=\|\ell\|+M(|\gamma|+\|f\|)+\tfrac14M^2\|Q\|,\\
L_M&=\|f\|+\tfrac12M\|Q\|+|\gamma|,\qquad
C_u=\tfrac12C_R\|g\|.
\end{aligned}
\]
Moreover, $F(t;\nabla V;V)\in A^s$, the Hamiltonian difference belongs to $A^s$, and $u_V\in B^s(\R^d;\R^m)$.
\end{lemma}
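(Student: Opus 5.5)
The plan is to fix $t$, decompose each term of $F$ into its constant and Barron parts, and apply the algebra, module and differentiation estimates of Proposition~\ref{prop:algebra}. Write $V(t)=V_\infty+V_0$ with $V_0\in B^{s+1}$. Then $\nabla V=\nabla V_0$, and the first-order differentiation bound with $r=s+1$ gives
\[
\|\partial_iV\|_{B^s}\le\|V\|_{A^{s+1}}\le M,
\]
so the gradient has no constant part. Since the vector norm is the $\ell^1$ norm over components, the componentwise module estimate $\|a_1h\|_{B^s}\le\|a_1\|_{A^s}\|h\|_{B^s}$ yields
\[
\|f\cdot\nabla V\|_{B^s}\le\|f\|\,M .
\]
For the quadratic term, I first bound $Q=gR^{-1}g^\top$ in $A^s$ by the algebra estimate; it suffices to treat $\|Q\|$ as a given coefficient norm, as the statement does. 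Then
\[
\|(\nabla V)^\top Q\nabla V\|_{B^s}\le\|Q\|\,M^2,
\]
because each entry is $\sum_{i,j}Q_{ij}\partial_iV\partial_jV$: the product estimate of Proposition~\ref{prop:basic} applies to $\partial_iV\partial_jV\in B^s$, the module estimate handles $Q_{ij}$, and summing gives at most $\|Q\|\,\|\nabla V\|^2$ with entrywise $\ell^1$ norms. Finally, $\|\ell\|_{A^s}\le\|\ell\|$, and
\[
\|\gamma V\|_{A^s}\le|\gamma|\,\|V\|_{A^{s+1}}\le|\gamma|M,
\]
using the embedding $A^{s+1}\hookrightarrow A^s$ with constant $1$. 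Adding these terms gives $B_M$ and shows that $F(t;\nabla V;V)\in A^s$.

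For the Lipschitz bound, subtract term by term. The linear pieces give $\|f\|\,\|V-W\|_{A^{s+1}}+|\gamma|\,\|V-W\|_{A^{s+1}}$. For the quadratic piece, use the polarization identity (valid since $Q$ is symmetric)
\[
p^\top Qp-q^\top Qq=(p-q)^\top Q(p+q).
\]
With $p=\nabla V$ and $q=\nabla W$, this gives
\[
\tfrac14\|Q\|\,\|\nabla(V-W)\|_{B^s}\bigl(\|\nabla V\|_{B^s}+\|\nabla W\|_{B^s}\bigr)\le\tfrac14\|Q\|\cdot 2M\,\|V-W\|_{A^{s+1}},
\]
which is $\tfrac12M\|Q\|\,\|V-W\|_{A^{s+1}}$ and reproduces $L_M$. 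The feedback estimate is again a module estimate: $u_V-u_W=-\tfrac12R^{-1}g^\top\nabla(V-W)$. Multiplying by the constant matrix $R^{-1}$ costs at most $C_R=|R^{-1}|_1$ in the entrywise $\ell^1$ norm, and $g^\top\in A^s$ acts on $\nabla(V-W)\in B^s$. This gives $C_u=\tfrac12C_R\|g\|$, and shows that $u_V\in B^s(\R^d;\R^m)$ because the gradient has no constant part.

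The only step needing care is the bookkeeping of matrix and vector norms. One must check that the entrywise $\ell^1$ conventions make the componentwise module estimates multiply without dimension-dependent factors; they do, because $\sum_{i}|\sum_jA_{ij}b_j|\le\sum_{i,j}|A_{ij}||b_j|\le|A|_1|b|_1$, and the same inequality holds at the level of Barron norms by the triangle inequality. One must also check that $Q$ is symmetric, so that the polarization identity is exact. Everything else is a direct application of Propositions~\ref{prop:basic} and~\ref{prop:algebra}, uniformly in $t\in[0,T]$.
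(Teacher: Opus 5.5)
Your proof is correct and is essentially the paper's argument. Both use componentwise module and algebra estimates driven by $\max_i\|\partial_iV\|_{B^s}\le\|V\|_{A^{s+1}}$. The only cosmetic difference is that you use the symmetric identity $p^\top Qp-q^\top Qq=(p-q)^\top Q(p+q)$, whereas the paper uses the split $(\nabla V-\nabla W)^\top Q\nabla V+(\nabla W)^\top Q(\nabla V-\nabla W)$, which does not need $Q=Q^\top$.

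One bookkeeping fix is needed in your displays and last paragraph. The gradient must be measured by $\max_i\|\partial_iV\|_{B^s}$, so the contraction is $|A|_1|b|_\infty$ rather than $|A|_1|b|_1$. The reason is that the $\ell^1$ vector Barron norm $\sum_i\|\partial_iV\|_{B^s}$ is in general only bounded by $\sqrt d\,\|V\|_{A^{s+1}}$. With that reading your constants are exactly $B_M$, $L_M$ and $C_u$.
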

\begin{proof}
Set $C_R:=|R^{-1}|_1$. The componentwise algebra estimate gives
\begin{equation}
Q:=gR^{-1}g^{\top}\in C([0,T];A^s),\qquad \norm{Q(t)}_{A^s}\le C_R\norm{g(t)}_{A^s}^2.\label{eq:q-regularity}
\end{equation}
Since
\(
\max_i\norm{\partial_iV}_{B^s}\le\norm{V}_{A^{s+1}},
\)
the module property gives the first estimate with
\[
B_M:=\norm{\ell}_{C([0,\bar T];A^s)}+M(\norm{f}_{C([0,\bar T];A^s)}+|\gamma|)+\frac{M^2}{4}\norm{Q}_{C([0,\bar T];A^s)}.
\]
In the difference of two Hamiltonians the running cost cancels, and
\[
(\nabla V)^{\top}Q\nabla V-(\nabla W)^{\top}Q\nabla W=(\nabla V-\nabla W)^{\top}Q\nabla V+(\nabla W)^{\top}Q(\nabla V-\nabla W).
\]
Finally, \eqref{eq:nonlinear-lipschitz} and \eqref{eq:feedback-lipschitz} hold with
\(
L_M:=\norm{f}_{L^\infty_tA^s_x}+\frac{M}{2}\norm{Q}_{L^\infty_tA^s_x}+|\gamma|,
\)
\( 
C_u:=\frac12\,C_R\norm{g}_{L^\infty_tA^s_x}.
\)
All contractions use the entrywise norms and introduce no additional dimension factor.
\end{proof}
Define the nonlinear value map
\begin{equation}
\mathcal{T}(V)(t):=U_A(t;T)\varphi+G_A\bigl[F(\cdot;\nabla V;V)\bigr](t).\label{eq:iteration-map}
\end{equation}
Theorem~\ref{thm:linear-wellposedness} and Lemma~\ref{lem:nonlinear} show that $\mathcal{T}$ is well defined on $C([0,T];A^{s+1})$ and that
\begin{equation}
\norm{\mathcal{T}(V)(t)}_{A^{s+1}}\le C_{\mathrm{ter},A}\norm{\varphi}_{A^{s+1}}+K_{G,A}\int_t^T(r-t)^{-1/2}\norm{F(r;\nabla V(r),V(r))}_{A^s}\,dr.\label{eq:iteration-bound}
\end{equation}
The iteration is $V^{n+1}=\mathcal{T}(V^n)$ with $V^0=0$. By Theorem~\ref{thm:linear-wellposedness}, this is equivalent to the terminal-value PDE iteration of Algorithm~\ref{alg:gradient-iteration}; in particular, every iterate is uniquely defined.
For $V,W$ in the same value-function ball, Proposition~\ref{prop:augmented} and \eqref{eq:nonlinear-lipschitz} give
\[
\begin{aligned}
&\norm{\mathcal{T}(V)(t)-\mathcal{T}(W)(t)}_{A^{s+1}}\\
&\le K_{G,A}\int_t^T(r-t)^{-1/2}
\norm{
F(r;\nabla V(r),V(r))
-
F(r;\nabla W(r),W(r))
}_{A^s}\,dr
\\
&\le
K_{G,A}L_M
\int_t^T(r-t)^{-1/2}
\norm{V(r)-W(r)}_{A^{s+1}}\,dr .
\end{aligned}
\]
\subsection{Short-horizon invariant ball}
Set
\(
A_\varphi:=C_{\mathrm{ter},A}\norm{\varphi}_{A^{s+1}}, M:=2A_\varphi+1,
\)
and denote by
\begin{equation}
\mathcal{V}_M(T):=\Bigl\{V\in C\bigl([0,T];A^{s+1}\bigr):
\norm{V}_{C([0,T];A^{s+1})}\le M\Bigr\}.\label{eq:ball}
\end{equation}
On the reference interval $[0,\bar T]$, define
\(
T_0:=\min\Bigl\{\bar T,\ \Bigl(\frac{A_\varphi+1}{2K_{G,A}B_M}\Bigr)^2\Bigr\}.
\)
If $B_M=0$, the second entry in the minimum is understood as $+\infty$.
\begin{proposition}[Short-horizon invariance and uniform iterate bound]\label{prop:invariance}
Let $A_\varphi$, $M$, $B_M$, and $T_0$ be defined above. For every $0<T\le T_0$ ,
\begin{equation}
\mathcal{T}\bigl(\mathcal{V}_M(T)\bigr)\subset\mathcal{V}_M(T),\label{eq:invariance-set}
\end{equation}
\end{proposition}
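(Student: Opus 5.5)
The argument is a direct a priori estimate on the value map $\mathcal{T}$ from \eqref{eq:iteration-map}. Fix $0<T\le T_0$ and $V\in\mathcal{V}_M(T)$, so that $\norm{V(r)}_{A^{s+1}}\le M$ for every $r\in[0,T]$. First we check that $\mathcal{T}(V)$ lies in the right space. By Lemma~\ref{lem:nonlinear}, the source $r\mapsto F(r;\nabla V(r),V(r))$ belongs to $A^s$ pointwise and is bounded by $B_M$. Continuity in time follows from the continuity of $f,\ell,Q$ in $C([0,T];A^s)$, the continuity of $V$ in $A^{s+1}$, and the algebra and module estimates of Proposition~\ref{prop:algebra}; together these give $F(\cdot;\nabla V;V)\in C([0,T];A^s)$. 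Theorem~\ref{thm:linear-wellposedness} then shows that $\mathcal{T}(V)\in C([0,T];A^{s+1})$.

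Next we bound the norm. Insert \eqref{eq:nonlinear-bound} into \eqref{eq:iteration-bound}:
\[
\norm{\mathcal{T}(V)(t)}_{A^{s+1}}\le A_\varphi+K_{G,A}B_M\int_t^T(r-t)^{-1/2}\,dr=A_\varphi+2K_{G,A}B_M\sqrt{T-t}\le A_\varphi+2K_{G,A}B_M\sqrt{T_0}.
\]
The constants $K_{G,A}$ and $C_{\mathrm{ter},A}$ are uniform for $0<T\le\bar T$ by Theorem~\ref{thm:linear-wellposedness}. Likewise $B_M$ uses the coefficient norms on $[0,\bar T]$, so it does not depend on $T$. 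Hence the threshold $T_0$ does not depend on $T$.

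By the definition of $T_0$, we have $2K_{G,A}B_M\sqrt{T_0}\le A_\varphi+1$. Therefore
\[
\sup_t\norm{\mathcal{T}(V)(t)}_{A^{s+1}}\le 2A_\varphi+1=M,
\]
which is \eqref{eq:invariance-set}. If $B_M=0$, the integral term vanishes and the bound $A_\varphi\le M$ holds trivially. Since $V^0=0\in\mathcal{V}_M(T)$, induction then gives $V^n\in\mathcal{V}_M(T)$ for all $n$, which is the uniform iterate bound.

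The only step that needs care is the time-continuity of the nonlinear source, since Lemma~\ref{lem:nonlinear} is stated pointwise in $t$. We handle it by splitting differences such as $(\nabla V(t))^\top Q(t)\nabla V(t)-(\nabla V(t'))^\top Q(t')\nabla V(t')$ into telescoping terms. Each term is controlled by the algebra and module estimates, which reduces everything to $\norm{Q(t)-Q(t')}_{A^s}$ and $\norm{V(t)-V(t')}_{A^{s+1}}$, and both tend to zero as $t'\to t$.
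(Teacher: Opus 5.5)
Your proof is correct and follows the paper's argument: you insert the bound $B_M$ from Lemma~\ref{lem:nonlinear} into \eqref{eq:iteration-bound}, integrate the kernel to get $2K_{G,A}B_M\sqrt{T}$, and use $T\le T_0$ to close at $M=2A_\varphi+1$. Your explicit check that the source lies in $C([0,T];A^s)$, which is needed to invoke Theorem~\ref{thm:linear-wellposedness}, and your induction from $V^0=0$ fill in details the paper leaves implicit.
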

\begin{proof}
Fix $V\in\mathcal{V}_M(T)$. Lemma~\ref{lem:nonlinear} gives
\(
\norm{F(r;\nabla V(r);V(r))}_{A^s}\le B_M.
\)
For every $t\in[0,T]$, \eqref{eq:iteration-bound} yields
\begin{equation}
\begin{aligned}
\norm{\mathcal{T}(V)(t)}_{A^{s+1}}&\le A_\varphi+K_{G,A}B_M\int_t^T(r-t)^{-1/2}\,dr\\
&\le A_\varphi+2K_{G,A}B_M\sqrt T\\
&\le A_\varphi+(M-A_\varphi)=M.
\end{aligned}\label{eq:invariance-proof}
\end{equation}
The last inequality follows from $T\le T_0$; when $B_M=0$, it is immediate. Theorem~\ref{thm:linear-wellposedness} gives $\mathcal{T}(V)\in C([0,T];A^{s+1})$, so \eqref{eq:invariance-set} follows.
\end{proof}
\subsection{Gamma-factorial convergence}
By Proposition~\ref{prop:invariance}, all iterates lie in $\mathcal{V}_M(T)$ for $0<T\le T_0$, and \eqref{eq:nonlinear-lipschitz} applies uniformly to consecutive iterates.
For $n\ge 0$, set
\(
D_0:=\norm{V^1-V^0}_{C([0,T];A^{s+1})},\)
and
\(
e_n(t):=\norm{V^{n+1}(t)-V^n(t)}_{A^{s+1}}, 
\)
For $n\ge 1$, the two iterates have the same terminal contribution, and therefore
\begin{equation}
V^{n+1}(t)-V^n(t)=G_A\bigl[F(\cdot;\nabla V^n; V^n)-F(\cdot;\nabla V^{n-1};V^{n-1})\bigr](t).\label{eq:error-recursion}
\end{equation}
Then by Proposition~\ref{prop:invariance} and Proposition~\ref{prop:augmented},
\begin{equation}
\begin{aligned}
e_n(t)&\le K_{G,A}\int_t^T(r-t)^{-1/2}\norm{F(r;\nabla V^n(r);V^n(r))-F(r;\nabla V^{n-1}(r);V^{n-1}(r))}_{A^s}\,dr\\
&\le K_{G,A}L_M\int_t^T(r-t)^{-1/2}e_{n-1}(r)\,dr.
\end{aligned}\label{eq:volterra-inequality}
\end{equation}
Inequalities of the form \eqref{eq:volterra-inequality} belong to the classical class of weakly singular Volterra, or Henry--Gronwall, inequalities; see \cite[Lemma 7.1.1]{henry1981}. 
Here and below, $\Gamma$ denotes the Euler Gamma function and
\(
B(\alpha;\beta):=\frac{\Gamma(\alpha)\Gamma(\beta)}{\Gamma(\alpha+\beta)}, \alpha,\beta>0
\)
denotes the Euler Beta function.
\begin{proposition}[Gamma-factorial convergence]\label{prop:gamma}
Let $D_0$ be defined above. Then there exist
$V^{*}\in C([0,T];A^{s+1})$ and constants $C_\Gamma\ge0$, $\Lambda>0$ such that
\(
V^n\to V^{*}\quad\text{in }C\bigl([0,T];A^{s+1}\bigr),
\)
and
\begin{equation}
\norm{V^n-V^{*}}_{C([0,T];A^{s+1})}
\le
C_\Gamma\frac{\Lambda^n}{\Gamma(n/2+1)},
\qquad n\ge0,
\label{eq:gamma-rate}
\end{equation}
where
\(
\Lambda:=\max\{1,K_{G,A}L_M\sqrt{\pi T}\},
C_\Gamma
:=
D_0\bigl(1+2K_{G,A}L_M\sqrt{T}\bigr)
e^{\pi T K_{G,A}^2L_M^2}.
\)
\end{proposition}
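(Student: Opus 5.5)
We iterate the Volterra inequality \eqref{eq:volterra-inequality} and then sum the tail. Write $\beta:=K_{G,A}L_M$. All iterates lie in $\mathcal V_M(T)$ by Proposition~\ref{prop:invariance}, so \eqref{eq:volterra-inequality} holds for every $n\ge1$, and $e_0(t)\le D_0$.

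\textbf{Step 1: the pointwise bound.} By induction on $n$ we claim
\[
e_n(t)\le D_0\,\frac{\bigl(\beta\sqrt{\pi}\bigr)^n}{\Gamma(n/2+1)}\,(T-t)^{n/2},\qquad t\in[0,T].
\]
The base case $n=0$ is the definition of $D_0$. For the induction step, insert the bound for $e_{n-1}$ into \eqref{eq:volterra-inequality} and use the Beta integral
\[
\int_t^T(r-t)^{-1/2}(T-r)^{(n-1)/2}\,dr=B\Bigl(\tfrac12,\tfrac{n+1}{2}\Bigr)(T-t)^{n/2}.
\]
Since $\Gamma(1/2)=\sqrt\pi$, this gives
\[
\frac{B(1/2,(n+1)/2)}{\Gamma((n+1)/2)}=\frac{\sqrt\pi}{\Gamma(n/2+1)},
\]
which closes the induction. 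This is the same computation as \eqref{eq:volterra-kappa}. Taking the supremum over $t$ yields
\[
\sup_t e_n(t)\le D_0\,\frac{a^n}{\Gamma(n/2+1)},\qquad a:=\beta\sqrt{\pi T}.
\]

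\textbf{Step 2: Cauchy property and the limit.} The bound from Step 1 is summable in $n$, since $\sum_n a^n/\Gamma(n/2+1)$ is a Mittag-Leffler series that converges for every $a$. Hence $(V^n)$ is Cauchy in the Banach space $C([0,T];A^{s+1})$ and has a limit $V^*$ there. The ball $\mathcal V_M(T)$ is closed, so $V^*\in\mathcal V_M(T)$. (That $V^*=\mathcal T(V^*)$ follows from the Lipschitz estimate preceding the proposition, but it is not needed for \eqref{eq:gamma-rate}.)

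\textbf{Step 3: the tail estimate.} This is the main obstacle: we must show
\[
\sum_{k\ge n}\frac{a^k}{\Gamma(k/2+1)}\le \bigl(1+2\beta\sqrt T\bigr)e^{a^2/\pi\cdot\pi/\pi}\,\frac{\Lambda^n}{\Gamma(n/2+1)},
\]
where the exponential is $e^{\pi T\beta^2}=e^{a^2}$ with $\Lambda=\max\{1,a\}$.

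First we bound the ratio. Using $\Gamma(x+y)\ge\Gamma(x)\Gamma(y+1)$-type estimates, or more directly log-convexity of $\Gamma$, which gives
\[
\Gamma\Bigl(\frac{n+j}{2}+1\Bigr)\ge\Gamma\Bigl(\frac n2+1\Bigr)\Gamma\Bigl(\frac j2+1\Bigr)\,c,
\]
we aim for
\[
\frac{\Gamma(n/2+1)}{\Gamma((n+j)/2+1)}\le\frac{1}{\Gamma(j/2+1)}.
\]
This inequality holds because $B(n/2+1,j/2+1)\le1$ up to the factor relating $\Gamma(n/2+j/2+2)$ and $\Gamma(n/2+j/2+1)$; we will check it carefully, and if needed replace it by the superadditivity $\Gamma(x+1)\Gamma(y+1)\le\Gamma(x+y+1)$ for $x,y\ge0$.

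With the ratio bound, the tail is controlled by
\[
\frac{a^n}{\Gamma(n/2+1)}\sum_{j\ge0}\frac{a^j}{\Gamma(j/2+1)}.
\]
We then split the sum into even and odd $j$. The even terms give $\sum_m a^{2m}/m!=e^{a^2}$. For the odd terms, $\Gamma(m+3/2)\ge m!\,\Gamma(3/2)$ gives the bound $\frac{2a}{\sqrt\pi}e^{a^2}\le 2\beta\sqrt T\,e^{a^2}$. Adding the two pieces gives the constant $(1+2\beta\sqrt T)e^{\pi T\beta^2}$.

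Finally, $a^n\le\Lambda^n$ since $\Lambda=\max\{1,a\}$, and
\[
\|V^n-V^*\|_{C([0,T];A^{s+1})}\le\sum_{k\ge n}\sup_t e_k(t),
\]
which proves \eqref{eq:gamma-rate} with the stated $C_\Gamma$. The case $n=0$ is covered by the same computation.
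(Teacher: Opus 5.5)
Your proof is correct and follows the paper's own argument: the same induction closed by the Beta integral, the same summability/Cauchy step, and the same even/odd splitting of the tail. The paper's two ratio bounds $\Gamma(n/2+1)/\Gamma(n/2+m+1)\le 1/m!$ and $\Gamma(n/2+1)/\Gamma(n/2+m+3/2)\le 2/(\sqrt\pi\,m!)$ are exactly the $j=2m$ and $j=2m+1$ cases of your inequality $\Gamma(x+1)\Gamma(y+1)\le\Gamma(x+y+1)$.

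Commit to that superadditivity rather than the $B(\cdot,\cdot)\le 1$ route, which loses a factor $(n+j)/2+1$. The superadditivity follows from convexity of $x\mapsto\log\Gamma(x+1)$ on $[0,\infty)$ together with $\log\Gamma(1)=0$. Also fix the garbled exponent $e^{a^2/\pi\cdot\pi/\pi}$, which should read $e^{a^2}$.
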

\begin{proof}
For $n\ge0$, we claim that
\(
e_n(t)\le
D_0\frac{(K_{G,A}L_M\sqrt\pi)^n(T-t)^{n/2}}
{\Gamma(n/2+1)}.
\)
The case $n=0$ follows from the definition of $D_0$. Assuming the claim
for $n-1$, \eqref{eq:volterra-inequality} gives
\begin{equation}
e_n(t)\le
D_0\frac{(K_{G,A}L_M)^n\pi^{(n-1)/2}}
{\Gamma((n+1)/2)}
\int_t^T(r-t)^{-1/2}(T-r)^{(n-1)/2}\,dr.
\label{eq:induction-step}
\end{equation}
By the change of variables $r=t+(T-t)\theta$ and the Beta identity,
\begin{equation}
\int_t^T(r-t)^{-1/2}(T-r)^{(n-1)/2}\,dr
=
(T-t)^{n/2}
B\Bigl(\frac12,\frac{n+1}{2}\Bigr).
\label{eq:beta-integral}
\end{equation}
Since $\Gamma(1/2)=\sqrt\pi$, the claim follows.
Set
\(
\theta_T:=K_{G,A}L_M\sqrt{\pi T}.
\)
Taking the supremum in time gives
\[
\norm{V^{n+1}-V^n}_{C([0,T];A^{s+1})}
\le
D_0\frac{\theta_T^n}{\Gamma(n/2+1)}.
\]
The series on the right is summable, hence $V^n$ converges in
$C([0,T];A^{s+1})$ to some $V^{*}$.
Moreover, for $m\ge0$,
\[
\frac{\Gamma(n/2+1)}
{\Gamma((n+2m)/2+1)}
\le\frac1{m!},
\qquad
\frac{\Gamma(n/2+1)}
{\Gamma((n+2m+1)/2+1)}
\le\frac{2}{\sqrt\pi\,m!}.
\]
Therefore
\[
\begin{aligned}
\norm{V^n-V^{*}}_{C([0,T];A^{s+1})}
&\le
D_0\sum_{j=n}^{\infty}
\frac{\theta_T^j}{\Gamma(j/2+1)}\\
&\le
D_0\left(1+\frac{2\theta_T}{\sqrt\pi}\right)
e^{\theta_T^2}
\frac{\theta_T^n}{\Gamma(n/2+1)}\\
&\le
C_\Gamma\frac{\Lambda^n}{\Gamma(n/2+1)}.
\end{aligned}
\]
This proves \eqref{eq:gamma-rate}.
For the main theorem, $D_0\le M$ and $T\le\bar T$ allow the choices
\[
\bar\Lambda=\max\{1,K_{G,A}L_M\sqrt{\pi\bar T}\},\qquad
\bar C_\Gamma=M(1+2K_{G,A}L_M\sqrt{\bar T})e^{\pi\bar T K_{G,A}^2L_M^2},
\]
which are independent of $T\le T_0$.
\end{proof}
\begin{proposition}[Limit equation and time regularity]\label{prop:limit}
For $s>2$, let $V^{*}$ be the limit obtained in Proposition~\ref{prop:gamma}. Then
\[
V^{*}\in C^1\bigl([0,T];A^{s-1}\bigr)\cap C\bigl([0,T];A^{s+1}\bigr),
\]
and
\begin{equation}
\partial_tV^{*}+\frac12\,a:D_x^2V^{*}+F(t;\nabla_xV^{*};V^{*})=0,\qquad V^{*}(T)=\varphi.\label{eq:limit-equation}
\end{equation}
Moreover, $V^{*}$ is bounded and belongs to
\(
C^{1,2}\bigl([0,T]\times\R^d\bigr),
\)
so \eqref{eq:limit-equation} holds pointwise.
\end{proposition}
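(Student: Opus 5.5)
The plan is to pass to the limit in the fixed-point relation $V^{n+1}=\mathcal T(V^n)$ and then apply Theorem~\ref{thm:linear-wellposedness} to the limit. By Proposition~\ref{prop:gamma}, $V^n\to V^*$ in $C([0,T];A^{s+1})$. All iterates lie in $\mathcal V_M(T)$, and this ball is closed, so $V^*\in\mathcal V_M(T)$. The Lipschitz estimate \eqref{eq:nonlinear-lipschitz} then gives
\[
F(\cdot;\nabla V^n;V^n)\to F(\cdot;\nabla V^*;V^*)\quad\text{in }C([0,T];A^s).
\]
Continuity in time of $t\mapsto F(t;\nabla V^*(t);V^*(t))$ in $A^s$ follows from the algebra estimates of Proposition~\ref{prop:algebra}, the continuity of $f,\ell,Q$ in $A^s$, and $V^*\in C([0,T];A^{s+1})$. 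The Green bound \eqref{eq:augmented-difference-bound} controls $G_A$ on sup norms by $2K_{G,A}\sqrt T$. Hence $\mathcal T(V^n)\to\mathcal T(V^*)$ in $C([0,T];A^{s+1})$, and therefore $V^*=\mathcal T(V^*)$.

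With $H^*:=F(\cdot;\nabla V^*;V^*)\in C([0,T];A^s)$, Theorem~\ref{thm:linear-wellposedness} and Proposition~\ref{prop:augmented} identify $V^*=U_A(\cdot;T)\varphi+G_AH^*$ as the unique solution of the linear problem \eqref{eq:linear-main-pde} with source $H^*$. This gives $V^*\in C^1([0,T];A^{s-1})$, the equation \eqref{eq:limit-equation} as an identity in $C([0,T];A^{s-1})$, and $V^*(T)=\varphi$.

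For classical regularity, I will use $s>2$, so $s-1>1>0$. The embedding $A^{r}\hookrightarrow\C\oplus C_0$ of Proposition~\ref{prop:basic} holds for all $r\ge0$, and the evaluation map $A^r\ni c+h\mapsto c+h(x)$ is bounded uniformly in $x$. First, $V^*\in C([0,T];A^{s+1})$ together with the differentiation bounds $\|\partial_{ij}V\|_{B^{s-1}}\le\|V\|_{A^{s+1}}$ gives that $V^*,\nabla V^*,D^2V^*$ are jointly continuous and bounded on $[0,T]\times\R^d$. Joint continuity comes from uniform continuity in $x$ of $B^0$ functions combined with continuity in $t$ in sup norm. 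Second, since $\partial_tV^*\in C([0,T];A^{s-1})\subset C([0,T];\C\oplus C_0)$, the Banach-space time derivative coincides with the pointwise derivative $\partial_tV^*(t,x)$, because evaluation at $x$ is a bounded linear functional. So $\partial_tV^*$ is continuous, $V^*\in C^{1,2}$, and \eqref{eq:limit-equation} holds pointwise. Boundedness follows from $|V^*(t,x)|\le\|V^*(t)\|_{A^{s+1}}\le M$.

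The main point needing care is the identification of the Bochner $C^1$ derivative with the classical one, and of the distributional equation with the pointwise one. I handle it through the bounded evaluation functionals. Specifically, if $(V(t+\epsilon)-V(t))/\epsilon\to\partial_tV(t)$ in $A^{s-1}$, then it converges uniformly in $x$. The remaining steps are routine consequences of earlier estimates.
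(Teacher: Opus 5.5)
Your proof is correct, but it takes a different route from the paper.

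\textbf{The paper's route.} The paper never passes to the limit in the fixed-point form. It sets $G_n:=\frac12\,a:D_x^2V^{n+1}+F(t;\nabla V^n;V^n)$ and shows $G_n\to G_*$ in $C([0,T];A^{s-1})$ using the module and Lipschitz estimates. It then passes to the limit in the integrated iterate identity $V^{n+1}(t)=\varphi+\int_t^T G_n(r)\,dr$, working in $A^{s-1}$. Finally, it obtains $V^*\in C^1([0,T];A^{s-1})$ and the equation from the Banach-valued fundamental theorem of calculus.

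\textbf{Your route.} You pass to the limit in $V^{n+1}=\mathcal T(V^n)$ in the stronger norm of $C([0,T];A^{s+1})$, using the sup-norm bound $2K_{G,A}\sqrt T$ for $G_A$. This gives $V^*=\mathcal T(V^*)$. You then read off the time regularity and the equation from Proposition~\ref{prop:augmented}, applied with the source $H^*=F(\cdot;\nabla V^*;V^*)$.

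\textbf{What each buys.} Your argument yields the mild representation $V^*=U_A(\cdot;T)\varphi+G_AH^*$ directly. It also hands all time differentiability to the linear theory. The price is that you must check $H^*\in C([0,T];A^s)$, which you do. The paper's argument never re-invokes the Green operator for the limit. It passes to the limit only in the weaker space $A^{s-1}$ and relies on the $C^1$ property of each iterate, but it does not record the fixed-point identity.

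Your treatment of classical regularity uses bounded point evaluations on $A^{s-1}$ to identify the Bochner time derivative with the pointwise one. This is the same argument the paper sketches through the Barron embedding, stated more carefully.
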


\begin{proof}
Set
\[
G_n:=\frac12\,a:D_x^2V^{n+1}+F(t;\nabla V^n;V^n),
\qquad
G_*:=\frac12\,a:D_x^2V^{*}+F(t;\nabla V^{*};V^{*}).
\]
By Proposition~\ref{prop:gamma}, the derivative and module estimates, and
\eqref{eq:nonlinear-lipschitz},
\[
\begin{aligned}
\norm{G_n-G_*}_{C([0,T];A^{s-1})}
&\le
\frac12\norm{a}_{C([0,T];A^s)}
\norm{V^{n+1}-V^*}_{C([0,T];A^{s+1})} \\
&\quad+
L_M\norm{V^n-V^*}_{C([0,T];A^{s+1})}
\longrightarrow0.
\end{aligned}
\]
Since
\[
V^{n+1}(t)=\varphi+\int_t^TG_n(r)\,dr,
\]
passing to the limit gives
\[
V^*(t)=\varphi+\int_t^TG_*(r)\,dr
\quad\text{in }A^{s-1}.
\]
As $G_*\in C([0,T];A^{s-1})$, the Banach-valued fundamental theorem
of calculus yields
\[
V^*\in C^1([0,T];A^{s-1}),
\qquad
\partial_tV^*=-G_*,
\]
and hence \eqref{eq:limit-equation}.
Finally, $V^*\in C([0,T];A^{s+1})$ implies
\(
\nabla_xV^*\in C([0,T];B^s),
D_x^2V^*\in C([0,T];B^{s-1}).
\)
Since $s>2$, Proposition~\ref{prop:basic} yields
\[
V^*\in C^{1,2}([0,T]\times\R^d).
\]
Hence \eqref{eq:limit-equation} holds pointwise.
\end{proof}
\begin{corollary}[Feedback convergence]\label{cor:feedback}
Let
\(
u^n:=-\frac12R^{-1}g^\top\nabla_xV^n,
u^*:=-\frac12R^{-1}g^\top\nabla_xV^*.
\)
Then
\(
u^n,\ u^*\in C\bigl([0,T];B^s(\R^d;\R^m)\bigr),
\)
and
\begin{equation}
\norm{u^n-u^*}_{C([0,T];B^s)}
\le
C_uC_\Gamma
\frac{\Lambda^n}{\Gamma(n/2+1)},
\qquad n\ge0.
\label{eq:feedback-rate}
\end{equation}
\end{corollary}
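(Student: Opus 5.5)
The proof follows from the feedback Lipschitz estimate \eqref{eq:feedback-lipschitz} of Lemma~\ref{lem:nonlinear} combined with the rate \eqref{eq:gamma-rate} of Proposition~\ref{prop:gamma}. It splits into two steps: regularity of $u^n$ and $u^*$, then the error bound.

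\textbf{Regularity.} For $V\in C([0,T];A^{s+1})$, Proposition~\ref{prop:algebra} gives $\partial_iV\in C([0,T];B^s)$, since differentiation is bounded and linear from $A^{s+1}$ to $B^s$. By Assumption~\ref{ass:standing}, $g\in C([0,T];A^s)$, and the module estimate $\|a_1h\|_{B^s}\le\|a_1\|_{A^s}\|h\|_{B^s}$ holds. Hence the product map $(g,\nabla V)\mapsto g^\top\nabla V$ is continuous into $B^s$, via the splitting
\[
g(t)^\top\nabla V(t)-g(t')^\top\nabla V(t') = (g(t)-g(t'))^\top\nabla V(t) + g(t')^\top(\nabla V(t)-\nabla V(t')).
\]
Multiplying by the constant matrix $-\tfrac12R^{-1}$ costs at most the factor $\tfrac12C_R$ in entrywise norms. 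Applied to $V^n$ (each in $C([0,T];A^{s+1})$ by Theorem~\ref{thm:linear-wellposedness}) and to $V^*$ (by Proposition~\ref{prop:gamma}), this gives $u^n,u^*\in C([0,T];B^s(\R^d;\R^m))$.

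\textbf{Rate.} For each fixed $t$, linearity gives $u^n-u^*=u_{V^n-V^*}$. Estimate \eqref{eq:feedback-lipschitz} then yields
\[
\|u^n(t)-u^*(t)\|_{B^s}\le C_u\|V^n(t)-V^*(t)\|_{A^{s+1}}.
\]
This constant $C_u=\tfrac12C_R\|g\|$ does not depend on $M$. Indeed, the estimate is linear in $V-W$ and needs no ball restriction, though in any case all $V^n$ and $V^*$ lie in $\mathcal V_M(T)$ by Proposition~\ref{prop:invariance} and closedness of the ball. Taking the supremum over $t\in[0,T]$ and inserting \eqref{eq:gamma-rate} gives \eqref{eq:feedback-rate} for every $n\ge0$.

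\textbf{Where care is needed.} The only point to check is that the module estimate applies entrywise to the matrix--vector contraction $R^{-1}g^\top\nabla V$ without extra dimension factors. This holds because the entrywise $\ell^1$ conventions of Definition~\ref{def:barron} make each contraction a sum of products, each bounded by the module estimate. Everything else is routine.
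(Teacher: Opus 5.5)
Your proposal is correct and follows the paper's argument. The differentiation and module estimates give $u^n,u^*\in C([0,T];B^s)$ together with $\norm{u^n-u^*}_{C([0,T];B^s)}\le C_u\norm{V^n-V^*}_{C([0,T];A^{s+1})}$, and the Gamma rate \eqref{eq:gamma-rate} of Proposition~\ref{prop:gamma} then yields \eqref{eq:feedback-rate}; your extra remarks on continuity via the product splitting and on the $M$-independence of $C_u$ (by linearity) only make explicit what the paper leaves implicit.
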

\begin{proof}
By Propositions~\ref{prop:gamma}--\ref{prop:limit}, the differentiation
and module estimates give
\(
u^n,u^*\in C([0,T];B^s)
\)
and
\(
\norm{u^n-u^*}_{C([0,T];B^s)}
\le
C_u\norm{V^n-V^*}_{C([0,T];A^{s+1})}.
\)
The conclusion follows from \eqref{eq:gamma-rate}.
\end{proof}
\subsection{Closed-Loop Well-Posedness, Verification, and Optimality}
\begin{proposition}[Verification and optimality]\label{prop:verification}
Fix $(t;x)\in[0,T]\times\R^d$ and consider the closed-loop equation
\begin{equation}
\begin{cases}
dX_r^{*}=\bigl(f(r,X_r^{*})+g(r,X_r^{*})u^{*}(r,X_r^{*})\bigr)\,dr+\sigma(r,X_r^{*})\,dW_r,\\
X_t^{*}=x,
\end{cases}\label{eq:closed-loop-sde}
\end{equation}
the feedback $u^*$ induces a unique strong solution $X^*$ of the closed-loop state equation, and
\(
u_r^*:=u^*(r,X_r^*)\in\mathcal U_{t,T}(x),
V^*=V_{\mathrm{val}},
J(t;x;u^*)=V^*(t;x),
\)
and $u^*$ is optimal.
\end{proposition}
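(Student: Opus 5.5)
The plan is a classical verification argument, run in three steps: closed-loop well-posedness, then Itô's formula with the HJB equation for a lower bound against every admissible control, then equality along the feedback.

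\emph{Step 1: closed-loop well-posedness.} By Proposition~\ref{prop:limit}, $V^*\in C([0,T];A^{s+1})$, so $\nabla_x V^*\in C([0,T];B^s)$ with $s>2$. Since $B^s\subset B^1$ and $\|\partial_i h\|_{B^0}\le\|h\|_{B^1}$, the function $\nabla_x V^*$ is bounded and globally Lipschitz in $x$, uniformly in $t$. The same holds for the components of $f,g,\sigma$ in $A^s$ or $A^{s+2}$: their constant parts are harmless and their $B$-parts are bounded with bounded gradients. Hence $u^*(r,x)=-\frac12R^{-1}g^\top\nabla_xV^*$ is bounded and Lipschitz, because a product of bounded Lipschitz functions is bounded Lipschitz. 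The closed-loop drift $f+gu^*$ and the diffusion $\sigma$ are then globally Lipschitz with linear growth and continuous in $r$. Standard SDE theory gives a unique strong solution $X^*$ with $\E\sup_r|X^*_r|^2<\infty$. Since $u^*$ is bounded, $\E\int_t^T|u^*_r|^2dr<\infty$. Finally, $\ell$ and $\varphi$ are bounded (they lie in $A^s\subset C_b$), so $J$ is finite and $u^*\in\mathcal U_{t,T}(x)$.

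\emph{Step 2: lower bound $J(t;x;u)\ge V^*(t;x)$ for every admissible $u$.} Fix $u\in\mathcal U_{t,T}(x)$ with $X=X^{t,x;u}$. Apply Itô's formula to $e^{-\gamma(r-t)}V^*(r,X_r)$; this is legitimate because $V^*\in C^{1,2}$ by Proposition~\ref{prop:limit}. Using \eqref{eq:limit-equation} together with $F\le H(\cdot,\nabla V^*,u)-\gamma V^*$ pointwise, the $dr$-terms satisfy
\[
d\bigl(e^{-\gamma(r-t)}V^*(r,X_r)\bigr)\ge -e^{-\gamma(r-t)}\bigl(\ell(r,X_r)+u_r^\top Ru_r\bigr)dr+e^{-\gamma(r-t)}\nabla V^*(r,X_r)^\top\sigma(r,X_r)\,dW_r .
\]
The stochastic integrand is bounded, because $\nabla V^*$ is bounded and $\sigma$ is bounded (its constant plus $B$ part is bounded). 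So the stochastic integral is a true martingale and no localization is needed. Integrate from $t$ to $T$, use $V^*(T)=\varphi$, and take expectations. The running-cost integrand has finite-mean positive part, and $J>-\infty$ by admissibility, so expectations split legitimately and give $V^*(t;x)\le J(t;x;u)$. Taking the infimum yields $V^*\le V_{\mathrm{val}}$.

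\emph{Step 3: equality along the feedback.} For $u=u^*(r,X^*_r)$, the pointwise minimizer \eqref{eq:optimal-control} attains the infimum in $F$, so the inequality in Step 2 becomes an equality. This gives $J(t;x;u^*)=V^*(t;x)$, and combined with Step 2, $V_{\mathrm{val}}=V^*$ and $u^*$ is optimal; in particular $V_{\mathrm{val}}$ is finite.

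The main obstacle is integrability in Step 2 for an arbitrary admissible control. The drift contains the unbounded process $u_r$, but it enters only through $dr$-terms. The rearrangement requires $\E\int|\nabla V^*\cdot g u|\,dr<\infty$, which follows from boundedness of $\nabla V^*$ and $g$ together with $\E\int|u|^2<\infty$. The quadratic cost $u^\top Ru$ is nonnegative, so monotone convergence handles it. If needed, I would localize with stopping times $\tau_k=\inf\{r:|X_r|\ge k\}$ and pass to the limit using boundedness of $V^*$ and dominated or monotone convergence.
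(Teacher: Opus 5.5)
Your proposal is correct and follows essentially the same verification argument as the paper. You get closed-loop well-posedness and admissibility from the boundedness and Lipschitz bounds supplied by the Barron embeddings ($s>2$), and then apply It\^o's formula to $e^{-\gamma(r-t)}V^*(r,X_r)$, where the stochastic integral is a true martingale because $\sigma$ and $\nabla_xV^*$ are bounded. The only cosmetic difference is that the paper records the gap as the exact completed square $J(t;x;u)-V^*(t;x)=\E\int_t^T e^{-\gamma(r-t)}\bigl(u_r-u^*(r,X_r)\bigr)^\top R\bigl(u_r-u^*(r,X_r)\bigr)\,dr$, whereas you use the pointwise inequality $F\le H-\gamma V^*$ with equality at the minimizer.
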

\begin{proof}
We first verify that the feedback control induces a well-posed closed-loop state equation. By Corollary~\ref{cor:feedback},
\(
u^{*}\in C\bigl([0,T];B^s(\R^d;\R^m)\bigr).
\)
By Assumption~\ref{ass:standing}, Corollary~\ref{cor:feedback}, and
Propositions~\ref{prop:basic}--\ref{prop:algebra}, since $s>2$,
$f$, $g$, $\sigma$, and $u^*$ are bounded with bounded first spatial
derivatives, uniformly in time. Hence
\(
b^*:=f+gu^*
\)
and $\sigma$ are bounded and globally Lipschitz in $x$, uniformly in $t$.
The standard SDE existence--uniqueness theorem
\cite[Chapter~1, Theorem~6.3]{yong1999} therefore gives a unique strong
solution $X^*$ of \eqref{eq:closed-loop-sde}.
Since $b^*$ and $\sigma$ are bounded, the integral form of the SDE and the Burkholder--Davis--Gundy inequality also give
\[
\E\Bigl[\sup_{t\le r\le T}|X_r^*|^2\Bigr]\le C(1+|x|^2)<\infty.
\]
We next verify admissibility of the induced feedback control
\(
u_r^{*}:=u^{*}(r,X_r^{*}).
\)
Since $u^{*}$ is continuous and $X^{*}$ is adapted with continuous paths, $(u_r^{*})_{r\in[t,T]}$ is progressively measurable. Since $u^{*}$ is bounded,
\[
\E\int_t^T|u_r^{*}|^2\,dr\le T\norm{u^{*}}_{L^\infty_{t,x}}^2<\infty.
\]
Furthermore, $\ell$ and $\varphi$ are bounded by the Barron embedding. Since $R$ is fixed, there exists $C_{R,+}=\|R\|_{\mathrm{op}}$ such that
\[
|v^{\top}Rv|\le C_{R,+}|v|^2.
\]
Consequently, Proposition~\ref{prop:basic}--\ref{prop:algebra} imply that, for any fixed $\gamma\in\mathbb{R}$,
\[
\begin{aligned}
|J(t;x;u^{*})|
&\le
\E\int_t^T e^{-\gamma(r-t)}
\Bigl(
|\ell(r,X_r^*)|
+
C_{R,+}|u^*(r,X_r^*)|^2
\Bigr)\,dr
+
e^{-\gamma(T-t)}
\E|\varphi(X_T^*)|
\\
&\le
e^{|\gamma| T}(T\norm{\ell}_{L^\infty_{t,x}}
+
T C_{R,+}\norm{u^{*}}_{L^\infty_{t,x}}^2
+
\norm{\varphi}_{L^\infty_x})
\\
&\le
e^{|\gamma| T}(T\norm{\ell}_{C([0,T];A^{s})}
+
T C_{R,+}\norm{u^{*}}_{C([0,T];B^{s})}^2
+
\norm{\varphi}_{A^{s+1}})
<\infty.
\end{aligned}
\]
Hence
\(
u^{*}\in\mathcal{U}_{t,T}(x).
\)

Let $u\in\mathcal U_{t,T}(x)$ be arbitrary and write
\[
X_r:=X_r^{t,x;u},
\qquad
p_r:=\nabla_xV^*(r,X_r).
\]
The boundedness of $\sigma$ and $\nabla_xV^*$ gives
\[
\E\int_t^T
e^{-2\gamma(r-t)}
|\sigma(r,X_r)^\top p_r|^2\,dr
<\infty,
\]
so the stochastic integral arising from It\^o's formula applied to
\(
e^{-\gamma(r-t)}V^*(r,X_r)
\)
is a square-integrable martingale with zero expectation.
All drift and cost terms below are integrable because the data and
derivatives of $V^*$ are bounded and
\(
\E\int_t^T|u_r|^2\,dr<\infty.
\)

Applying It\^o's formula and using $V^*(T,\cdot)=\varphi$, we obtain
\[
\begin{aligned}
&J(t;x;u)-V^*(t;x)\\
&=
\E\int_t^T
e^{-\gamma(r-t)}
\Bigl[
\ell(r,X_r)+u_r^\top Ru_r
+\partial_tV^*(r,X_r)
+\frac12a(r,X_r):D_x^2V^*(r,X_r)
\\
&\qquad
+\bigl(f(r,X_r)+g(r,X_r)u_r\bigr)\cdot p_r
-\gamma V^*(r,X_r)
\Bigr]\,dr
\\
&=
\E\int_t^T
e^{-\gamma(r-t)}
\left[
u_r^\top Ru_r
+\bigl(g(r,X_r)u_r\bigr)\cdot p_r
+\frac14p_r^\top Q(r,X_r)p_r
\right]\,dr
\\
&=
\E\int_t^T
e^{-\gamma(r-t)}
\bigl(u_r-u^*(r,X_r)\bigr)^\top
R
\bigl(u_r-u^*(r,X_r)\bigr)\,dr\ge0.
\end{aligned}
\]
Here the first equality follows from It\^o's formula applied to
$e^{-\gamma(r-t)}V^*(r,X_r)$ and the terminal condition, the second
from the discounted HJB equation, and the third from
$u^*=-\frac12R^{-1}g^\top p$ and $Q=gR^{-1}g^\top$.

For the admissible closed-loop control
$u_r=u^*(r,X_r^*)$, the right-hand side vanishes. Hence
\[
J(t;x;u^*)=V^*(t;x)=V_{\mathrm{val}}(t;x),
\]
and $u^*$ is optimal.
\end{proof}
\begin{proof}[Proof of Theorem~\ref{thm:solvability}]
Propositions~\ref{prop:invariance}, \ref{prop:gamma}, and
\ref{prop:limit} yield a bounded classical solution
\[
V^*\in C^1([0,T];A^{s-1})\cap C([0,T];A^{s+1}),
\]
with
\[
\norm{V^n-V^*}_{C([0,T];A^{s+1})}
\le
C_\Gamma\frac{\Lambda^n}{\Gamma(n/2+1)}.
\]
Together with Corollary~\ref{cor:feedback}, this gives, after enlarging
the constant if necessary, the convergence estimate
\eqref{eq:main-convergence}. Proposition~\ref{prop:verification} further
yields
\[
J(t;x;u^*)=V^*(t;x)=V_{\mathrm{val}}(t;x),
\]
so $u^*$ is admissible and optimal.
It remains only to prove uniqueness. Let $W$ be another bounded classical
solution in
\[
C^1([0,T];A^{s-1})\cap C([0,T];A^{s+1}),
\qquad W(T)=\varphi,
\]
and set $Z:=W-V^*$. Subtracting the two HJB equations and using
$Q=Q^\top$ gives
\[
\partial_tZ+\frac12a:D_x^2Z
+\left[
f-\frac14Q\bigl(\nabla_xW+\nabla_xV^*\bigr)
\right]\cdot\nabla_xZ
-\gamma Z=0,
\qquad
Z(T)=0.
\]
Set
\(
b:=f-\frac14Q\bigl(\nabla_xW+\nabla_xV^*\bigr).
\)
By the Barron regularity, $b$ is bounded and continuous and $Z$ is a
bounded classical solution. Define
\[
\widetilde Z(t,x):=e^{\gamma(T-t)}Z(t,x).
\]
Then $\widetilde Z$ is bounded and classical on
$[0,T]\times\R^d$, satisfies
\[
\partial_t\widetilde Z
+\frac12a:D_x^2\widetilde Z
+b\cdot\nabla_x\widetilde Z=0,
\qquad
\widetilde Z(T)=0.
\]
Hence Lemma~\ref{lem:uniqueness} yields
$\widetilde Z\equiv0$, and therefore $Z\equiv0$.
Thus $W=V^*$.
\end{proof}
\section{Joint Space--Time Neural Approximation}
The approximation is obtained in two steps. We first approximate each
iterate in time by a finite cosine expansion and then approximate its
spatial coefficients by spectral Barron cosine networks.
For brevity, write
\[
\|v\|_{CX}:=\|v\|_{C([0,T];X)}.
\]
We use the classical Fej\'er--Jackson construction for trigonometric
approximation; see, e.g., \cite{devore1993,anastassiou2007}.
For $N_t \in \mathbb N$ , let $m:=\left\lfloor\frac{N_t}{2}\right\rfloor+1,$
\[
F_m(\eta)
:=
\frac1m
\left(
\frac{\sin(m\eta/2)}{\sin(\eta/2)}
\right)^2
=
\sum_{j\in\mathbb Z}q_{j,m}e^{ij\eta},
\qquad
q_{j,m}:=
\left(1-\frac{|j|}{m}\right)_+,
\]
be the Fej\'er kernel. We use the associated Jackson kernel
\begin{equation}
K_m(\eta)
:=
\frac{3}{2\pi m(2m^2+1)}
\left(
\frac{\sin(m\eta/2)}{\sin(\eta/2)}
\right)^4.
\label{eq:jackson-kernel}
\end{equation}
The form \eqref{eq:jackson-kernel} will be used for the approximation
error. For the coefficient estimates, using
\[
\left(
\frac{\sin(m\eta/2)}{\sin(\eta/2)}
\right)^4
=
m^2F_m(\eta)^2
\]
gives the finite Fourier expansion
\begin{equation}
K_m(\eta)
=
\frac1{2\pi}
\sum_{|k|\le2m-2}\mu_{k,m}e^{ik\eta},
\qquad
\mu_{k,m}
:=
\frac{3m}{2m^2+1}
\sum_{j\in\mathbb Z}q_{j,m}q_{k-j,m}.
\label{eq:jackson-fourier}
\end{equation}
Since
\(
\sum_jq_{j,m}^2=\frac{2m^2+1}{3m},
\)
Cauchy--Schwarz gives
\[
\mu_{0,m}=1,
\qquad
0\le\mu_{k,m}\le1.
\]
Thus $K_m$ is nonnegative and normalized.

\begin{proposition}[Banach-valued temporal Jackson approximation]
\label{prop:jackson}
Let $X_1\hookrightarrow X_0$ be Banach spaces and
\(
v\in C([0,T];X_1)\cap C^1([0,T];X_0).
\)
For every integer $N_t\ge1$, there exists
\[
P_{N_t}v(t)
=
\sum_{k=0}^{N_t}
B_k\cos\Bigl(\frac{k\pi t}{T}\Bigr),
\qquad B_k\in X_1,
\]
such that, with
\(
M:=\|v\|_{C X_1},
C_J:=\frac{3\pi^3}{2},
\)
\begin{equation}
\|v-P_{N_t}v\|_{C X_0}
\le
\frac{2C_JT}{\pi N_t}
\|\partial_t v\|_{C X_0},
\label{eq:scaled-jackson}
\end{equation}
and
\begin{equation}
\|B_0\|_{X_1}\le M,
\qquad
\|B_k\|_{X_1}\le2M,
\qquad
\|B_k\|_{X_0}
\le
\frac{2T}{k\pi}\|\partial_t v\|_{C X_0},
\quad 1\le k\le N_t.
\label{eq:jackson-coefficient-bounds}
\end{equation}
\end{proposition}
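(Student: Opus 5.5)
We reduce to a $2\pi$-periodic problem by even reflection. Set $w(\eta):=v(T|\eta|/\pi)$ for $\eta\in[-\pi,\pi]$ and extend $2\pi$-periodically. Since $v\in C([0,T];X_1)$, the extension $w$ is continuous into $X_1$, with $\sup_\eta\|w(\eta)\|_{X_1}=M$. It is also Lipschitz into $X_0$: on each piece $w$ is $C^1$ in $X_0$ with $\|w'(\eta)\|_{X_0}=\frac{T}{\pi}\|\partial_tv(T|\eta|/\pi)\|_{X_0}$, and the corners at $\eta=0,\pm\pi$ only produce kinks, not jumps. Hence
\[
\|w(\eta)-w(\eta')\|_{X_0}\le\tfrac{T}{\pi}\|\partial_tv\|_{CX_0}|\eta-\eta'|.
\]

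We define the approximant by Jackson convolution, $(K_m*w)(\eta):=\int_{-\pi}^{\pi}K_m(\eta-\zeta)w(\zeta)\,d\zeta$, as an $X_1$-valued Bochner integral, with $m=\lfloor N_t/2\rfloor+1$. By \eqref{eq:jackson-fourier}, $K_m*w$ is a trigonometric polynomial of degree $\le 2m-2\le N_t$ with coefficients $\mu_{k,m}\hat w_k$, where $\hat w_k=\frac1{2\pi}\int w(\zeta)e^{-ik\zeta}d\zeta\in X_1$. Because $w$ is even, $\hat w_k=\hat w_{-k}$, and pairing $\pm k$ gives $K_m*w(\eta)=\sum_{k=0}^{N_t}B_k\cos(k\eta)$ with $B_0=\hat w_0$ and $B_k=2\mu_{k,m}\hat w_k$, the terms with $k>2m-2$ being zero. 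We put $P_{N_t}v(t):=(K_m*w)(\pi t/T)$, which has the required form.

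The coefficient bounds follow directly. Since $0\le\mu_{k,m}\le1$ and $\|\hat w_k\|_{X_1}\le M$, we get $\|B_0\|_{X_1}\le M$ and $\|B_k\|_{X_1}\le 2M$. For the $X_0$ bound with $k\ge1$, integrate by parts piecewise in $X_0$; this is legitimate because $w$ is absolutely continuous in $X_0$ and the boundary terms cancel by periodicity. This gives $\hat w_k=\frac{1}{2\pi ik}\int w'(\zeta)e^{-ik\zeta}d\zeta$, so $\|\hat w_k\|_{X_0}\le\frac{T}{k\pi}\|\partial_tv\|_{CX_0}$ and therefore $\|B_k\|_{X_0}\le\frac{2T}{k\pi}\|\partial_tv\|_{CX_0}$.

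For \eqref{eq:scaled-jackson}, positivity and normalization of $K_m$ give
\[
\|w(\eta)-K_m*w(\eta)\|_{X_0}\le\int K_m(\zeta)\|w(\eta)-w(\eta-\zeta)\|_{X_0}d\zeta\le\tfrac{T}{\pi}\|\partial_tv\|_{CX_0}\int_{-\pi}^{\pi}|\zeta|K_m(\zeta)\,d\zeta.
\]
The main step is the classical first-moment bound $\int_{-\pi}^{\pi}|\zeta|K_m(\zeta)d\zeta\le C_J/m$ with $C_J=3\pi^3/2$. To prove it, use $\sin(\zeta/2)\ge\zeta/\pi$ on $[0,\pi]$ and $|\sin(m\zeta/2)|\le\min(1,m\zeta/2)$. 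This gives
\[
\int_0^\pi \zeta\Bigl(\frac{\sin(m\zeta/2)}{\sin(\zeta/2)}\Bigr)^4 d\zeta\le\pi^4\int_0^\infty\zeta^{-3}\min\bigl(1,(m\zeta/2)^4\bigr)d\zeta=\pi^4 m^2\cdot\tfrac{1}{2}\cdot\tfrac{1}{2}\cdot 2,
\]
where the $m^2$ comes from scaling. Multiplying by $2\cdot\frac{3}{2\pi m(2m^2+1)}\le\frac{3}{2\pi m^3}$ yields a bound of order $\frac{3\pi^3}{2m}$. The constants should be checked carefully at this point. Finally, $m\ge N_t/2$ gives $C_J/m\le 2C_J/N_t$, and so $\|v-P_{N_t}v\|_{CX_0}\le\frac{2C_JT}{\pi N_t}\|\partial_tv\|_{CX_0}$.
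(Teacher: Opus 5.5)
Your proposal is correct and follows essentially the same route as the paper: even $2\pi$-periodic reflection, convolution with the Jackson kernel $K_m$ for $m=\lfloor N_t/2\rfloor+1$, and identification of the coefficients through \eqref{eq:jackson-fourier}, where your $2\hat w_k$ is the paper's cosine coefficient $a_k$. The $X_0$ decay again comes from integration by parts, and the first-moment bound again uses $\sin(\zeta/2)\ge\zeta/\pi$. The only slip is arithmetic: $\int_0^\infty\zeta^{-3}\min\{1,(m\zeta/2)^4\}\,d\zeta=m^2/4$, not $m^2/2$. The moment is therefore at most $3\pi^3/(8m)$, comfortably below $C_J/m$, so your hedge about the constants is unnecessary.
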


\begin{proof}
Let
\[
w(\theta):=v(T|\theta|/\pi),
\qquad -\pi\le\theta\le\pi,
\]
and extend $w$ evenly and $2\pi$-periodically. Set 
\[
m:=\left\lfloor\frac{N_t}{2}\right\rfloor+1,
\qquad
p_m(\theta)
:=
(K_m*w)(\theta)
=
\int_{-\pi}^{\pi}K_m(\eta)w(\theta-\eta)\,d\eta,
\]
and
\(
P_{N_t}v(t):=
p_m\Bigl(\frac{\pi t}{T}\Bigr).
\)

Since $w$ is Lipschitz in $X_0$ with constant
$T/\pi\|\partial_t v\|_{C X_0}$,
\[
\|p_m(\theta)-w(\theta)\|_{X_0}
\le
\frac{T}{\pi}\|\partial_t v\|_{C X_0}
\int_{-\pi}^{\pi}|\eta|K_m(\eta)\,d\eta.
\]
For $|\eta|\le\pi$, the elementary bounds
\[
|\sin(m\eta/2)|\le \frac{m|\eta|}{2},
\qquad
|\sin(\eta/2)|\ge \frac{|\eta|}{\pi},
\]
together with $2m^2+1\ge2m^2$, give
\(
K_m(\eta)
\le
\frac{3\pi^3}{4}\,m.
\)
On the other hand, using
\[
|\sin(m\eta/2)|\le1,
\qquad
|\sin(\eta/2)|\ge\frac{|\eta|}{\pi},
\]
we obtain
\[
K_m(\eta)
\le
\frac{3\pi^3}{4}\,
m^{-3}|\eta|^{-4}.
\]
Combining the two bounds gives
\begin{equation}
K_m(\eta)
\le
\frac{3\pi^3}{4}
\min\{m,m^{-3}|\eta|^{-4}\}.
\label{eq:jackson-pointwise-bound}
\end{equation}

Splitting the integral at $|\eta|=m^{-1}$, we obtain
\[
\begin{aligned}
\int_{-\pi}^{\pi}|\eta|K_m(\eta)\,d\eta
&\le
\frac{3\pi^3}{2}
\left[
m\int_0^{1/m}\eta\,d\eta
+
m^{-3}\int_{1/m}^{\pi}\eta^{-3}\,d\eta
\right]
\\
&=
\frac{3\pi^3}{2}
\left[
\frac{1}{2m}
+
\frac{1}{2m}
-\frac{1}{2\pi^2m^3}
\right]
\\
&\le
\frac{3\pi^3}{2m}
=
\frac{C_J}{m}.
\end{aligned}
\]
Since $m^{-1}\le2N_t^{-1}$, this proves
\eqref{eq:scaled-jackson}.

It remains to identify the coefficients. Let
\[
a_0
:=
\frac1{2\pi}\int_{-\pi}^{\pi}w(\theta)\,d\theta,
\qquad
a_k
:=
\frac1\pi
\int_{-\pi}^{\pi}w(\theta)\cos(k\theta)\,d\theta,
\quad k\ge1.
\]
Substituting \eqref{eq:jackson-fourier} into the convolution and using
the evenness of $w$ gives
\begin{equation}
p_m(\theta)
=
a_0+
\sum_{k=1}^{2m-2}
\mu_{k,m}a_k\cos(k\theta).
\label{eq:jackson-convolution-expansion}
\end{equation}
Hence
\(
B_0=a_0,
B_k=\mu_{k,m}a_k
\quad(1\le k\le2m-2),
\)
with $B_k=0$ for $2m-2<k\le N_t$.

Clearly,
\(
\|a_0\|_{X_1}\le M,
\|a_k\|_{X_1}\le2M.
\)
Together with $0\le\mu_{k,m}\le1$, this gives the first two bounds in
\eqref{eq:jackson-coefficient-bounds}.

Finally, by integration by parts in $X_0$,
\[
\begin{aligned}
a_k
&=
\frac2T
\int_0^T
v(t)\cos\Bigl(\frac{k\pi t}{T}\Bigr)\,dt
\\
&=
\frac{2}{k\pi}
\left[
v(t)\sin\Bigl(\frac{k\pi t}{T}\Bigr)
\right]_{t=0}^{t=T}
-
\frac{2}{k\pi}
\int_0^T
\partial_t v(t)
\sin\Bigl(\frac{k\pi t}{T}\Bigr)\,dt
\\
&=
-\frac{2}{k\pi}
\int_0^T
\partial_t v(t)
\sin\Bigl(\frac{k\pi t}{T}\Bigr)\,dt,
\end{aligned}
\]
since $\sin(0)=\sin(k\pi)=0$.
and therefore
\(
\|a_k\|_{X_0}
\le
\frac{2T}{k\pi}
\|\partial_t v\|_{C X_0}.
\)
Since $B_k=\mu_{k,m}a_k$ and $|\mu_{k,m}|\le1$, the last estimate in
\eqref{eq:jackson-coefficient-bounds} follows.
\end{proof}

\begin{proposition}[Approximation of the iterates]\label{prop:iterate-approx}
For every compact $K\subset\R^d$ and $n\ge0$, $N_t,N_x\ge1$,
there exists a shallow cosine network
$\bar V^{n,N_t,N_x}$ such that
\(
Q_V\le(2N_t+1)(N_x+1)
\)
and
\[
\sup_{t\in[0,T]}
\norm{V^n(t)-\bar V^{n,N_t,N_x}(t)}_{L^2(K)}
\le
|K|^{1/2} C_V^{\mathrm{it}}
\left(
N_t^{-1}
+
(1+\log(N_t+1))N_x^{-1/2}
\right),
\]
where $ C_V^{\mathrm{it}} = \max\left\{ \frac{2C_JT}{\pi}C_{\mathrm{time}}, M+\frac{2T}{\pi}C_{\mathrm{time}} \right\}. $

Under \eqref{eq:g-regularity}, the same conclusion holds for $u^n$,
with
\(
Q_u\le(2N_t+1)N_x,
\)$ C_u^{\mathrm{it}} = \max\left\{ \frac{2C_JT}{\pi}C_{\mathrm{time},u}, M_u+\frac{2T}{\pi}C_{\mathrm{time},u} \right\}. $
\end{proposition}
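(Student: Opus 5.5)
The plan is to approximate in time first with Proposition~\ref{prop:jackson}, then approximate each spatial coefficient with a Barron cosine network, and finally merge the two with the product-to-sum identity. I apply Proposition~\ref{prop:jackson} to $v=V^n$ with $X_1=A^{s+1}$ and $X_0=A^{s-1}$.

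\textbf{Uniform time-derivative bound.} First I need a bound on $\partial_tV^n$ that does not depend on $n$. Theorem~\ref{thm:linear-wellposedness} gives $V^n\in C([0,T];A^{s+1})\cap C^1([0,T];A^{s-1})$. For $n\ge1$ the equation gives
\[
\partial_tV^{n}=-\tfrac12a:D_x^2V^{n}-F(\cdot;\nabla V^{n-1};V^{n-1}).
\]
Proposition~\ref{prop:invariance} keeps every iterate in $\mathcal V_M(T)$. Then Proposition~\ref{prop:algebra} and Lemma~\ref{lem:nonlinear} give
\[
\|\partial_tV^n\|_{CA^{s-1}}\le \tfrac{d^2}{2}\Lambda_a M+B_M=:C_{\mathrm{time}}.
\]
The index $n=0$ is trivial because $V^0=0$. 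I expect this uniform-in-$n$ bound to be the main bookkeeping obstacle.

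\textbf{Temporal approximation.} Proposition~\ref{prop:jackson} then yields $P_{N_t}V^n=\sum_{k=0}^{N_t}B_k\cos(k\pi t/T)$. Its sup-in-time error in $A^{s-1}$, hence in $L^\infty$ and so in $L^2(K)$ after a factor $|K|^{1/2}$, is at most $\frac{2C_JT}{\pi N_t}C_{\mathrm{time}}$. The coefficients satisfy
\[
\|B_0\|_{A^{0}}\le M,\qquad \|B_k\|_{A^{0}}\le\|B_k\|_{A^{s-1}}\le \frac{2TC_{\mathrm{time}}}{k\pi}\quad(k\ge1).
\]

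\textbf{Spatial approximation.} Write $B_k=b_k+h_k$ with $b_k\in\R$ and $h_k\in B^{s+1}$ real. I would use the standard Barron/Maurey sampling lemma. The representation
\[
h_k(x)=\int|\hat h_k(\xi)|\cos(\xi\cdot x+\arg\hat h_k(\xi))\,d\xi
\]
exhibits $h_k$ as $\|h_k\|_{B^0}$ times an expectation of bounded cosines. Sampling $N_x$ neurons then gives a network $h_{k,N_x}$ with
\[
\|h_k-h_{k,N_x}\|_{L^2(K)}\le |K|^{1/2}\|h_k\|_{B^0}N_x^{-1/2}.
\]
This is applied with the uniform probability measure on $K$, and the bound follows from an expectation-over-samples argument. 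Summing over $k$ and using the $k^{-1}$ coefficient bound together with $\sum_{k\le N_t}k^{-1}\le 1+\log N_t$ gives the total spatial error
\[
|K|^{1/2}\Bigl(M+\tfrac{2T}{\pi}C_{\mathrm{time}}\Bigr)(1+\log(N_t+1))N_x^{-1/2}.
\]
The constants $b_k$ are kept exactly.

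\textbf{Assembling the network and counting neurons.} Each product is rewritten with
\[
\cos(k\pi t/T)\,c\cos(\beta\cdot x+\delta)=\tfrac c2\sum_{l=\pm1}\cos\bigl(k\pi t/T+l(\beta\cdot x+\delta)\bigr).
\]
The constant parts become the $c^n_{k,0}\cos(k\pi t/T)$ neurons, and $b_0$ gives $c^n_{0,0}$. The count is $(N_x+1)$ neurons for $k=0$ plus $(2N_x+1)$ for each $k\ge1$, which is at most $(2N_t+1)(N_x+1)$. Adding the Jackson and Barron errors gives the stated constant $C_V^{\mathrm{it}}$.

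\textbf{The feedback $u^n$.} I repeat the argument with $X_1=B^s$ and $X_0=B^{s-2}$. Assumption \eqref{eq:g-regularity} gives
\[
\partial_tu^n=-\tfrac12R^{-1}\bigl(\partial_tg^\top\nabla V^n+g^\top\nabla\partial_tV^n\bigr).
\]
The second term uses $\nabla\partial_tV^n\in B^{s-2}$, which is meaningful since $s>2$. This gives a uniform $C_{\mathrm{time},u}$, and the bound $\|u^n\|_{CB^s}\le C_u M=:M_u$ follows from Lemma~\ref{lem:nonlinear}. Since $u^n$ has no constant component, no pure-time neurons are needed, so $Q_u\le(2N_t+1)N_x$. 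The sampling is done componentwise with vector coefficients.
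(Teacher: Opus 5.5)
Your proposal is correct and follows the paper's proof essentially step for step: a uniform-in-$n$ bound from the invariant ball, Jackson approximation with $X_1=A^{s+1}$ and $X_0=A^{s-1}$ (and $B^s$, $B^{s-2}$ for $u^n$), one Barron sampling per temporal mode with the harmonic sum producing the logarithm, constants kept exactly, and the product-to-sum identity for the neuron count. One small point: the factor $\frac{d^2}{2}$ in your $C_{\mathrm{time}}$ is unnecessary, since matrix norms are entrywise $\ell^1$ and so $\|a:D_x^2V\|_{A^{s-1}}\le\Lambda_a\|V\|_{A^{s+1}}$, which gives the paper's $C_{\mathrm{time}}=\frac12\Lambda_aM+B_M$; your larger constant is nevertheless still valid.
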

\begin{proof}
By the uniform estimates obtained in Section~4,
\[
\sup_{n\ge0}\norm{V^n}_{C([0,T];A^{s+1})}\le M,
\qquad
\sup_{n\ge0}\norm{\partial_tV^n}_{C([0,T];A^{s-1})}
\le C_{\mathrm{time}}.
\]
For $n\ge1$ the iteration equation gives
$\partial_tV^n=-\tfrac12a:D_x^2V^n-F(t;\nabla V^{n-1};V^{n-1})$.
The derivative and module estimates therefore allow
$C_{\mathrm{time}}:=\tfrac12\Lambda_a M+B_M$.
For $n=0$, the bound holds because $V^0=0$.
Applying Proposition~\ref{prop:jackson} with
\(
X_1=A^{s+1},
X_0=A^{s-1},
\)
gives
\(
P_{N_t}V^n(t)
=
\sum_{k=0}^{N_t}
B_k^n\cos\Bigl(\frac{k\pi t}{T}\Bigr).
\)
By \eqref{eq:jackson-coefficient-bounds},
\[
\|B_0^n\|_{A^{s+1}}\le M,
\qquad
\|B_k^n\|_{A^{s+1}}\le2M,
\qquad
\|B_k^n\|_{A^{s-1}}
\le\frac{2TC_{\mathrm{time}}}{k\pi},
\quad 1\le k\le N_t.
\]
Meanwhile, \eqref{eq:scaled-jackson} yields
\[
\sup_{t\in[0,T]}
\|V^n(t)-P_{N_t}V^n(t)\|_{A^{s-1}}
\le CN_t^{-1}.
\]
Write $B_k^n=c_k^n+h_k^n$ with $h_k^n\in B^{s+1}$.
Then $\|h_0^n\|_{B^0}\le M$, while, since $s>2$,
\eqref{eq:jackson-coefficient-bounds} and $A^{s-1}\hookrightarrow A^0$ give
\[
\|h_k^n\|_{B^0}
\le \frac{2TC_{\mathrm{time}}}{k\pi},
\qquad k\ge1.
\]
Applying Lemma~\ref{thm:cosine}, for each temporal mode there exists an $N_x$-neuron
cosine network $h_k^{n,N_x}$ such that
\[
\norm{h_k^n-h_k^{n,N_x}}_{L^2(K)}
\le
|K|^{1/2}\|h_k^n\|_{B^0}N_x^{-1/2}.
\]
Setting
\(
\Psi_k^{n,N_x}:=c_k^n+h_k^{n,N_x},
\bar V^{n,N_t,N_x}(t)
:=
\sum_{k=0}^{N_t}
\Psi_k^{n,N_x}
\cos\Bigl(\frac{k\pi t}{T}\Bigr),
\)
we obtain
\[
\sup_{t\in[0,T]}
\norm{P_{N_t}V^n(t)-\bar V^{n,N_t,N_x}(t)}_{L^2(K)}
\le|K|^{1/2} C_{V}^{it}(1+\log(N_t+1))N_x^{-1/2}.
\]
Since $A^{s-1}\hookrightarrow L^\infty(\R^d)$,  the triangle inequality yields
\[
\sup_{t\in[0,T]}
\norm{V^n(t)-\bar V^{n,N_t,N_x}(t)}_{L^2(K)}
\le|K|^{1/2}C_{V}^{it}\bigl(N_t^{-1}+(1+\log(N_t+1))N_x^{-1/2}\bigr).
\]
Finally, the product-to-sum identity
\(
\cos a\cos b=\frac12\bigl(\cos(a+b)+\cos(a-b)\bigr)
\)
shows that $\bar V^{n,N_t,N_x}$ is a shallow cosine network with
\(
Q_V\le(2N_t+1)(N_x+1).
\)
Under \eqref{eq:g-regularity}, differentiation of
$u^n=-\frac12R^{-1}g^\top\nabla V^n$ gives
\[
\partial_tu^n=-\frac12R^{-1}
\bigl[(\partial_tg)^\top\nabla V^n+g^\top\nabla(\partial_tV^n)\bigr],
\]
and hence, for finite constants $M_u$ and $C_{\mathrm{time},u}$ independent of $n$,
\[
\sup_n\norm{u^n}_{C([0,T];B^s)}\le M_u,
\qquad
\sup_n\norm{\partial_tu^n}_{C([0,T];B^{s-2})}
\le C_{\mathrm{time},u}.
\]
Apply Proposition~\ref{prop:jackson} to $u^n$. If its vector-valued
temporal coefficients are denoted by $D_k^n$, then
Proposition~\ref{prop:jackson} and $B^{s-2}\hookrightarrow B^0$
give
\[
\|D_0^n\|_{B^0}\le M_u,
\qquad
\|D_k^n\|_{B^0}
\le \frac{2TC_{\mathrm{time},u}}{k\pi}\quad(k\ge1).
\]
The preceding bounds also show that
\(
u^n\in
C([0,T];B^s)
\cap
C^1([0,T];B^{s-2})
\)
uniformly in $n$. Let
\(
P_{N_t}u^n(t)
=
\sum_{k=0}^{N_t}
D_k^n\cos\Bigl(\frac{k\pi t}{T}\Bigr)
\)
be the Jackson polynomial from Proposition~\ref{prop:jackson}.
Then \eqref{eq:scaled-jackson} gives
\[
\sup_{t\in[0,T]}
\|u^n(t)-P_{N_t}u^n(t)\|_{B^{s-2}}
\le
CN_t^{-1}.
\]
Applying Lemma~\ref{thm:cosine} once to each coefficient and summing as above yields
\[
\sup_{t\in[0,T]}
\norm{u^n(t)-\bar u^{n,N_t,N_x}(t)}_{L^2(K;\R^m)}
\le
|K|^{1/2}C_u^{it}\bigl(N_t^{-1}+(1+\log(N_t+1))N_x^{-1/2}\bigr),
\]
with
\(
Q_u\le(2N_t+1)N_x.
\)
\end{proof}
\begin{proof}[Proof of Theorem~\ref{thm:approx}]
Proposition~\ref{prop:iterate-approx} provides the two networks and their
approximation bounds. The embeddings into $L^\infty$, the Gamma estimate,
and the triangle inequality give
\[
\|V^*-\bar V^{n,N_t,N_x}\|_{C L^2(K)}
\le |K|^{1/2}C_\Gamma\frac{\Lambda^n}{\Gamma(n/2+1)}
 +|K|^{1/2}C_{V}^{it}\bigl(N_t^{-1}+(1+\log(N_t+1))N_x^{-1/2}\bigr),
\]
\[
\|u^*-\bar u^{n,N_t,N_x}\|_{C L^2(K)}
\le |K|^{1/2}C_uC_\Gamma\frac{\Lambda^n}{\Gamma(n/2+1)}
 +|K|^{1/2}C_u^{it}\bigl(N_t^{-1}+(1+\log(N_t+1))N_x^{-1/2}\bigr)
\]
where $ C_V=\max\{C_\Gamma,C_V^{\mathrm{it}}\},  C_u^{\mathrm{app}} = \max\{C_uC_\Gamma,C_u^{\mathrm{it}}\}. $ For $n\ge1$, $V^n(T)=\varphi$, so the terminal bound follows directly from the iterate approximation, without an iteration-error term.
\end{proof}

\appendix
\section{Technical proofs and auxiliary estimates}
\subsection{Supplementary proofs}
\begin{proof}[Proof of Proposition~\ref{prop:algebra}]
For $a_j=c_j+h_j$, expand
$a_1a_2=c_1c_2+c_1h_2+c_2h_1+h_1h_2$ and apply the $B^s$ product bound.
The module estimate is the same expansion with one constant equal to zero.
Differentiation kills the constant component, and
$|\xi_i|,\ |\xi_i\xi_j|\le(1+|\xi|),\ (1+|\xi|)^2$, respectively,
give the derivative bounds.
\end{proof}

\begin{proof}[Proof of Proposition~\ref{prop:diffusion}]
The algebra estimate gives $\|\sigma\sigma^\top\|_{A^{s+2}}\le\|\sigma\|_{A^{s+2}}^2$.
Expanding $a(t)-a(r)$ gives time continuity.
Finally, $z^\top a(t,x)z\le |z|^2\sum_{ij}\|a_{ij}(t)\|_\infty
\le\Lambda_a|z|^2$; the lower bound is the ellipticity assumption.
\end{proof}

\begin{proof}[Proof of the remaining estimates in Proposition~\ref{prop:parametrix}]
\mbox{}\par The symbol lemma and Gaussian bound give
\[
\|P_{t,r}h\|_{B^v}\le C_M\int e^{-(\lambda/4)(r-t)|\xi|^2}
|\widehat h(\xi)|(1+|\xi|)^v\,d\xi.
\]
Take $v=s$ for \eqref{eq:pt-bound}, and $v=s+1$, $h=\varphi$ for
\eqref{eq:pt-terminal}. For smoothing, use
\[
(1+|\xi|)e^{-(\lambda/4)(r-t)|\xi|^2}
\le(r-t)^{-1/2}\left[\sqrt{\bar T}+\sqrt{\frac{2}{e\lambda}}\right].
\]
This proves \eqref{eq:pt-smoothing} with the stated $C_P$.
The weighted-majorant formulation of the symbol lemma applies even when
$h$ is initially only in $B^s$; equivalently, one may first truncate
$\widehat h$ and pass to the limit.
\end{proof}
\begin{proof}[Proof of Lemma~\ref{lem:diagonal}]
Fix $0\le v\le s+2$ and $h\in B^v$. The embedding
$A^{s+2}\hookrightarrow A^v$ permits the Gaussian algebra bound at
exponent $v$. The identity
\[
e^{-q\vartheta}-1=-q\vartheta\int_0^1e^{-\theta q\vartheta}\,d\theta
\]
and the uniform bound $\|e^{-q\vartheta}\|_{A^v}\le C_M$ give
\[
\|M_{t,t+\varepsilon}(\cdot;\xi)-1\|_{A^v}
\le(C_M+1)\min\{\Lambda_a\varepsilon|\xi|^2,1\}.
\]
The symbol lemma yields
\[
\sup_{0\le t\le T-\varepsilon}
\|(P_{t,t+\varepsilon}-I)h\|_{B^v}
\le(C_M+1)\int\min\{\Lambda_a\varepsilon|\xi|^2,1\}
|\widehat h(\xi)|(1+|\xi|)^v\,d\xi\longrightarrow0
\]
by dominated convergence. The case $\xi=0$ is immediate.
\end{proof}

\subsection{Wiener--L\'evy and GRS Lemmas}
We record the two standard ingredients used in the Wiener--GRS step of the proof in Section 3.3.
\begin{lemma}[Classical Wiener--L\'evy inversion]\label{lem:wienerlevy}
Let $\psi=c+h\in A^0(\R^d;\C)$. If
\[
\inf_{x\in\R^d}|\psi(x)|>0,
\]
then
\[
\psi^{-1}\in A^0(\R^d;\C).
\]
\end{lemma}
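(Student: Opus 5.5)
Write $\psi=c+h$ with $c\in\C$ and $h\in B^0(\R^d;\C)$. The argument is the classical Wiener theorem, run in the Gelfand theory of the unitized convolution algebra $\widetilde L{}^1:=\C\delta_0\oplus L^1(\R^d)$. Via $\mathcal F$ this algebra is isometrically identified with $A^0$.

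\textbf{Step 1: $c\neq 0$.} By Remark~\ref{rem:unique} and Proposition~\ref{prop:basic}, $h\in C_0(\R^d)$. Hence $\psi(x)\to c$ as $|x|\to\infty$, and $\inf_x|\psi(x)|>0$ forces $c\neq0$.

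\textbf{Step 2: the characters.} The characters of the commutative unital Banach algebra $\widetilde L{}^1$ are the point evaluations $c\delta_0+u\mapsto c+\int u(\xi)e^{ix\cdot\xi}\,d\xi$ for $x\in\R^d$, together with the character at infinity $c\delta_0+u\mapsto c$. This follows from the standard identification of the Gelfand spectrum of $L^1(\R^d)$ with $\R^d$: every nonzero character of $L^1$ is $u\mapsto\int u\,e^{ix\cdot\xi}$. A character of the unitization either restricts to such a character or vanishes on $L^1$. Under our Fourier convention, evaluation at $x$ corresponds to the value $\psi(x)$, and the character at infinity gives the value $c$.

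\textbf{Step 3: invertibility.} The Gelfand transform of $c\delta_0+\hat h$ is therefore $x\mapsto\psi(x)$ on $\R^d$, extended by $c$ at infinity. This function vanishes nowhere: on $\R^d$ by hypothesis, and at infinity by Step~1. By Gelfand's theorem, an element of a commutative unital Banach algebra is invertible if and only if no character annihilates it. Hence $c\delta_0+\hat h$ has an inverse $c'\delta_0+v$ in $\widetilde L{}^1$.

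\textbf{Step 4: transfer back.} Transferring back through $\mathcal F$, the function $c'+\check v\in A^0$ satisfies $(c+h)(c'+\check v)=1$ pointwise. Here we use that convolution on the Fourier side corresponds to pointwise multiplication of the functions. Consequently $\psi^{-1}=c'+\check v\in A^0$. Comparing constants at infinity gives $c'=c^{-1}$.

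\textbf{Main obstacle.} The key step is the identification of the maximal ideal space in Step~2, especially checking that no exotic characters exist beyond $\R^d\cup\{\infty\}$. This is classical (Wiener's lemma, e.g.\ Rudin's treatment of $L^1(\R^d)$) and may simply be cited. Alternatively, one could avoid Gelfand theory with a localized Neumann-series argument: a partition of unity plus the $C_0$ decay of $h$ would let one invert $\psi$ locally and near infinity and then patch. The Gelfand route is cleaner, so we take it.
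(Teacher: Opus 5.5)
Your proof is correct, but it reaches the conclusion through a different key result than the paper. The paper works in the nonunital algebra $B^0$. After noting $c\neq0$, it sets $b=c^{-1}h$ and observes that $K=\overline{b(\R^d)}$ is compact, contains $0$, and omits $-1$. It then applies the Wiener--L\'evy theorem to the holomorphic function $\Phi(z)=(1+z)^{-1}-1$, which satisfies $\Phi(0)=0$. This gives $\Phi(b)\in B^0$ and hence $\psi^{-1}=c^{-1}(1+\Phi(b))$.

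You instead work in the unitization $\C\delta_0\oplus L^1(\R^d)$, identify its maximal ideal space with $\R^d\cup\{\infty\}$, and apply Gelfand's invertibility criterion directly. In effect your argument is the proof of the Wiener--L\'evy theorem specialised to $z\mapsto z^{-1}$. It needs only the classical description of the characters of $L^1(\R^d)$ and their trivial extension to the unitization, not the holomorphic functional calculus. It also matches the unitized Fourier-side identification $\mathcal F:A^s\to\widetilde L{}^1_{w_s}$ that the paper sets up in Section~3.3 and later combines with GRS spectral invariance. A further benefit is that it shows the hypothesis is used only through pointwise nonvanishing of $\psi$ on $\R^d$ together with $c\neq0$. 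Because $h\in C_0$, these two conditions are equivalent to the uniform lower bound.

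The paper's route is shorter on the page, but it rests entirely on a citation. Its hypotheses (holomorphy on a neighbourhood of the closed range, and $\Phi(0)=0$ in the nonunital setting) have to be checked, and the normalisation $b=c^{-1}h$ is what accomplishes that.
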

\begin{proof}
Since $h\in C_0$, the lower bound forces $c\neq 0$. Set $b=c^{-1}h$. Since $b\in C_0(\R^d)$,
\[
K:=\overline{b(\R^d)}
\]
is compact and contains $0$. Moreover, the lower bound on $|\psi|=|c|\,|1+b|$ implies that $-1\notin K$. Hence
\[
\Phi(z):=(1+z)^{-1}-1
\]
is holomorphic on an open neighborhood of $K$ and satisfies $\Phi(0)=0$. The classical Wiener--L\'evy theorem \cite[Chapter~6]{rudin1962}, in its
Fourier--$L^1$ form on locally compact abelian groups
\cite[Theorem~2 and the subsequent discussion]{favorov2022}, gives $\Phi(b)\in B^0$. Consequently, $\psi^{-1}=c^{-1}\bigl(1+\Phi(b)\bigr)\in A^0$.
\end{proof}
\begin{lemma}[GRS spectral invariance]\label{lem:grs}
Let $\rho\ge 0$ and
\(
w_\rho(\xi):=(1+|\xi|)^\rho.
\)
For every $u\in L^1_{w_\rho}(\R^d)$,
\(
\operatorname{spec}_{L^1}(u)=\operatorname{spec}_{L^1_{w_\rho}}(u),
\)
where both spectra are computed in the corresponding unitizations.
\end{lemma}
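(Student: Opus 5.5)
The statement to prove is Lemma~\ref{lem:grs}: for $u\in L^1_{w_\rho}(\R^d)$, $\operatorname{spec}_{L^1}(u)=\operatorname{spec}_{L^1_{w_\rho}}(u)$, with both spectra taken in the unitizations. The weight $w_\rho(\xi)=(1+|\xi|)^\rho$ is submultiplicative and has subexponential growth. It therefore satisfies the Gelfand--Raikov--Shilov (GRS) condition
\[
\lim_{n\to\infty}w_\rho(n\xi)^{1/n}=\lim_{n\to\infty}(1+n|\xi|)^{\rho/n}=1\qquad(\xi\in\R^d).
\]
The plan is to compare the Gelfand spaces of the two commutative unital Banach algebras $\widetilde L{}^1:=\C\delta_0\oplus L^1$ and $\widetilde L{}^1_{w_\rho}$. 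Spectra in a commutative unital Banach algebra are the ranges of the Gelfand transforms, so it is enough to show that the Gelfand transforms of $u$ have the same range in both algebras.

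\textbf{Step 1: the easy inclusion.} Since $w_\rho\ge1$, the algebra $\widetilde L{}^1_{w_\rho}$ is a unital subalgebra of $\widetilde L{}^1$. Invertibility in the smaller algebra implies invertibility in the larger one, so
\[
\operatorname{spec}_{L^1}(u)\subset\operatorname{spec}_{L^1_{w_\rho}}(u).
\]

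\textbf{Step 2: characters of the weighted algebra.} A nonzero multiplicative functional on $L^1_{w_\rho}$ is given by $u\mapsto\int u(\xi)\chi(\xi)\,d\xi$, where $\chi$ is a continuous character of $\R^d$ with $|\chi|\le Cw_\rho$. Write $\chi(\xi)=e^{z\cdot\xi}$ with $z\in\C^d$. Multiplicativity gives $\chi(n\xi)=\chi(\xi)^n$, hence
\[
|\chi(\xi)|\le\bigl(Cw_\rho(n\xi)\bigr)^{1/n}\longrightarrow1,
\]
and applying this to $-\xi$ as well forces $|\chi|\equiv1$. So $\chi(\xi)=e^{-ix\cdot\xi}$ for some $x\in\R^d$, and the functional is just the evaluation $\hat u\mapsto$ (inverse Fourier transform of $u$) at $x$.

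\textbf{Step 3: the reverse inclusion.} By Step 2, the Gelfand space of $\widetilde L{}^1_{w_\rho}$ is the one-point compactification $\R^d\cup\{\infty\}$, where the point at infinity is the character vanishing on $L^1_{w_\rho}$ and sending $c\delta_0\mapsto c$. This is exactly the Gelfand space of $\widetilde L{}^1$. Both spectra are therefore equal to the closure of the range of the (inverse) Fourier transform of $u$, together with $0$, the value at infinity. Equivalently, if $\lambda\delta_0-u$ is invertible in $\widetilde L{}^1$, then its Gelfand transform vanishes nowhere on $\R^d\cup\{\infty\}$, hence nowhere on the Gelfand space of $\widetilde L{}^1_{w_\rho}$, so it is invertible there too.

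The main obstacle is Step 2, the description of all characters of the weighted algebra. It needs the standard fact that characters of weighted $L^1$ algebras are given by bounded measurable multiplicative functions dominated by the weight, obtained via Riesz representation in $(L^1_{w})^*=L^\infty_{1/w}$. It also needs the observation that multiplicativity of $\chi$ follows almost everywhere, and hence continuity after modification. Rather than grind through this, I would cite the classical GRS theorem, for example Gröchenig's survey on Wiener's lemma and GRS weights, and spell out only the verification of the GRS condition displayed above.
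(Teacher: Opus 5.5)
Your proof is correct, but it takes a different route from the paper's.

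\textbf{The paper's route.} The paper does not analyze characters at all. It checks the hypotheses of \cite[Theorem 1.3(iv)]{fendler2006}: $\R^d$ is compactly generated and of polynomial growth, and $w_\rho$ is continuous, $\ge 1$, symmetric, submultiplicative and GRS. It then reads off spectral invariance from that theorem. That theorem is designed for possibly non-abelian groups and works through symmetry of $L^1_w(G)$.

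\textbf{Your route.} You exploit commutativity. Spectra in $\widetilde L{}^1$ and $\widetilde L{}^1_{w_\rho}$ are ranges of Gelfand transforms. The GRS condition excludes the non-unitary exponentials $e^{z\cdot\xi}$ with $\operatorname{Re}z\neq0$, so restriction gives a bijection between the two character sets. The only ingredient you share with the paper is the computation $\lim_n(1+n|\xi|)^{\rho/n}=1$.

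\textbf{What your route buys.} You need no polynomial-growth or compact-generation input, and you make visible exactly where the GRS condition acts. You also obtain an explicit description of the spectrum: $\operatorname{spec}(c\delta_0+u)=\{c\}\cup\{c+h(x):x\in\R^d\}$ with $h=\mathcal F^{-1}u$. In particular, $\operatorname{spec}_{A^\rho}(\vartheta)=\overline{\vartheta(\R^d)}$ directly. Your Steps 1--3 therefore also subsume Lemma~\ref{lem:wienerlevy}, which is the case $\rho=0$ of the same Gelfand description. The separate Wiener--L\'evy step in the proof of Proposition~\ref{prop:frozen} would become unnecessary.

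\textbf{What it costs.} You must supply or cite the standard description of characters of weighted $L^1$ algebras. That means representing them through $(L^1_w)^*=L^\infty_{1/w}$ and upgrading the a.e.\ multiplicative kernel to a continuous homomorphism $\R^d\to\C^\times$. The paper's proof, by contrast, is a single citation. If you end by citing a GRS spectral-invariance theorem wholesale, as your last sentence suggests, your proof collapses to the paper's. What distinguishes your proof is the Gelfand-theoretic argument in Steps 1--3.
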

\begin{proof}
The additive group $(\R^d,+)$ is locally compact and is generated by the compact unit ball; moreover, the Haar measure of its $n$-fold sum grows like $n^d$, so the group has polynomial growth. The weight $w_\rho$ is continuous, takes values in $[1,\infty)$, is symmetric, and is submultiplicative because
\[
1+|\xi+\eta|\le(1+|\xi|)(1+|\eta|).
\]
Finally, since the group power of $\xi$ is the sum $n\xi$,
\[
\lim_{n\to\infty}w_\rho(n\xi)^{1/n}=\lim_{n\to\infty}(1+n|\xi|)^{\rho/n}=1.
\]
Thus \cite[Theorem 1.3(iv)]{fendler2006} gives the stated spectral identity in the corresponding unitizations.
\end{proof}
\subsection{Whole-Space Parabolic Uniqueness}
\begin{lemma}[Whole-space parabolic uniqueness by the maximum principle]\label{lem:uniqueness}
Let
\[
a=a^{\top}\in C\bigl([0,T]\times\R^d;\R^{d\times d}\bigr),\qquad b\in C\bigl([0,T]\times\R^d;\R^d\bigr),
\]
and assume that $a$ and $b$ are bounded and that $a$ is uniformly elliptic. If
\[
z\in C^{1,2}\bigl([0,T]\times\R^d\bigr)\cap L^\infty\bigl([0,T]\times\R^d\bigr)
\]
satisfies
\begin{equation}
\partial_tz(t;x)+\frac12\,a(t;x):D_x^2z(t;x)+b(t;x)\cdot\nabla_xz(t;x)=0,\qquad z(T;x)=0,\label{eq:parabolic-uniqueness}
\end{equation}
then
\(
z\equiv 0\,\text{on }[0,T]\times\R^d.
\)
\end{lemma}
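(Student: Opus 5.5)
We argue by a weak maximum principle with an exponentially growing barrier that compensates for the unbounded domain. Both $z$ and $-z$ satisfy \eqref{eq:parabolic-uniqueness}, so it suffices to show $z\le 0$. Set $M:=\|z\|_{L^\infty}$ and let $\Lambda_a',\Lambda_b$ bound $|a|$ and $|b|$. For $\varepsilon>0$ introduce the comparison function
\[
w_\varepsilon(t,x):=z(t,x)-\varepsilon e^{\mu(T-t)}\bigl(1+|x|^2\bigr),
\]
with $\mu>0$ to be chosen. A direct computation gives
\[
\bigl(\partial_t+\tfrac12 a:D_x^2+b\cdot\nabla_x\bigr)\bigl[e^{\mu(T-t)}(1+|x|^2)\bigr]
=e^{\mu(T-t)}\bigl(-\mu(1+|x|^2)+\operatorname{tr}a+2b\cdot x\bigr).
\]
Using $2|b\cdot x|\le \Lambda_b(1+|x|^2)$ and $\operatorname{tr}a\le d\Lambda_a'$, this is strictly negative once $\mu>d\Lambda_a'+\Lambda_b+1$. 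Hence $\mathcal L w_\varepsilon>0$, where $\mathcal L:=\partial_t+\frac12a:D_x^2+b\cdot\nabla_x$.

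Next we localize. Since $z$ is bounded, $w_\varepsilon<0$ whenever $|x|^2>M/\varepsilon$. At $t=T$ we have $w_\varepsilon(T,\cdot)=-\varepsilon(1+|x|^2)<0$. Suppose now that $\sup w_\varepsilon>0$. The supremum is then attained at some point $(t_0,x_0)$ of the compact set $[0,T]\times\overline{B}_{\sqrt{M/\varepsilon}}$, with $x_0$ interior and $t_0<T$. At this point $\nabla_xw_\varepsilon=0$ and $D_x^2w_\varepsilon\le0$, so ellipticity of $a$ gives $a:D_x^2w_\varepsilon\le0$ (the trace of the product of a positive semidefinite and a negative semidefinite matrix is nonpositive). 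Also $\partial_tw_\varepsilon(t_0,x_0)\le 0$: if $t_0\in(0,T)$ the derivative vanishes, and if $t_0=0$ the one-sided derivative is $\le0$ because $t\mapsto w_\varepsilon(t,x_0)$ attains its maximum at the left endpoint. It follows that $\mathcal L w_\varepsilon(t_0,x_0)\le0$, contradicting $\mathcal L w_\varepsilon>0$.

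Therefore $w_\varepsilon\le0$ everywhere, that is, $z\le\varepsilon e^{\mu T}(1+|x|^2)$ for every $\varepsilon>0$. Letting $\varepsilon\downarrow0$ gives $z\le0$, and the same argument applied to $-z$ yields $z\equiv0$.

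The main obstacle is the sign bookkeeping for a terminal-value problem: because time runs backward, maxima at $t_0=0$ must be handled through the one-sided derivative, and the barrier must make $\mathcal L w_\varepsilon$ strictly positive rather than merely nonnegative. Uniform ellipticity is in fact not needed, only $a\ge0$ and boundedness of $a$ and $b$; the stated hypotheses are more than sufficient.
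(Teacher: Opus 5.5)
Your proposal is correct and is essentially the paper's argument. The paper reverses time and uses the same barrier $\Psi=e^{C\tau}(1+|x|^2)$ (your $e^{\mu(T-t)}(1+|x|^2)$), applies the maximum principle on cylinders $[0,T]\times B_R$, and then lets $R\to\infty$ and $\varepsilon\downarrow0$; you instead carry out the maximum-principle step by hand at a maximum point, using the boundedness of $z$ to localize to a fixed ball, which is a self-contained rendering of the same idea.
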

\begin{proof}
Reverse time: $w(\tau,x)=z(T-\tau,x)$ solves
$\mathscr Lw=0$, $w(0)=0$, where
$\mathscr L=\partial_\tau-\tfrac12\widetilde a:D^2-\widetilde b\cdot\nabla$.
For $C>\|\operatorname{tr}a\|_\infty+\|b\|_\infty$, the function
$\Psi=e^{C\tau}(1+|x|^2)$ satisfies
\[
\mathscr L\Psi=e^{C\tau}
[C(1+|x|^2)-\operatorname{tr}\widetilde a-2\widetilde b\cdot x]>0.
\]
For fixed $\varepsilon>0$ and all sufficiently large $R$,
$w-\varepsilon\Psi<0$ on the parabolic boundary of $[0,T]\times B_R$.
The maximum principle gives $w\le\varepsilon\Psi$ there.
Let $R\to\infty$ and then $\varepsilon\downarrow0$; repeat for $-w$.
\end{proof}
\subsection{Approximation tool}
\begin{lemma}[Vector Barron approximation]
\label{thm:cosine}
Let $h\in B^0(\R^d;\R^m)$ be real-valued, let $K\subset\R^d$ be compact,
and let $N\ge1$. There is an $N$-neuron cosine network $h_N$ with real
vector output coefficients such that
\[
\|h-h_N\|_{L^2(K;\R^m)}
\le |K|^{1/2}\|h\|_{B^0(\R^d;\R^m)}N^{-1/2},
\]
where the target $L^2$ norm uses the Euclidean vector norm.
\end{lemma}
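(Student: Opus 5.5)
The approach is Maurey's empirical method applied to the Fourier representation of a real-valued Barron function. Write $h(x)=\int_{\R^d}\hat h(\xi)e^{ix\cdot\xi}\,d\xi$ with $\hat h\in L^1(\R^d;\C^m)$. Since $h$ is real-valued, we may take real parts. Put $\hat h_k(\xi)=|\hat h_k(\xi)|e^{i\theta_k(\xi)}$ componentwise. This gives
\[
h_k(x)=\int_{\R^d}|\hat h_k(\xi)|\cos\bigl(\xi\cdot x+\theta_k(\xi)\bigr)\,d\xi,\qquad k=1,\dots,m.
\]
Set $Z:=\norm{h}_{B^0(\R^d;\R^m)}=\sum_k\int|\hat h_k|\,d\xi$. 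If $Z=0$, we take $h_N=0$. Otherwise, define the probability measure $\mu$ on $\{1,\dots,m\}\times\R^d$ by $d\mu(k,\xi)=Z^{-1}|\hat h_k(\xi)|\,d\xi$. Then
\[
h(x)=\E_{(k,\xi)\sim\mu}\bigl[Z\,e_k\cos(\xi\cdot x+\theta_k(\xi))\bigr],
\]
where $e_k$ is the $k$-th standard basis vector of $\R^m$. Every realization is a single cosine neuron with real vector coefficient $Ze_k$. Its Euclidean norm is pointwise at most $Z$.

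Next, I draw $N$ i.i.d. samples $(k_j,\xi_j)\sim\mu$ and set
\[
h_N(x)=\frac1N\sum_{j=1}^N Z e_{k_j}\cos(\xi_j\cdot x+\theta_{k_j}(\xi_j)).
\]
This is an $N$-neuron cosine network with real vector output coefficients. Fubini and independence give
\[
\E\norm{h-h_N}_{L^2(K;\R^m)}^2=\frac1N\int_K\mathrm{Var}\le\frac1N\int_K\E|Z e_k\cos(\cdot)|^2\,dx\le\frac{|K|Z^2}{N}.
\]
Hence some realization satisfies $\norm{h-h_N}_{L^2(K;\R^m)}\le|K|^{1/2}ZN^{-1/2}$, which is the claimed bound.

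The only delicate step is measurability. Both the phase selection $\theta_k(\xi)$ and the integrand $(x,(k_j,\xi_j))\mapsto|h(x)-h_N(x)|^2$ need to be jointly measurable, so that Fubini and the "exists a realization" argument apply. I expect this to be routine. Take $\theta_k=\arg\hat h_k$ with a Borel branch of the argument, and use the fact that the integrand is continuous in $x$ and Borel in the samples.

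Mixing components through the index $k$ is what keeps the constant equal to the $\ell^1$-type norm $\norm{h}_{B^0}$, with no factor $m$. It is also why one neuron carries a single $e_k$ direction.
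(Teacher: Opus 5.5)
Your proposal is correct and is essentially the paper's proof: the paper also samples $(j,\xi)$ with density $|\widehat h_j(\xi)|/S$, uses the real atom $S e_j\cos(\xi\cdot x+\arg\widehat h_j(\xi))$, bounds the mean-square $L^2(K)$ error by $S^2|K|/N$, and takes one realization. The only difference is that the paper cites the Hilbert-space sampling lemma of Feng--Lu for the variance step, whereas you carry out the Fubini/independence computation directly.
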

\begin{proof}
Use Hilbert-space sampling \cite[Proposition~3.1]{feng2026}. Set $S=\sum_j\int|\widehat h_j(\xi)|\,d\xi$;
if $S=0$, take the zero network. Otherwise sample $(j,\xi)$ with density
$|\widehat h_j(\xi)|/S$ and use the real atom
$A(x)=S e_j\cos(\xi\cdot x+\arg\widehat h_j(\xi))$.
Then $\mathbb EA=h$ and $\|A\|_{L^2(K)}\le S|K|^{1/2}$.
For $N$ independent atoms,
$\mathbb E\|N^{-1}\sum_{q=1}^N A_q-h\|_{L^2(K)}^2
\le S^2|K|/N$. One realization gives the result.
\end{proof}
\section*{Acknowledgments}
The authors used artificial intelligence tools to assist with language polishing, translation, and the derivation of some routine proof steps. All mathematical arguments and the final manuscript were independently reviewed and verified by the authors. The authors assume responsibility for all content.

\bibliographystyle{plainnat}
\bibliography{reference}

@article{anastassiou2007,
  author  = {Anastassiou, George A. and Gal, Sorin G.},
  title   = {On the best approximation of vector valued functions by polynomials with coefficients in vector spaces},
  journal = {Annali di Matematica Pura ed Applicata},
  volume  = {186},
  number  = {2},
  pages   = {251--265},
  year    = {2007},
  doi     = {10.1007/s10231-006-0003-4}
}

@article{aronson1967,
  author  = {Aronson, Donald G.},
  title   = {Bounds for the fundamental solution of a parabolic equation},
  journal = {Bulletin of the American Mathematical Society},
  volume  = {73},
  number  = {6},
  pages   = {890--896},
  year    = {1967},
  doi     = {10.1090/S0002-9904-1967-11830-5}
}

@article{barron1993,
  author  = {Barron, Andrew R.},
  title   = {Universal approximation bounds for superpositions of a sigmoidal function},
  journal = {IEEE Transactions on Information Theory},
  volume  = {39},
  number  = {3},
  pages   = {930--945},
  year    = {1993},
  doi     = {10.1109/18.256500}
}

@article{chen2023,
  author  = {Chen, Ziang and Lu, Jianfeng and Lu, Yulong and Zhou, Shengxuan},
  title   = {A regularity theory for static {Schr\"odinger} equations on $\mathbb{R}^d$ in spectral {Barron} spaces},
  journal = {SIAM Journal on Mathematical Analysis},
  volume  = {55},
  number  = {1},
  pages   = {557--570},
  year    = {2023}
}

@misc{chen2026a,
  author       = {Chen, Ziang and Huang, Liqiang and Yang, Mengxuan and Zhou, Shengxuan},
  title        = {Regularity of Second-Order Elliptic PDEs in Spectral {Barron} Spaces},
  howpublished = {arXiv preprint},
  note         = {arXiv:2602.19381},
  year         = {2026}
}

@misc{choi2026,
  author       = {Choi, Jae-Hwan and Lim, Hyojae and Seo, Jinsol and Sim, Young-Jin and Song, Changhoon},
  title        = {Neural Network Approximation of Solutions to Fractional Parabolic Partial Differential Equations},
  howpublished = {arXiv preprint},
  note         = {arXiv:2607.27781},
  year         = {2026}
}

@book{devore1993,
  author    = {DeVore, Ronald A. and Lorentz, George G.},
  title     = {Constructive Approximation},
  publisher = {Springer},
  address   = {Berlin},
  year      = {1993}
}

@inproceedings{e2021barron,
  author    = {E, Weinan and Wojtowytsch, Stephan},
  title     = {Some observations on high-dimensional partial differential equations with {Barron} data},
  booktitle = {Proceedings of the 2nd Mathematical and Scientific Machine Learning Conference (MSML 2021)},
  series    = {Proceedings of Machine Learning Research},
  volume    = {145},
  pages     = {253--269},
  publisher = {PMLR},
  year      = {2022}
}

@book{eidelman1998,
  author    = {Eidelman, Samuil D. and Zhitarashu, Nicolae V.},
  title     = {Parabolic Boundary Value Problems},
  series    = {Operator Theory: Advances and Applications},
  volume    = {101},
  publisher = {Birkh\"auser},
  address   = {Basel},
  year      = {1998}
}

@article{fendler2006,
  author  = {Fendler, Gero and Gr\"ochenig, Karlheinz and Leinert, Michael},
  title   = {Symmetry of weighted $L^1$-algebras and the {GRS}-condition},
  journal = {Bulletin of the London Mathematical Society},
  volume  = {38},
  number  = {4},
  pages   = {625--635},
  year    = {2006},
  doi     = {10.1112/S0024609306018777}
}

@article{feng2026,
  author  = {Feng, Ye and Lu, Jianfeng},
  title   = {Solution theory of {Hamilton--Jacobi--Bellman} equations in spectral {Barron} spaces},
  journal = {SIAM Journal on Mathematical Analysis},
  volume  = {58},
  number  = {1},
  pages   = {636--660},
  year    = {2026},
  note    = {arXiv:2503.18656},
  doi     = {10.1137/25M1745763}
}

@book{friedman1964,
  author    = {Friedman, Avner},
  title     = {Partial Differential Equations of Parabolic Type},
  publisher = {Prentice-Hall},
  address   = {Englewood Cliffs, NJ},
  year      = {1964}
}

@article{han2018,
  author  = {Han, Jiequn and Jentzen, Arnulf and E, Weinan},
  title   = {Solving high-dimensional partial differential equations using deep learning},
  journal = {Proceedings of the National Academy of Sciences},
  volume  = {115},
  number  = {34},
  pages   = {8505--8510},
  year    = {2018},
  doi     = {10.1073/pnas.1718942115}
}

@book{henry1981,
  author    = {Henry, Daniel},
  title     = {Geometric Theory of Semilinear Parabolic Equations},
  series    = {Lecture Notes in Mathematics},
  volume    = {840},
  publisher = {Springer},
  address   = {Berlin},
  year      = {1981}
}

@article{kerimkulov2020,
  author  = {Kerimkulov, Bekzhan and \v{S}i\v{s}ka, David and Szpruch, {\L}ukasz},
  title   = {Exponential convergence and stability of {Howard's} policy improvement algorithm for controlled diffusions},
  journal = {SIAM Journal on Control and Optimization},
  volume  = {58},
  number  = {3},
  pages   = {1314--1340},
  year    = {2020},
  doi     = {10.1137/19M1236758}
}

@article{levi1907,
  author  = {Levi, Eugenio Elia},
  title   = {Sulle equazioni lineari totalmente ellittiche alle derivate parziali},
  journal = {Rendiconti del Circolo Matematico di Palermo},
  volume  = {24},
  pages   = {275--317},
  year    = {1907},
  doi     = {10.1007/BF03015067}
}

@book{rudin1962,
  author    = {Rudin, Walter},
  title     = {Fourier Analysis on Groups},
  publisher = {Interscience Publishers},
  address   = {New York},
  year      = {1962}
}

@book{yong1999,
  author    = {Yong, Jiongmin and Zhou, Xun Yu},
  title     = {Stochastic Controls: Hamiltonian Systems and {HJB} Equations},
  publisher = {Springer},
  address   = {New York},
  year      = {1999}
}

@book{rudin1991,
  author    = {Rudin, Walter},
  title     = {Functional Analysis},
  edition   = {2nd},
  publisher = {McGraw-Hill},
  address   = {New York},
  year      = {1991}
}

@book{taylor2023,
  author    = {Taylor, Michael E.},
  title     = {Partial Differential Equations {II}: Qualitative Studies of Linear Equations},
  edition   = {3rd},
  series    = {Applied Mathematical Sciences},
  volume    = {116},
  publisher = {Springer},
  address   = {Cham},
  year      = {2023},
  doi       = {10.1007/978-3-031-33700-0}
}

@article{favorov2022,
  author = {S. Yu. Favorov},
  title = {Local versions of the {Wiener--L\'evy} theorem},
  journal = {Matematychni Studii},
  volume = {57},
  number = {1},
  pages = {45--52},
  year = {2022},
  doi = {10.30970/ms.57.1.45-52}
}

@article{raissi2019,
  author  = {Raissi, Maziar and Perdikaris, Paris and Karniadakis, George Em},
  title   = {Physics-informed neural networks: A deep learning framework for solving forward and inverse problems involving nonlinear partial differential equations},
  journal = {Journal of Computational Physics},
  volume  = {378},
  pages   = {686--707},
  year    = {2019},
  doi     = {10.1016/j.jcp.2018.10.045}
}

@misc{choulli2026heat,
  author = {Mourad Choulli and Shuai Lu and Hiroshi Takase},
  title = {Heat Equations in Spectral Barron Spaces},
  year = {2026},
  note = {arXiv preprint arXiv:2608.26657}
}

\end{document}